\pdfoutput=1
\documentclass[11pt,a4paper]{amsart}
\usepackage[margin=1in]{geometry}
\usepackage[utf8]{inputenc}
\usepackage[T1]{fontenc}
\usepackage[dvipsnames]{xcolor}
\usepackage{graphicx}
\usepackage{microtype}
\usepackage{fnpct}

\usepackage{amsmath}
\usepackage{amssymb}
\usepackage[mathscr]{eucal}
\usepackage{xfrac}
\usepackage{mathtools}
\usepackage{tikz-cd}
\renewcommand{\injlim}{\varinjlim}
\renewcommand{\projlim}{\varprojlim}

\usepackage{enumitem}
\usepackage{booktabs}
\usepackage[pdfusetitle,colorlinks]{hyperref}
\hypersetup{bookmarksdepth=2,pdfencoding=unicode,allcolors=MidnightBlue}
\usepackage[capitalise,noabbrev,nosort]{cleveref}

\crefname{equation}{}{}
\crefformat{enumi}{(#2#1#3)}
\crefmultiformat{enumi}{(#2#1#3)}{ and~(#2#1#3)}{, (#2#1#3)}{, and~(#2#1#3)}
\crefname{enumi}{}{}
\newlist{conenum}{enumerate}{1}
\setlist[conenum,1]{label=(\roman*),ref=\roman*}
\creflabelformat{conenumi}{(#2#1#3)}
\crefname{conenumi}{}{}

\numberwithin{equation}{section}
\theoremstyle{plain}
\newtheorem{Theorem}{Theorem}

\crefname{Theorem}{Theorem}{Theorems}
\newtheorem{theorem}[equation]{Theorem}
\newtheorem{proposition}[equation]{Proposition}
\newtheorem{lemma}[equation]{Lemma}
\newtheorem{corollary}[equation]{Corollary}

\theoremstyle{definition}
\newtheorem{definition}[equation]{Definition}
\newtheorem{example}[equation]{Example}
\newtheorem{question}[equation]{Question}

\theoremstyle{remark}
\newtheorem{remark}[equation]{Remark}

\let\oldSS\SS\let\SS\relax

\newcommand{\CC}{\mathbb{C}}
\newcommand{\SS}{\mathbb{S}}

\newcommand{\E}{\mathrm{E}}

\newcommand{\h}{\textnormal{h}}
\newcommand{\fin}{\textnormal{fin}}
\newcommand{\idem}{\textnormal{idem}}
\newcommand{\sm}{\textnormal{sm}}

\newcommand{\Alg}{\operatorname{Alg}}
\newcommand{\CAlg}{\operatorname{CAlg}}
\newcommand{\Down}{\operatorname{Down}}
\newcommand{\D}{\operatorname{D}}
\newcommand{\End}{\operatorname{End}}
\newcommand{\Fun}{\operatorname{Fun}}
\newcommand{\Idem}{\operatorname{Idem}}
\newcommand{\Idl}{\operatorname{Idl}}
\newcommand{\Ind}{\operatorname{Ind}}
\newcommand{\KU}{\operatorname{KU}}
\newcommand{\Loc}{\operatorname{Loc}}
\newcommand{\Map}{\operatorname{Map}}

\newcommand{\PShv}{\operatorname{PShv}}

\newcommand{\Shv}{\operatorname{Shv}}
\newcommand{\Sm}{\operatorname{Sm}}
\newcommand{\Sub}{\textnormal{Sub}}
\newcommand{\st}{\textnormal{st}}
\newcommand{\Spec}{\operatorname{Spec}}
\newcommand{\cCAlg}{\operatorname{cCAlg}}
\newcommand{\cIdem}{\operatorname{cIdem}}
\newcommand{\cLoc}{\operatorname{cLoc}}
\newcommand{\cofib}{\operatorname{cofib}}
\newcommand{\fib}{\operatorname{fib}}
\newcommand{\id}{\operatorname{id}}
\newcommand{\op}{\operatorname{op}}

\newcommand{\X}{\mathord{-}}
\newcommand{\unit}{\mathbf{1}}

\newcommand{\cat}[1]{\mathscr{#1}}
\newcommand{\cls}[1]{\mathcal{#1}}
\newcommand{\Cat}[1]{\mathsf{#1}}
\newcommand{\shf}[1]{\mathcal{#1}}

\title{The sheaves--spectrum adjunction}
\author{Ko Aoki}
\address{Max Planck Institute for Mathematics,
  Vivatsgasse 7, 53111 Bonn, Germany
}
\email{aoki@mpim-bonn.mpg.de}
\date{\today}

\begin{document}

\begin{abstract}
  This paper demystifies the notion of the smashing spectrum
  of a stable presentably symmetric monoidal \(\infty\)-category,
  defined as a locale whose opens
  correspond to smashing localizations.
  Previously, this concept was studied
  in tensor-triangular geometry in
  the compactly generated rigid setting.
  Our main result identifies the smashing spectrum functor
  as the right adjoint to the spectral sheaves functor,
  providing in particular an external characterization
  that avoids explicit reference to objects, ideals, or localizations.
  The sheaves–spectrum adjunction formalizes the intuition
  that the smashing spectrum constitutes the best approximation
  of a given \(\infty\)-category by \(\infty\)-categories of sheaves.
  We establish an unstable generalization of this result
  by identifying the correct unstable analog of the
  smashing spectrum,
  which parametrizes smashing colocalizations instead.

  As an application,
  we give a categorical presentation
  of Clausen–Scholze’s categorified locales.
\end{abstract}

\maketitle
\setcounter{tocdepth}{1}
\tableofcontents

\section{Introduction}\label{s:intro}

\subsection{Smashing spectrum and categorified locales}\label{ss:q}

For a tensor triangulated category~\(\cat{D}\),
the notion of an \emph{idempotent algebra},
i.e., a (commutative) algebra object~\(E\)
whose multiplication morphism \(E\otimes E\to E\) is an isomorphism,
is well known,
as seen in~\cite[Exercise~4.2]{KashiwaraSchapira06}.
It was systematically studied
by Boyarchenko–Drinfeld~\cite{BoyarchenkoDrinfeld14}
and
Balmer–Favi~\cite{BalmerFavi11},
who observed that
such an algebra corresponds to a \emph{smashing localization},
i.e., a localization~\(L\colon\cat{D}\to\cat{D}\)
that is equivalent to \(L(\unit)\otimes\X\).
Balmer–Favi studied the poset of idempotent algebras
within \emph{tensor-triangular geometry},
a subject that studies tensor triangulated categories
by associating spaces to them.
Subsequently,
under the assumption
that \(\cat{D}\) is compactly generated and rigid,
Balmer–Krause–Stevenson~\cite{BalmerKrauseStevenson20}
proved that the poset of idempotent algebras is a \emph{frame};
i.e., it shares
the lattice-theoretic properties
of the poset of open subsets of a topological space.
The \emph{smashing spectrum} is the corresponding “topological space”,
which is formally called a \emph{locale}.

Meanwhile,
Clausen and Scholze launched \emph{condensed mathematics}
(cf.~\cite{Condensed}),
a framework for handling
algebraic structures
in the presence of topology.
Their aim was to recast analytic geometry
in algebraic terms.
They delivered a course (see~\cite{Complex})
and proved standard theorems in complex-analytic geometry
without invoking such “low-level” notions as smooth functions.
Their major tool is the notion of a liquid vector space (cf.~\cite{Analytic}),
but
their framework is also unique in that their model of structured spaces
is no longer locally ringed spaces:
They first noted that
for a cocomplete closed stable symmetric monoidal \(\infty\)-category,
the poset of idempotent algebras is readily seen to be a frame.
Hence we have the smashing spectrum\footnote{It is not necessarily spatial
  in this generality,
  as proven in~\cite{ttg-shv}.
  Hence this use of the term “spectrum”
  does not align with the convention taken in
  tensor triangulated geometry.
} functor
\begin{equation}
  \label{e:sm}
  \phantom,
  {\Sm}
  \colon
  \{\text{cocomplete closed stable symmetric monoidal \(\infty\)-categories}\}
  \to
  {\widehat{\Cat{Loc}}}^{\op}
  ,
\end{equation}
where \(\widehat{\Cat{Loc}}\) denotes the category of large locales,
i.e., the opposite of the category of large frames.
Then
they introduced the notion of a \emph{categorified locale},
which is a triple consisting of
a locale~\(X\),
a cocomplete closed stable symmetric monoidal \(\infty\)-category~\(\cat{C}\),
and
a morphism \(\Sm(\cat{C})\to X\)
of large locales.
Their (suitably generalized) complex-analytic spaces
are defined using the language of categorified locales
(see~\cite[Definition~11.14]{Complex}).
This machinery has proven highly effective;
e.g.,
they were able to prove
the GAGA theorem
for proper morphisms without using Chow’s lemma
(see~\cite[Proposition~13.8]{Complex}).

\begin{remark}\label{71087630bd}
  For their condensed approach to analytic geometry,
  one must allow
  stable presentably symmetric monoidal \(\infty\)-categories that are not rigid;
  e.g.,
  the derived \(\infty\)-category
  of liquid \(\CC\)-vector spaces \(\Cat{Liq}_{\CC}\),
  i.e., liquid quasicoherent modules of
  the one-point complex-analytic space,
  is not rigid.
\end{remark}

Having traced the origins and advantages of the smashing spectrum construction,
we now face the following question:

\begin{question}\label{2d14480999}
  Is there a categorical origin of the smashing spectrum functor~\(\Sm\)
  or of the concept of a categorified locale?
\end{question}

This paper aims to answer this question.

\subsection{Results}\label{ss:main}

We prove the following,
which gives a concise characterization of~\(\Sm\):

\begin{Theorem}\label{main_sp}
  The spectral sheaves functor
  \begin{equation}
    \label{e:shv_sp}
    (\Shv(\X;\Cat{Sp}),{\otimes})\colon
    \Cat{Loc}^{\op}\to\CAlg(\Cat{Pr}_{\st})
  \end{equation}
  from the opposite of the category of locales
  to the \(\infty\)-category of
  stable presentably symmetric monoidal \(\infty\)-categories
  has a right adjoint, which is given by restricting
  the smashing spectrum functor~\cref{e:sm}.
\end{Theorem}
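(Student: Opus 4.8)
The plan is to prove the theorem by producing, naturally in the locale $X\in\Cat{Loc}^{\op}$ and in $\cat{C}\in\CAlg(\Cat{Pr}_{\st})$, an equivalence of mapping spaces
\[
  \Map_{\CAlg(\Cat{Pr}_{\st})}(\Shv(X;\Cat{Sp}),\cat{C})\simeq\Map_{\Cat{Loc}^{\op}}(X,\Sm(\cat{C})),
\]
whose right-hand side is, by the definition of $\Sm$, the set of frame homomorphisms $\mathcal{O}(X)\to\Idem(\cat{C})$ from the frame of opens of $X$ to the frame of idempotent algebras of $\cat{C}$. Such a family of equivalences is precisely the data of the claimed adjunction $\Shv(\X;\Cat{Sp})\dashv\Sm$, so establishing it settles both the existence of the right adjoint and its identification simultaneously. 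The first move is to pass to the level of spaces: since stabilization $\X\otimes\Cat{Sp}\colon\CAlg(\Cat{Pr}^{L})\to\CAlg(\Cat{Pr}_{\st})$ is left adjoint to the inclusion $\CAlg(\Cat{Pr}_{\st})\hookrightarrow\CAlg(\Cat{Pr}^{L})$ and $\Shv(X;\Cat{Sp})\simeq\Shv(X;\cls{S})\otimes\Cat{Sp}$, the left-hand side is equivalent to $\Map_{\CAlg(\Cat{Pr}^{L})}(\Shv(X;\cls{S}),\cat{C})$, using that $\cat{C}$ is already stable.

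The heart of the matter is a universal property of the localic topos $\Shv(X;\cls{S})$ equipped with its cartesian symmetric monoidal structure. I would realize it as a symmetric monoidal left-exact localization of the Day-convolution presheaf category $\PShv(\mathcal{O}(X))$, where the Day structure is the one induced by the meet operation $\wedge$ on the frame $\mathcal{O}(X)$; this Day structure agrees with the cartesian product on $\PShv(\mathcal{O}(X))$, and sheafification is left exact, hence symmetric monoidal. By the universal property of Day convolution, $\Map_{\CAlg(\Cat{Pr}^{L})}(\PShv(\mathcal{O}(X)),\cat{C})$ is the space of strong symmetric monoidal functors $(\mathcal{O}(X),\wedge)\to(\cat{C},\otimes)$. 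The decisive point is that every open $U$ determines a subterminal object $\mathbf{1}_{U}\in\Shv(X;\cls{S})$ with $\mathbf{1}_{U}\times\mathbf{1}_{U}\simeq\mathbf{1}_{U}$, so any such monoidal functor sends it to an $\otimes$-idempotent algebra of $\cat{C}$; as idempotent algebras span a poset inside $\CAlg(\cat{C})$, this space of monoidal functors is discrete and is identified with the set of top- and meet-preserving maps $\mathcal{O}(X)\to\Idem(\cat{C})$.

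It then remains to impose the sheaf condition and to recognize it as preservation of joins. A cocontinuous symmetric monoidal functor out of $\PShv(\mathcal{O}(X))$ factors through $\Shv(X;\cls{S})$ exactly when it inverts the covering maps; writing $\phi(U)\in\Idem(\cat{C})$ for the idempotent attached to $U$, this asks that for every cover $\bigvee_{i}U_{i}=U$ the canonical map $\operatorname{colim}_{[n]\in\Delta^{\op}}\bigoplus_{i_{0},\dots,i_{n}}\phi(U_{i_{0}})\otimes\dots\otimes\phi(U_{i_{n}})\to\phi(U)$ be an equivalence. I expect the main obstacle to be the lemma that, for a family of idempotent algebras, this \v{C}ech colimit computes exactly their join $\bigvee_{i}\phi(U_{i})$ in the frame $\Idem(\cat{C})$; granting it, the sheaf condition becomes precisely the assertion that $U\mapsto\phi(U)$ preserves arbitrary joins, promoting the meet-semilattice map to a genuine frame homomorphism $\mathcal{O}(X)\to\Idem(\cat{C})$. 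Proving this lemma requires controlling how colimits in $\Idem(\cat{C})$ compare with colimits in $\cat{C}$; the key input is that tensoring with an idempotent is a smashing localization, so the \v{C}ech colimit of idempotents is again idempotent, receives each $\phi(U_{i})$, and is seen to enjoy the universal property of the join. Assembling all of these identifications naturally in $X$ and $\cat{C}$ produces the sought adjunction with right adjoint $\Sm$.
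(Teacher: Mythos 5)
Your reduction steps coincide with the paper's: the passage from \(\CAlg(\Cat{Pr}_{\st})\) to \(\CAlg(\Cat{Pr})\) via the stabilization adjunction, the presentation of \(\Shv(X)\) as a symmetric monoidal left-exact localization of \(\PShv(F)\) (\(F\) the frame of opens) with Day convolution equal to the cartesian structure, the universal property of Day convolution, and the identification of the discrete space of symmetric monoidal functors \((F,\wedge)\to(\cat{C},\otimes)\) with semilattice maps into the idempotents are precisely \cref{day}, \cref{adj}, and the first half of the proof of \cref{main_2}. (The paper proves the unstable \cref{main} for arbitrary \(\cat{C}\in\CAlg(\Cat{Pr})\) and deduces \cref{main_sp}; you prove the stable case directly, which is legitimate.) The genuine divergence is the final step: the paper converts the sheaf condition into join-preservation by proving a characterization of sheaves on a locale (\cref{ee1f8d640f}: reducedness, binary excision, directed limits) and matching it against its explicit construction of joins of idempotents as pushouts and directed colimits (\cref{49a2f304a6}), whereas you phrase the sheaf condition via \v{C}ech nerves and rest everything on the lemma that the \v{C}ech colimit of idempotents computes their join.

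That lemma is exactly where the difficulty of the theorem is concentrated, and your one-sentence justification is not a proof. Writing \(T=\operatorname{colim}_{[n]\in\Delta^{\op}}\bigoplus_{i_0,\dots,i_n}\phi(U_{i_0})\otimes\dots\otimes\phi(U_{i_n})\), you must show (a) that \(T\), with its canonical map to \(\unit\), is again idempotent, and (b) that it is the least upper bound of the \(\phi(U_i)\). The phrase ``tensoring with an idempotent is a smashing localization'' does not yield (a); an actual argument is, e.g., that for each \(i\) the augmented simplicial object \(\phi(U_i)\otimes(\text{\v{C}ech object})\to\phi(U_i)\) acquires an extra degeneracy by inserting the index \(i\) and using \(\phi(U_i)\otimes\phi(U_i)\simeq\phi(U_i)\), whence \(\phi(U_i)\otimes T\to\phi(U_i)\) is an equivalence, and then \(T\otimes T\simeq T\) follows by commuting \(\otimes\) past the realization termwise; point (b) then follows because each summand of the diagram is an idempotent over \(\unit\), so its mapping space into any upper bound is contractible. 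Alternatively one can argue in \(\PShv(F)\), where the \v{C}ech colimit is the sieve generated by the \(U_i\), a directed colimit of iterated pushouts along meets, and push forward along your cocontinuous monoidal functor; but then one must know that such pushouts and directed colimits of idempotents compute joins, which is the content of \cref{49a2f304a6}. Either way, this material has to be supplied; it cannot be waved through.

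Two further corrections. First, a symmetric monoidal functor \((F,\wedge)\to(\cat{C},\otimes)\) does \emph{not} send an open \(U\) to an idempotent \emph{algebra}: \(U\) is not an algebra object of \((F,\wedge)\) at all, since a unit map would mean \(\top\leq U\), i.e., \(U=\top\). What every \(U\) carries (uniquely) is a cocommutative coalgebra structure, so \(\phi(U)\) is an idempotent \emph{coalgebra} \(\phi(U)\to\unit\) with \(\phi(U)\simeq\phi(U)\otimes\phi(U)\). Your identification with frame maps into \(\Idem(\cat{C})\) therefore needs the canonical poset isomorphism \(\Idem(\cat{C})\simeq\cIdem(\cat{C})\) valid for stable \(\cat{C}\) (\cref{bb24a428eb}); without stability it fails, which is exactly why \cref{main} is stated with coalgebras, and silently conflating the two obscures the paper's central point. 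Second, for the right adjoint to take values in \(\Cat{Loc}\) (small locales) you must verify that \(\Idem(\cat{C})\) is essentially small for presentable \(\cat{C}\); the paper proves this in \cref{small}, and in the stable case it can also be extracted from known smallness results for localizations (cf.\ \cref{e9e6e9cbb3}), but your proposal omits the issue entirely.
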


\begin{remark}\label{8875b12b73}
  In \cref{main_sp}, we impose the presentability condition
  in order to ensure that
  \(\Sm\) takes small values.
  This is not a real limitation
  when using categorified locales in analytic geometry,
  since one may work within
  \(\kappa\)-condensed mathematics
  for a fixed cutoff~\(\kappa\).
\end{remark}

We prove a more fundamental variant of this theorem.
First,
note that \cref{e:shv_sp} is the composite
of the (space-valued) sheaves functor
\begin{equation}
  \label{e:shv}
  ({\Shv(\X)},{\times})
  \colon\Cat{Loc}^{\op}
  \to
  \CAlg(\Cat{Pr})
\end{equation}
and the stabilization functor \(\Cat{Sp}\otimes\X\).
Our main result in this paper is
the following:

\begin{Theorem}\label{main}
  The sheaves functor \cref{e:shv} has a right adjoint.
  It is pointwise given by assigning
  to a presentably symmetric monoidal \(\infty\)-category
  the locale
  whose opens are
  (equivalence classes of) idempotent coalgebras,
  or equivalently, smashing colocalizations.
\end{Theorem}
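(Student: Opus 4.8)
The plan is to prove existence of the right adjoint by exhibiting, for each object \(\cat{C}\in\CAlg(\Cat{Pr})\), a representing object for the functor \(X\mapsto\Map_{\CAlg(\Cat{Pr})}(\Shv(X),\cat{C})\); pointwise representability then assembles the representing objects into a right adjoint \(G\), and the computation simultaneously identifies \(G(\cat{C})\) with the locale \(\cLoc(\cat{C})\) whose frame of opens is \(\cIdem(\cat{C})\). Concretely, writing \(\mathcal{O}(X)\) for the frame of opens of the locale \(X\), I will produce an equivalence natural in \(X\) (and \(\cat{C}\))
\[
  \Map_{\CAlg(\Cat{Pr})}(\Shv(X),\cat{C})
  \simeq
  \Map_{\Cat{Frm}}(\mathcal{O}(X),\cIdem(\cat{C})),
\]
and then observe that a frame homomorphism \(\mathcal{O}(X)\to\cIdem(\cat{C})\) is exactly a morphism of locales \(\cLoc(\cat{C})\to X\), i.e.\ a morphism \(X\to\cLoc(\cat{C})\) in \(\Cat{Loc}^{\op}\); this is the universal arrow exhibiting \(\cLoc(\cat{C})\) as the value of \(G\) at \(\cat{C}\).

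First I would reduce from sheaves to presheaves. Since products in \(\PShv(\mathcal{O}(X))\) are computed pointwise, the Yoneda embedding identifies the cartesian structure on \(\PShv(\mathcal{O}(X))\) with the Day convolution of the cartesian (finite-meet) structure on the poset \(\mathcal{O}(X)\): both are cocontinuous in each variable and agree on representables. The universal property of Day convolution then gives
\[
  \Map_{\CAlg(\Cat{Pr})}(\PShv(\mathcal{O}(X)),\cat{C})
  \simeq
  \Fun^{\otimes}((\mathcal{O}(X),\wedge),\cat{C}).
\]
Next I would unwind the right-hand side. Because \((\mathcal{O}(X),\wedge)\) is cartesian monoidal, a strong symmetric monoidal functor to \(\cat{C}\) carries every object to a cocommutative coalgebra, and since each open is idempotent (\(U\wedge U=U\)) its image is an idempotent coalgebra; conversely the tensor product of idempotent coalgebras is their meet and the unit is the top idempotent coalgebra. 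As \(\cIdem(\cat{C})\) is a poset, all higher coherence is contractible, so this space of functors is discrete and equals the set of finite-meet-preserving monotone maps \(\mathcal{O}(X)\to\cIdem(\cat{C})\).

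It remains to descend from presheaves to sheaves. The sheafification \(\PShv(\mathcal{O}(X))\to\Shv(X)\) is a monoidal (smashing) localization, so a symmetric monoidal cocontinuous functor out of \(\PShv(\mathcal{O}(X))\) factors through \(\Shv(X)\) exactly when it inverts the covering sieves. Translating this across the previous identification, I expect it to say precisely that the associated meet-semilattice map \(\mathcal{O}(X)\to\cIdem(\cat{C})\) preserves arbitrary joins, i.e.\ is a frame homomorphism, using that \(\cIdem(\cat{C})\) is a frame. Granting this, the displayed equivalence follows and the theorem is proved.

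The main obstacle I anticipate is this last step: verifying that inverting covering sieves corresponds exactly to join preservation. This requires knowing that the monoidal localization is compatible with the generating opens and that the frame join of a family of idempotent coalgebras is computed by the very colimit whose inversion the sheaf condition enforces; managing the interaction between the reflective localization and the symmetric monoidal structure—so that \emph{monoidal} cocontinuous functors, not merely cocontinuous ones, descend—is the delicate point. I would also need, as an input, the frame structure on \(\cIdem(\cat{C})\), the coalgebra (colocalization) dual of the frame of idempotent algebras recalled in \cref{ss:q}, which I take to be established beforehand.
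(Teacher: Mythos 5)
Your proposal follows the same architecture as the paper's own proof: reduce along Day convolution to symmetric monoidal cocontinuous functors out of \(\PShv(F)\) (the paper's \cref{day}), identify symmetric monoidal functors out of the poset \(\iota(F)\) with semilattice morphisms \(F\to\cIdem(\cat{C})\) (the paper's \cref{adj}), and then ask which of these descend to \(\Shv(F)\). The problem is the step you defer with ``I expect it to say precisely that\dots'': the equivalence between inverting covering sieves and preservation of arbitrary joins is not a translation exercise but the technical heart of the whole proof, and nothing in your outline supplies it, so as written this is a genuine gap.

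Concretely, a frame morphism \(f\colon F\to\cIdem(\cat{C})\) gives you control over exactly three kinds of colimits in \(\cat{C}\), because that is how joins in \(\cIdem(\cat{C})\) are constructed in \cref{49a2f304a6}: the empty join is the initial object, the binary join is the pushout over the meet, and directed joins are directed colimits. Factoring through sheafification, however, requires \(\injlim_{U\in R}f(U)\to f\bigl(\bigvee R\bigr)\) to be an equivalence in \(\cat{C}\) for \emph{every} downward-closed \(R\subset F\), a colimit over an arbitrary sieve poset, and there is no a priori reason these two families of colimits match. Bridging them is the paper's \cref{ee1f8d640f} (applied to \(\cat{C}^{\op}\)-valued presheaves): a presheaf on a locale is a sheaf as soon as it satisfies the empty-cover, binary-cover, and directed-cover conditions. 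Its proof is genuinely nontrivial, involving a cofinality argument for binary covers (\cref{exc_two}), a reduction of arbitrary sieves to filtered colimits of finitely generated ones, and an induction on the number of generators using \cite[Proposition~4.4.2.2]{LurieHTT}. By contrast, the point you single out as delicate --- that \emph{monoidal} cocontinuous functors, not merely cocontinuous ones, descend --- is the easy part: sheafification preserves finite products, so it is a monoidal localization in the sense of \cite[Proposition~2.2.1.9]{LurieHA}, and \cite[Proposition~4.1.7.4]{LurieHA} reduces monoidal factorization to factorization of the underlying functor. Two smaller inputs also deserve flagging: the identification of symmetric monoidal functors \(\iota(F)\to\cat{C}\) with semilattice maps needs the coreflection onto symmetric monoidal \(\infty\)-categories with final unit (\cref{2ed3114c1a}), since the unit of \(\cat{C}\) is not final in general; and the frame structure and smallness of \(\cIdem(\cat{C})\) that you take as given are not formally dual to the stable idempotent-algebra statement recalled in \cref{ss:q} (tensoring does not preserve limits), which is why the paper proves them separately for coidempotents in \cref{49a2f304a6} and \cref{small}.
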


\begin{remark}\label{25cca44437}
  The lower semilattice of
  idempotent coalgebras
  of a general symmetric monoidal \(\infty\)-category
  has a similar characterization;
  see \cref{adj}.
  We use this fact in the proof of \cref{main}.
\end{remark}

\begin{definition}\label{1291cbf8a8}
  We call the right adjoint in \cref{main}
  the \emph{unstable smashing spectrum} functor
  and denote it by the same symbol~\(\Sm\).
\end{definition}

Conversely,
\cref{main} yields
an alternative definition of \(\Shv\).

The relation between \cref{main_sp,main}
is diagrammatically summarized as
\begin{equation*}
  \begin{tikzcd}[column sep=large]
    \Cat{Loc}^{\op}\ar[r,shift left,"\Shv"]
    \ar[rr,bend left=25,"\Shv(\X;\Cat{Sp})"]&
    \CAlg(\Cat{Pr})\ar[r,shift left,"\Cat{Sp}\otimes\X"]\ar[l,shift left,"\Sm"]&
    \CAlg(\Cat{Pr}_{\st})\rlap.\ar[l,shift left,"\textnormal{forget}"]
    \ar[ll,bend left=25,"\Sm"']
  \end{tikzcd}
\end{equation*}
The key difference from the stable case
is that we must work with idempotent coalgebras, not algebras.

\begin{remark}\label{663d806c7e}
  There is a known attempt to obtain a locale
  (or locale-like structure)
  from a symmetric monoidal category of unstable nature.
  For example,
  Enrique~Moliner–Heunen–Tull~\cite{EnriqueMolinerHeunenTull20}
  considered coidempotent objects
  whose structure morphism is a monomorphism.
  However, in the stable case,
  this approach collapses,
  since
  every monomorphisms in
  a stable \(\infty\)-category
  is an equivalence.
\end{remark}

An immediate corollary of
\cref{main}
is that
the functor
\({\Shv}\colon\Cat{Loc}^{\op}\to\CAlg(\Cat{Pr})\)
preserves colimits.
For example, we have the following
categorical Künneth formula:

\begin{corollary}\label{760c2c8226}
  For a diagram \(X'\to X\gets Y\) of locales,
  there is a canonical equivalence
  \begin{equation*}
    \Shv(X'\times_{X}Y)
    \simeq
    \Shv(X')\otimes_{\Shv(X)}\Shv(Y)
  \end{equation*}
  in \(\CAlg(\Cat{Pr})\).
\end{corollary}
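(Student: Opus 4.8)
The plan is to derive this directly from \cref{main}. Since \cref{main} asserts that the sheaves functor \(\Shv\colon\Cat{Loc}^{\op}\to\CAlg(\Cat{Pr})\) admits a right adjoint, namely the unstable smashing spectrum \(\Sm\), it preserves all colimits that exist in \(\Cat{Loc}^{\op}\). The corollary is then a formal consequence of this cocontinuity.

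The first step is to recognize the left-hand side as a colimit after passing to opposites. A diagram \(X'\to X\gets Y\) in \(\Cat{Loc}\) is a cospan, and its fiber product \(X'\times_XY\) is the corresponding limit; but the very same diagram, read in \(\Cat{Loc}^{\op}\), is a span \(X'\gets X\to Y\), and the limit in \(\Cat{Loc}\) is precisely the pushout \(X'\sqcup_XY\) of this span in \(\Cat{Loc}^{\op}\). Applying \(\Shv\) and using that it preserves pushouts yields a canonical equivalence \(\Shv(X'\times_XY)\simeq\Shv(X')\sqcup_{\Shv(X)}\Shv(Y)\) in \(\CAlg(\Cat{Pr})\).

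The second step is to identify this pushout with the relative tensor product. In \(\CAlg(\cat{M})\) for any presentably symmetric monoidal \(\infty\)-category \(\cat{M}\), the pushout of two commutative algebra objects along a common algebra is computed by the relative tensor product; taking \(\cat{M}=\Cat{Pr}\) with its standard tensor product, this gives \(\Shv(X')\sqcup_{\Shv(X)}\Shv(Y)\simeq\Shv(X')\otimes_{\Shv(X)}\Shv(Y)\), which completes the argument. I expect no substantive obstacle here, as the entire content is carried by \cref{main}; the two remaining observations — that opposite-category limits become colimits and that pushouts of commutative algebras are relative tensor products — are formal. The only point deserving a moment's care is to confirm that the fiber product \(X'\times_XY\) genuinely exists in \(\Cat{Loc}\), so that the left-hand side is well defined and is indeed the pushout transported across \(\Cat{Loc}\simeq(\Cat{Loc}^{\op})^{\op}\); this follows from the (co)completeness of \(\Cat{Loc}\).
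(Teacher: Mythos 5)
Your proposal is correct and follows essentially the same route as the paper, which states the corollary as an immediate consequence of \cref{main}: the left adjoint \(\Shv\) preserves colimits, localic fiber products are pushouts in \(\Cat{Loc}^{\op}\), and pushouts in \(\CAlg(\Cat{Pr})\) are computed by relative tensor products. The only cosmetic point is that \(\Cat{Pr}\) is not itself a \emph{presentably} symmetric monoidal \(\infty\)-category (it is large), but the identification of pushouts of commutative algebras with relative tensor products still applies to it, since its tensor product is compatible with small colimits in each variable.
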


\begin{remark}\label{8c3de052e9}
Note that an \(\infty\)- (or even \(1\)-)toposic analog
  of \cref{760c2c8226} dose not hold;
  i.e.,
  a pullback of \(\infty\)-toposes
  cannot be computed as
  a pushout in \(\CAlg(\Cat{Pr})\) in general.
  This may be viewed as an obstruction
  for extending our sheaves–spectrum adjunction
  to the \(\infty\)- (or even \(1\)-)toposic situation.
\end{remark}

Returning to the topic of categorified locales,
\cref{main} says that
a map \(\Sm(\cat{C})\to X\),
which is part of the datum defining a categorified locale \((X,\cat{C})\),
just corresponds to a symmetric monoidal colimit-preserving functor
\(\Shv(X)\to\cat{C}\).
By combining the fully faithfulness of \cref{e:shv},
we can deduce the following
Tannaka duality for categorified locales:

\begin{Theorem}\label{pt}
  Let \(\Cat{CatLoc}_{\st}\)
  denote the \(\infty\)-category of
  stable presentable categorified locales
  (see \cref{catloc} for a precise definition).
  There is a fully faithful functor
  \begin{equation*}
    (\Cat{CatLoc}_{\st})^{\op}\to\Fun(\Delta^1,\CAlg(\Cat{Pr}))
  \end{equation*}
  such that
  an object
  \(\cat{X}\to\cat{C}\)
  of \(\Fun(\Delta^1,\CAlg(\Cat{Pr}))\)
  is in its essential image
  if and only if
  \(\cat{X}\) is equivalent to \(\Shv(X)\)
  for a (unique) locale~\(X\)
  and \(\cat{C}\) is stable.
\end{Theorem}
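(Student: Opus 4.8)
The plan is to realize the asserted functor as the second projection out of a comma $\infty$-category, and then to read off both its fully faithfulness and the description of its essential image from the adjunction of \cref{main} together with the fully faithfulness of the sheaves functor~\eqref{e:shv}. First I would unwind the definition of $\Cat{CatLoc}_{\st}$ from \cref{catloc}: an object is a locale $X$, a presentable stable $\cat{C}\in\CAlg(\Cat{Pr}_{\st})$, and a map $\Sm(\cat{C})\to X$ of locales, where, as the diagram relating \cref{main_sp,main} shows, the (stable) smashing spectrum of $\cat{C}$ is computed by the unstable right adjoint $\Sm$ of \cref{main} applied to $\cat{C}$ (the stable sheaves functor factors as $\Cat{Sp}\otimes\X$ after $\Shv$, so its right adjoint is $\Sm$ precomposed with the forgetful functor). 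Applying the adjunction $\Shv\dashv\Sm$ pointwise turns the datum of a map $\Sm(\cat{C})\to X$ in $\Cat{Loc}$, i.e.\ a map $X\to\Sm(\cat{C})$ in $\Cat{Loc}^{\op}$, into a map $\Shv(X)\to\cat{C}$ in $\CAlg(\Cat{Pr})$. Tracking this through morphisms and matching variances, I would identify $(\Cat{CatLoc}_{\st})^{\op}$ with the comma $\infty$-category $\cat{E}_{\st}$ of triples $(X,\cat{C},\Shv(X)\to\cat{C})$ with $\cat{C}$ stable; since stability is a property, $\cat{E}_{\st}$ is the full subcategory on stable-codomain objects of the pullback $\cat{E}=\Cat{Loc}^{\op}\times_{\CAlg(\Cat{Pr})}\Fun(\Delta^1,\CAlg(\Cat{Pr}))$ formed along $\Shv$ and evaluation $\mathrm{ev}_0$ at the source. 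The functor of the theorem is then the projection $\pi\colon\cat{E}_{\st}\to\Fun(\Delta^1,\CAlg(\Cat{Pr}))$, $(X,\cat{C},\Shv(X)\to\cat{C})\mapsto(\Shv(X)\to\cat{C})$.

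Next I would establish fully faithfulness by a general comma-category computation. For the full pullback $\cat{E}$ and objects $P=(X,\cat{C},u)$, $Q=(X',\cat{C}',u')$, the mapping space is the fiber product $\Map_{\Cat{Loc}^{\op}}(X,X')\times_{\Map_{\CAlg(\Cat{Pr})}(\Shv(X),\Shv(X'))}\Map_{\Fun(\Delta^1,\CAlg(\Cat{Pr}))}(u,u')$, and $\pi$ induces on mapping spaces the projection onto the last factor. This projection is the base change of $\Shv\colon\Map_{\Cat{Loc}^{\op}}(X,X')\to\Map_{\CAlg(\Cat{Pr})}(\Shv(X),\Shv(X'))$ along $\mathrm{ev}_0$; since $\Shv$ is fully faithful that map is an equivalence, hence so is its base change. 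Thus $\pi$ is fully faithful on $\cat{E}$, and a fortiori on the full subcategory $\cat{E}_{\st}$; transporting along the identification of the previous paragraph yields fully faithfulness of the functor in the statement.

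Finally I would pin down the essential image. An arrow $\cat{X}\to\cat{C}$ lies in the image of $\pi$ on $\cat{E}$ if and only if $\cat{X}$ lies in the essential image of $\Shv$: given an equivalence $\cat{X}\simeq\Shv(X)$ we transport $\cat{X}\to\cat{C}$ to an arrow $\Shv(X)\to\cat{C}$, which is by construction an object of $\cat{E}$, and conversely every object of $\cat{E}$ has domain of the form $\Shv(X)$. Restricting the source to $\cat{E}_{\st}$ imposes exactly the further condition that $\cat{C}$ be stable, so the essential image consists precisely of the arrows $\cat{X}\to\cat{C}$ with $\cat{X}\simeq\Shv(X)$ and $\cat{C}$ stable. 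The locale $X$ with $\Shv(X)\simeq\cat{X}$ is unique because $\Shv$, being fully faithful, is injective on equivalence classes of objects; this gives the parenthetical uniqueness and matches the stated characterization.

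The hard part will not be the mathematics but the bookkeeping: making the identification $(\Cat{CatLoc}_{\st})^{\op}\simeq\cat{E}_{\st}$ genuinely functorial at the $\infty$-categorical level — that is, upgrading the pointwise adjunction equivalence $\Map_{\CAlg(\Cat{Pr})}(\Shv(X),\cat{C})\simeq\Map_{\Cat{Loc}}(\Sm(\cat{C}),X)$ to an equivalence of comma $\infty$-categories compatible with the two projections — and fixing all the variances so that the $\op$ on the source of the functor comes out correctly. Once this comma-category model is in place, both fully faithfulness and the essential-image description are formal consequences of the adjunction of \cref{main} and the fully faithfulness of~\eqref{e:shv}.
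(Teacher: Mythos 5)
Your proposal is correct and takes essentially the same route as the paper: its proof of \cref{dfb9195053} likewise derives the functor from the adjunction of \cref{main} plus the fully faithfulness of \cref{e:shv}, and the ``bookkeeping'' you flag as the hard part is precisely what that proof dispatches by standard fibration yoga --- by \cref{main} the cartesian fibration \(\cat{M}\) defining \(\Cat{CatLoc}\) in \cref{catloc} is also the cocartesian fibration classified by \(\Shv\), and its \(\infty\)-category of sections is exactly your comma category \(\cat{E}\). The only cosmetic difference is that the paper obtains fully faithfulness by unstraightening the square exhibiting \(\Shv\to\id_{\CAlg(\Cat{Pr})}\) into a fully faithful map of cocartesian fibrations \(\cat{M}\to\Delta^1\times\CAlg(\Cat{Pr})\) and passing to sections, rather than by your (equivalent) base-change computation of mapping spaces in the pullback.
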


Finally,
we point out another application of our adjunction.
The smashing spectrum of
\(\Shv(\cat{X};\D(k))\) for an arbitrary \(\infty\)-topos~\(\cat{X}\)
and a field~\(k\)
is determined in~\cite[Theorem~A]{ttg-shv}.
A special case is the following:

\begin{theorem}[\cite{ttg-shv}]\label{sm_shv_k}
  For a locale~\(X\) and a field~\(k\),
  the canonical map \(\Sm(\Shv(X;\D(k)))\to X\)
  of locales is an isomorphism.
\end{theorem}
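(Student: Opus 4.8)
The plan is to reduce the statement to a classification of idempotent coalgebras and to identify the canonical map with one induced by $k$-linearization. Recall from \cref{main}, applied through the forgetful functor $\CAlg(\Cat{Pr}_{\st})\to\CAlg(\Cat{Pr})$ as in \cref{main_sp}, that $\Sm(\Shv(X;\D(k)))$ is the locale whose frame of opens is the poset $\cIdem(\Shv(X;\D(k)),{\otimes})$ of idempotent coalgebras for the tensor product over the unit~$\unit$ (the constant sheaf $\underline{k}$). The $k$-linearization functor
\[
  F:=\underline{k}\otimes\Sigma^{\infty}_{+}(\X)\colon(\Shv(X),{\times})\to(\Shv(X;\D(k)),{\otimes})
\]
is symmetric monoidal and colimit-preserving, hence an arrow of $\CAlg(\Cat{Pr})$; applying $\Sm$ yields a map of locales $\Sm(\Shv(X;\D(k)))\to\Sm(\Shv(X))$, and I would first check that this agrees with the canonical map of the theorem and that $\Sm(\Shv(X))\simeq X$. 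The latter is immediate from the full faithfulness of \cref{e:shv}: for the cartesian monoidal structure on $\Shv(X)$ the comultiplication of any coalgebra is the diagonal, so idempotent coalgebras are exactly the $(-1)$-truncated objects, i.e.\ the opens of~$X$, and this identifies the unit $X\to\Sm(\Shv(X))$ as an equivalence.

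It therefore suffices to show that $F$ induces a frame isomorphism $\cls{O}(X)=\cIdem(\Shv(X))\xrightarrow{\ \sim\ }\cIdem(\Shv(X;\D(k)))$. On objects this map sends an open $U$, viewed as the subterminal object $h_U=j_{U!}\ast_U$, to $F(h_U)\simeq j_{U!}\unit_U$, the extension by zero of the unit along $j_U\colon U\hookrightarrow X$; the projection formula $j_{U!}\unit_U\otimes(\X)\simeq j_{U!}j_U^{*}(\X)$ together with $j_U^{*}j_{U!}\simeq\id$ shows directly that $j_{U!}\unit_U$ is an idempotent coalgebra with invertible comultiplication. Injectivity is the easy direction: one recovers $U$ from the coalgebra as the support of the associated smashing colocalization $j_{U!}j_U^{*}=j_{U!}\unit_U\otimes(\X)$, so distinct opens give inequivalent coalgebras, and compatibility with finite meets and arbitrary joins follows because $F$ is symmetric monoidal and preserves colimits. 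The content of the theorem is thus concentrated in essential surjectivity: every idempotent coalgebra $C\to\unit$ of $\Shv(X;\D(k))$ is of the form $j_{U!}\unit_U$ for some open~$U$.

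The hard part, and the only place the field hypothesis enters, is precisely this surjectivity, equivalently the assertion that every smashing colocalization of $\Shv(X;\D(k))$ has the form $j_{U!}j_U^{*}$. The strategy is a support-theoretic classification of the smashing colocalizing tensor-ideals of $\Shv(X;\D(k))$. For a spatial~$X$ one argues pointwise: the stalk $x^{*}C$ is an idempotent coalgebra in $\D(k)$, and since $k$ is a field the smashing spectrum of $\D(k)$ is a single point, so $\cIdem(\D(k))=\{0\leq\unit\}$ and hence $x^{*}C\in\{0,\unit\}$; the locus $U=\{x:x^{*}C\neq0\}$ is forced to be open by the coalgebra structure, and a comparison of counits identifies $C$ with $j_{U!}\unit_U$. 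For a general, possibly non-spatial, locale the pointwise argument must be replaced by a pointfree one on a frame-theoretic site: one shows that the colocalizing ideal $\operatorname{im}(C\otimes(\X))$ is generated by the objects $j_{V!}\unit_V$ it contains, and that the join of the corresponding opens $V$ is the sought-after~$U$. I expect this pointfree classification of smashing colocalizations over a field—ruling out any exotic idempotent coalgebra not coming from an open—to be the crux; it is the localic shadow of the general $\infty$-toposic computation carried out in \cite{ttg-shv}.
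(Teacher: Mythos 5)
Your formal reduction is correct and is exactly how the statement sits inside this paper's framework: by \cref{main} (in the form \cref{main_2}), the canonical map is the adjunct of the \(k\)-linearization functor \((\Shv(X),{\times})\to(\Shv(X;\D(k)),{\otimes})\), the identification \(\Sm(\Shv(X))\simeq X\) is the unit isomorphism established in the proof of \cref{main_2}, and the theorem becomes the assertion that \(U\mapsto j_{U!}\unit_U\) is a bijection from the opens of \(X\) onto \(\cIdem(\Shv(X;\D(k)))\). Your easy direction (coidempotence of \(j_{U!}\unit_U\) via the projection formula, injectivity, compatibility with meets and joins) is fine. Be aware, however, that the paper itself contains no proof of this theorem: it is imported from the forthcoming reference \cite{ttg-shv}, and everything the present paper actually proves is already consumed by your formal reduction. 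The entire mathematical content is the essential surjectivity, and that is precisely the step your proposal does not establish.

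Concretely, your spatial sketch has two gaps, and the localic case has no argument at all. First, stalk functors are jointly conservative on \(\Shv(X;\D(k))\) only when that category is hypercomplete: an object with vanishing stalks has vanishing homotopy sheaves, hence is \(\infty\)-connective, but need not vanish for a general topological space \(X\). So even granting that all stalks of \(C\) lie in \(\{0,\unit\}\), a stalkwise comparison of counits cannot identify \(C\) with \(j_{U!}\unit_U\). Second, the claim that \(U=\{x\mid x^*C\neq 0\}\) ``is forced to be open by the coalgebra structure'' is an assertion, not an argument: tensor-idempotents with stalks in \(\{0,\unit\}\) can have non-open support --- the skyscraper \(i_*k\) at a closed point is an idempotent \emph{algebra} with exactly this property --- and under the equivalence \(\Idem\simeq\cIdem\) of \cref{bb24a428eb} it is precisely the coalgebra side for which openness of support is the content of the theorem, so nothing formal gives it to you. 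Ruling out exotic coidempotents is where the field hypothesis must interact with the geometry of \(X\) beyond the stalkwise computation \(\cIdem(\D(k))=\{0,\unit\}\); and for a pointless locale your replacement (``the colocalizing ideal is generated by the \(j_{V!}\unit_V\) it contains'') is a restatement of the desired conclusion rather than a proof, as you yourself acknowledge by deferring to \cite{ttg-shv}. As written, the proposal is a correct formal frame around an unproven core.
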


By combining this with \cref{main} (or \cref{main_sp}),
we immediately obtain the following,
which may be viewed as a Tannaka-type reconstruction result for locales:

\begin{corollary}\label{ee20bfcf81}
  The functor
  \(\Shv(\X;\D(k))\colon\Cat{Loc}^{\op}\to\CAlg_{\D(k)}(\Cat{Pr})\)
  is fully faithful.
\end{corollary}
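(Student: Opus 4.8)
The plan is to present $\Shv(\X;\D(k))$ as a left adjoint and to recognize the unit of this adjunction as pointwise the comparison map whose invertibility is granted by \cref{sm_shv_k}; the conclusion then follows from the standard criterion that a left adjoint is fully faithful exactly when its unit is an equivalence. First I would factor the functor. Since sheaves valued in a presentable symmetric monoidal $\infty$-category are computed by Lurie's tensor product, there is a canonical equivalence $\Shv(X;\D(k))\simeq\D(k)\otimes_{\Cat{Sp}}\Shv(X;\Cat{Sp})$ in $\CAlg_{\D(k)}(\Cat{Pr})$, natural in $X$. In other words, $\Shv(\X;\D(k))$ is the composite of the spectral sheaves functor $\Shv(\X;\Cat{Sp})\colon\Cat{Loc}^{\op}\to\CAlg(\Cat{Pr}_{\st})$ with the extension-of-scalars functor $\D(k)\otimes_{\Cat{Sp}}\X\colon\CAlg(\Cat{Pr}_{\st})\to\CAlg_{\D(k)}(\Cat{Pr})$. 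The first factor is a left adjoint by \cref{main_sp}, with right adjoint $\Sm$; the second is a left adjoint because it is base change along $\Cat{Sp}\to\D(k)$, hence left adjoint to the forgetful functor $U\colon\CAlg_{\D(k)}(\Cat{Pr})\to\CAlg(\Cat{Pr}_{\st})$. Consequently $\Shv(\X;\D(k))$ admits the right adjoint $\Sm\circ U$.

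Next I would identify the unit. The unit of the composite adjunction at a locale $X$ is, after passing from $\Cat{Loc}^{\op}$ back to $\Cat{Loc}$, a morphism $\Sm(\Shv(X;\D(k)))\to X$. Unwinding the two constituent units shows that it is the map classified under the adjunction $\Shv(\X;\Cat{Sp})\dashv\Sm$ of \cref{main_sp} by the canonical colimit-preserving symmetric monoidal linearization functor $\Shv(X;\Cat{Sp})\to\Shv(X;\D(k))$; this is precisely the canonical comparison map appearing in \cref{sm_shv_k}. By that theorem the map is an isomorphism for every locale $X$, so the unit is an equivalence, and therefore $\Shv(\X;\D(k))$ is fully faithful.

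The one point demanding genuine care, rather than formal adjunction bookkeeping, is the compatibility check in the middle step: verifying that the unit obtained by composing the adjunction of \cref{main_sp} with the extension-of-scalars adjunction agrees, as a natural transformation, with the specific comparison map $\Sm(\Shv(X;\D(k)))\to X$ whose invertibility \cref{sm_shv_k} asserts. Once that identification is in place the corollary is immediate. I note that an entirely parallel argument works at the space level using \cref{main} instead of \cref{main_sp}, factoring $\Shv(\X;\D(k))$ through $\D(k)\otimes\Shv(\X)$ and the forgetful functor to $\CAlg(\Cat{Pr})$, and yielding the same conclusion.
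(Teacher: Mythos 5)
Your proposal is correct and is precisely the argument the paper intends: \cref{ee20bfcf81} is presented as an immediate consequence of combining \cref{sm_shv_k} with \cref{main_sp} (or \cref{main}), namely factoring \(\Shv(\X;\D(k))\) as the spectral sheaves functor followed by extension of scalars \(\D(k)\otimes_{\Cat{Sp}}\X\), so that it is a composite of left adjoints whose unit at a locale~\(X\) is the canonical map \(\Sm(\Shv(X;\D(k)))\to X\), invertible by \cref{sm_shv_k}. The compatibility point you flag is not a real gap, since ``canonical'' in \cref{sm_shv_k} means exactly this unit/structure map of the sheaves-spectrum adjunction (cf.\ \cref{topos}).
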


\begin{remark}\label{8e5ba20a00}
  In the statement of \cref{ee20bfcf81},
  both \(\infty\)-categories have canonical bicategorical structures.
  We can show that
  the functor is fully faithful in the bicategorical sense;
  see~\cite[Theorem~3.8]{ttg-shv}.
\end{remark}

\begin{remark}\label{3e273c7114}
  \Cref{main} (or \cref{main_sp})
  is also useful to see the failure of
  a certain Tannaka-type reconstruction statement.
  For example,
  the nontriviality of \(\Sm(\Cat{Sp})\)
  shows that \cref{e:shv_sp} fails to be fully faithful.
  We can see this more concretely:
  Let \(S\) denote the Sierpiński space,
  i.e., the spectrum of a discrete valuation ring.
  Then \(\Shv(S;\Cat{Sp})\)
  can be identified with \(\Fun(\Delta^1,\Cat{Sp})\)
  with the pointwise symmetric monoidal structure.
  Then \(\Fun(\Delta^1,\Cat{Sp})\to\Cat{Sp}\)
  given by
  \begin{equation*}
    (F(0)\to F(1))
    \mapsto
    \fib\bigl(
      F(1)\to L_{\KU}\cofib(F(0)\to F(1))
    \bigr)
  \end{equation*}
  refines to a symmetric monoidal functor,
  but it does not come from any locale map
  \({*}\to S\).

  However,
  when restricting to the full subcategory
  of compact Hausdorff spaces (aka compact regular locales),
  \(\Shv(\X;\Cat{Sp})\) is fully faithful
  as a functor valued in \(\CAlg(\Cat{Pr}_{\st})\),
  as proven in~\cite{ttg-con}.
  The proof uses the “continuous” version of the sheaves–spectrum adjunction,
  which is the subject of~\cite{ttg-con}.
\end{remark}

\subsection{Organization of this paper}\label{ss:outline}

In \cref{s:idem},
we review
idempotent algebras
and smashing localizations and their duals.
In \cref{s:usm},
we introduce the unstable smashing spectrum functor
and prove \cref{main}.
In \cref{s:catloc},
we review the notion of categorified locales
and
prove \cref{pt}.

\subsection*{Acknowledgments}\label{ss:ack}

I thank
Peter Scholze
for useful discussions
and encouragement.
I thank
Peter Scholze
and
Marco Volpe
for comments on a draft.
I thank
the Max Planck Institute for Mathematics for its financial support.

\subsection*{Conventions}\label{ss:conv}

Following~\cite{LurieHTT},
we use the mark \(\widehat{\X}\) to
denote the \(\infty\)-category of large objects;
e.g., \(\widehat{\Cat{S}}\) and \(\widehat{\Cat{Cat}}\)
denote the very large \(\infty\)-category of large spaces
and large \(\infty\)-categories, respectively.

We write \(\Cat{Pr}\) for the \(\infty\)-category
of presentable \(\infty\)-categories
and colimit-preserving functors.
It is denoted
by \(\Cat{Pr}^{\textnormal{L}}\)
in~\cite{LurieHTT,LurieHA}.
We use the closed symmetric monoidal structure
on~\(\Cat{Pr}\) given in~\cite[Section~4.8.1]{LurieHA}.
We write \(\Cat{Pr}_{\st}\) for its full subcategory spanned by
stable presentable \(\infty\)-categories.

In a poset,
\(\bot\) denotes the least element.

\section{(Co)idempotent algebras and smashing (co)localizations}\label{s:idem}

We review the basic definitions and facts of smashing localizations.
In \cref{ss:e},
we introduce the posets
of idempotent objects,
idempotent algebras,
and
smashing localizations.
We then show that they are mutually isomorphic:
\begin{equation*}
  \Loc_{\sm}(\cat{C})
  \simeq
  \CAlg_{\idem}(\cat{C})
  \simeq
  \Idem(\cat{C}).
\end{equation*}
Then we introduce their dual concepts in \cref{ss:c}:
\begin{equation*}
  \cLoc_{\sm}(\cat{C})
  \simeq
  \cCAlg_{\idem}(\cat{C})
  \simeq
  \cIdem(\cat{C}).
\end{equation*}
We see in \cref{ss:ce}
that in the stable case,
these six posets are isomorphic.
We also see that there
is yet another presentation of this poset via ideals.
However, note again
that in the (presentable) unstable situation
what matters are coidempotent objects.

\subsection{Idempotent objects}\label{ss:e}

We recall the following
from~\cite[Definitions~4.8.2.1 and~4.8.2.8]{LurieHA}:

\begin{definition}\label{3362524e4e}
  Let \(\cat{C}\) be a symmetric monoidal \(\infty\)-category.
  \begin{enumerate}
    \item
      An \emph{idempotent object} is
      an object \(e\colon\unit\to E\)
      of the undercategory~\(\cat{C}_{\unit/}\)
      such that \({\id}_E\otimes e\colon E\otimes\unit
      \to E\otimes E\)
      is an equivalence.
      We write \(\Idem(\cat{C})\) for
      the full subcategory of \(\cat{C}_{\unit/}\)
      spanned by idempotent objects.
    \item
      An \emph{idempotent algebra} is
      a commutative algebra object~\(E\)
      whose binary multiplication \(E\otimes E\to E\)
      is an equivalence.
      We write \(\CAlg_{\idem}(\cat{C})\)
      for the full subcategory of \(\CAlg(\cat{C})\)
      spanned by idempotent algebras.
  \end{enumerate}
\end{definition}

\begin{remark}\label{76b74ce1fe}
  An idempotent object is called 
  a “right idempotent” in~\cite{BalmerFavi11}.
\end{remark}

As proven in~\cite[Proposition~4.8.2.9]{LurieHA},
these two concepts are equivalent:

\begin{proposition}[Lurie]\label{b68cfeb026}
  For a symmetric monoidal \(\infty\)-category~\(\cat{C}\),
  the forgetful functor
  \begin{equation*}
    \CAlg(\cat{C})
    \simeq
    \CAlg(\cat{C})_{\unit/}
    \to
    \cat{C}_{\unit/}
  \end{equation*}
  restricts to an equivalence
  \(\CAlg_{\idem}(\cat{C})\to\Idem(\cat{C})\).
\end{proposition}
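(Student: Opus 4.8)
The plan is to split the statement into the easy direction, essential surjectivity, and full faithfulness, with essentially all of the difficulty concentrated in promoting a point-level multiplication to a coherent commutative algebra structure. First I would dispatch the trivial half: if \((E,\mu,e)\) is an idempotent algebra, then the unit axiom identifies \(\mu\circ(\id_E\otimes e)\) with the canonical equivalence \(E\otimes\unit\simeq E\), so \(\id_E\otimes e\) is an equivalence whenever \(\mu\) is. Hence the forgetful functor of \cref{b68cfeb026} does land in \(\Idem(\cat{C})\) and restricts to \(\CAlg_{\idem}(\cat{C})\to\Idem(\cat{C})\) as claimed.

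For essential surjectivity I would pass through the localization attached to an idempotent object. Given \(e\colon\unit\to E\) with \(\id_E\otimes e\) (equivalently, by the symmetry, \(e\otimes\id_E\)) an equivalence, I would consider the endofunctor \(L=E\otimes\X\) together with the natural transformation \(\eta_X=e\otimes\id_X\colon X\to E\otimes X\). Idempotency of \(E\) makes \((L,\eta)\) an idempotent monad, so its local objects form a reflective subcategory \(\cat{C}^E\subseteq\cat{C}\) onto which \(L\) is a left adjoint. Because \(L\) is given by tensoring with a fixed object, this localization is compatible with the tensor product, so \(\cat{C}^E\) inherits a symmetric monoidal structure for which \(L\colon\cat{C}\to\cat{C}^E\) is symmetric monoidal with unit \(L\unit\simeq E\). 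The inclusion \(\cat{C}^E\hookrightarrow\cat{C}\) is then lax symmetric monoidal, and the unit of \(\cat{C}^E\) — which is canonically a commutative algebra — is carried to a commutative algebra structure on \(E\) in \(\cat{C}\) whose unit map is \(e\); by the first paragraph this algebra is automatically idempotent and lifts the given object.

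For full faithfulness I would show that \(\Idem(\cat{C})\) and \(\CAlg_{\idem}(\cat{C})\) are in fact posets, i.e.\ all of their mapping spaces are empty or contractible, and that the forgetful functor matches them. The module-theoretic reading of the previous step is that \(\operatorname{Mod}_E(\cat{C})\simeq\cat{C}^E\) embeds fully faithfully into \(\cat{C}\) as the \(E\)-local objects; consequently an algebra map \(A\to B\) between idempotent algebras is the datum of a lift of \(B\) to \(\CAlg(\cat{C}^A)\), which exists and is unique exactly when \(B\) is \(A\)-local. The same localization identifies \(\Map_{\cat{C}_{\unit/}}(A,B)\) with the corresponding empty-or-contractible space, so the forgetful functor induces an equivalence on every mapping space, and an equivalence of \(\infty\)-categories follows together with essential surjectivity.

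The main obstacle lies entirely in the essential-surjectivity step: not the \emph{existence} of a multiplication \(E\otimes E\to E\), which is just \((\id_E\otimes e)^{-1}\) followed by the unit equivalence, but its promotion to a fully coherent \(E_\infty\)-structure. I expect to handle this not by manipulating higher simplices or a bar construction, but by the device above, extracting the coherent commutative algebra structure for free from the universal property of the unit object of the genuinely symmetric monoidal localization \(\cat{C}^E\); the only point requiring hands-on verification is that tensoring with \(E\) is compatible with the monoidal structure in the precise sense needed to transport it along \(L\).
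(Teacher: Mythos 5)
Your proposal is correct, and it is essentially the argument this paper relies on: the paper does not reprove this proposition but cites \cite[Proposition~4.8.2.9]{LurieHA}, whose proof runs exactly along your lines. Namely, one realizes an idempotent object \(e\colon\unit\to E\) as the unit of the symmetric monoidal localization of \(\cat{C}\) onto the \(E\otimes\X\)-local objects, transports the canonical commutative algebra structure on that unit along the lax symmetric monoidal inclusion, and computes mapping spaces on both sides via locality.
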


We first recall the notion of smashing localization.
See~\cite[Section~5.2.7]{LurieHTT} for general facts
about localizations in the \(\infty\)-categorical setting.

\begin{definition}\label{03a6f7ab93}
  Let \(\Loc(\cat{C})\) denote
  the full subcategory of \(\End(\cat{C})_{\id/}\)
  spanned by localizations.
  We say that a localization~\(L\) is \emph{smashing}
  if \(L\) is equivalent to \(L(\unit)\otimes\X\).
  We let \(\Loc_{\sm}(\cat{C})\) denote
  the full subcategory
  of \(\Loc(\cat{C})\)
  spanned by smashing localizations.
\end{definition}

The following is well known;
e.g.,it was observed after~\cite[Definition~3-2]{GepnerGrothNikolaus15}
and implicit in~\cite[Section~4.8.2]{LurieHA}:

\begin{proposition}\label{e238c29d68}
  For a symmetric monoidal \(\infty\)-category~\(\cat{C}\),
  the functor
  \begin{align*}
    \Loc(\cat{C})
    &\to
    \cat{C}_{\unit/};&
    ({\id}\to L)\mapsto(\unit\to L\unit)
  \end{align*}
  restricts to an equivalence
  \(\Loc_{\sm}(\cat{C})\to\Idem(\cat{C})\).
\end{proposition}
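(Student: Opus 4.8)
The plan is to analyse the stated functor as evaluation at the unit, $\operatorname{ev}_\unit\colon\End(\cat{C})_{\id/}\to\cat{C}_{\unit/}$, $(\id\to L)\mapsto(\unit\to L\unit)$, and to show that it carries $\Loc_{\sm}(\cat{C})$ into $\Idem(\cat{C})$ and restricts there to an equivalence; I would check essential surjectivity by hand and deduce full faithfulness formally after recording that both sides are posets. First, that it lands in $\Idem(\cat{C})$: let $(\id\xrightarrow{\alpha}L)$ be smashing (\cref{03a6f7ab93}), write $E\coloneqq L\unit$ and $e\coloneqq\alpha_\unit\colon\unit\to E$, and use the equivalence $L\simeq E\otimes\X$ of endofunctors. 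Since $L$ is a localization, $L\alpha\colon L\to L^2$ is an equivalence by \cite[Proposition~5.2.7.4]{LurieHTT}; evaluating at $\unit$ and using $L\simeq E\otimes\X$ identifies the component $L(\alpha_\unit)$ with $\id_E\otimes e\colon E\otimes\unit\to E\otimes E$, which is therefore an equivalence, so that $e$ is idempotent in the sense of \cref{3362524e4e}.

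For essential surjectivity I would run the construction in reverse. Given an idempotent object $e\colon\unit\to E$, \cref{b68cfeb026} endows $E$ with an essentially unique idempotent commutative algebra structure whose multiplication $m\colon E\otimes E\to E$ is an equivalence; the two unit axioms identify both $e\otimes\id_E$ and $\id_E\otimes e$ with $m^{-1}$. Consequently the coaugmented endofunctor $(E\otimes\X,\;e\otimes\X)$ has its two natural transformations $L\alpha$ and $\alpha L$ to $E\otimes E\otimes\X$ both equal to $m^{-1}\otimes\X$, hence equal and invertible; by \cite[Proposition~5.2.7.4]{LurieHTT} it is a localization, it is smashing by construction, and $\operatorname{ev}_\unit$ returns $(\unit\xrightarrow{e}E)$. (Alternatively one may cite the module-theoretic description of this localization in \cite[Section~4.8.2]{LurieHA}.)

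It remains to promote these inverse constructions to an equivalence, and here I would use that both categories are posets. On the target, \cref{b68cfeb026} identifies $\Idem(\cat{C})$ with $\CAlg_{\idem}(\cat{C})$, in which the multiplication of an idempotent algebra $E$ is the codiagonal of the coproduct $E\otimes E$; thus the diagonal $\Map(E,A)\to\Map(E,A)\times\Map(E,A)$ is an equivalence for every $A$, forcing each mapping space to be $(-1)$-truncated. On the source, $\Loc(\cat{C})$ is the usual poset of reflective subcategories with essentially unique compatible natural transformations (\cite[Section~5.2.7]{LurieHTT}). For a functor between posets it now suffices to verify essential surjectivity together with preservation and reflection of the order. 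Essential surjectivity is the previous paragraph, and the two constructions are mutually inverse on objects since a smashing $L$ is recovered from $E=L\unit$ as $E\otimes\X$; preservation of order is automatic as $\operatorname{ev}_\unit$ is a functor; and the order is reflected because a morphism $f\colon E\to E'$ of idempotent objects induces $f\otimes\X\colon E\otimes\X\to E'\otimes\X$ over $\id$, which is a morphism in the full subcategory $\Loc_{\sm}(\cat{C})\subseteq\End(\cat{C})_{\id/}$.

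I expect the essential-surjectivity step to be the main obstacle: it is the only place where idempotence is genuinely consumed, and the delicate point is that the two coaugmentations $L\alpha$ and $\alpha L$ must coincide before \cite[Proposition~5.2.7.4]{LurieHTT} can be invoked. I would avoid checking higher coherences by routing this through the algebra structure of \cref{b68cfeb026}, which forces both to equal $m^{-1}\otimes\X$; once the localization is produced, the poset observations reduce the remaining comparison to the elementary order-theoretic bookkeeping above.
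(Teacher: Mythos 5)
First, a point of comparison: the paper does not actually prove \cref{e238c29d68} --- it records it as well known, deferring to the discussion after Definition~3-2 of \cite{GepnerGrothNikolaus15} and to \cite[Section~4.8.2]{LurieHA} --- so your proposal is measured against that standard argument, and it is essentially that argument: the forward direction via \cite[Proposition~5.2.7.4]{LurieHTT}, essential surjectivity by equipping an idempotent object with its algebra structure (\cref{b68cfeb026}) and checking that \(E\otimes\X\) with coaugmentation \(e\otimes\X\) satisfies the localization criterion, and a reduction of full faithfulness to order-theoretic bookkeeping between posets. Your codiagonal argument that mapping spaces out of idempotent algebras are \((-1)\)-truncated is correct and clean. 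One mischaracterization, though it costs you nothing: \cite[Proposition~5.2.7.4]{LurieHTT} does \emph{not} require the two coaugmentations \(L\alpha\) and \(\alpha L\) to coincide, only that both are pointwise equivalences; the ``delicate point'' you flag at the end is therefore not a requirement of the cited result, and your proof establishes the stronger statement anyway.

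The one genuine gap is in your final paragraph, in the two places where you silently identify a smashing localization \((\alpha\colon\id\to L)\) with \((e\otimes\X\colon\id\to E\otimes\X)\), where \(E=L\unit\) and \(e=\alpha_{\unit}\): namely the claim that ``a smashing \(L\) is recovered from \(E=L\unit\) as \(E\otimes\X\)'', and the order-reflection step, where \(f\otimes\X\) is a morphism from \((E\otimes\X,e\otimes\X)\) to \((E'\otimes\X,e'\otimes\X)\) in \(\End(\cat{C})_{\id/}\) whereas you need a morphism \(L\to L'\). \Cref{03a6f7ab93} only provides an equivalence \(L\simeq E\otimes\X\) of endofunctors, with no compatibility with the coaugmentations: transporting \(\alpha\) along a chosen equivalence \(\theta\colon L\simeq E\otimes\X\) yields \(\theta\circ\alpha\), whose component at \(\unit\) is \(\theta_{\unit}\circ e\), and \(\theta_{\unit}\) may a~priori be a nontrivial self-equivalence of \(E\). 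So an argument is needed that \((L,\alpha)\simeq(E\otimes\X,e\otimes\X)\) as objects of \(\End(\cat{C})_{\id/}\). The repair is standard: your essential-surjectivity paragraph shows that \((E\otimes\X,e\otimes\X)\) is itself a localization; it has the same essential image as \(L\) (the two are equivalent as functors); and two localizations under \(\id\) with the same essential image agree in \(\End(\cat{C})_{\id/}\), since by \cite[Proposition~5.2.7.4]{LurieHTT} each coaugmentation is a unit exhibiting a left adjoint to the inclusion of that common image, and left adjoints together with their units are unique up to contractible choice. The same cluster of facts underlies your assertion, attributed to \cite[Section~5.2.7]{LurieHTT}, that \(\Loc(\cat{C})\) is a poset: that statement is true but is not proved there, and it requires the analogous argument (a morphism in \(\End(\cat{C})_{\id/}\) between localizations exists precisely when the corresponding classes of local objects are nested, and is then unique up to contractible choice). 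With these two points supplied, your proof is complete and coincides with the standard one that the paper cites.
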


\begin{remark}\label{efeaa9c59c}
  In the triangulated case,
  an analog of \cref{e238c29d68}
  is proven in~\cite[Section~3]{BalmerFavi11}.
\end{remark}

This presentation has the following immediate consequences:

\begin{corollary}\label{493f4da890}
  For a symmetric monoidal \(\infty\)-category~\(\cat{C}\),
  the canonical functor \(\cat{C}\to\h\cat{C}\)
  to its homotopy \(1\)-category
  induces an equivalence on \(\Idem\).
\end{corollary}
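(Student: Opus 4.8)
The plan is to show that the map of posets
\(\Idem(\cat{C})\to\Idem(\h\cat{C})\)
induced by the symmetric monoidal projection \(\cat{C}\to\h\cat{C}\)
is an equivalence.
Recall from \cite[Section~4.8.2]{LurieHA} that both \(\Idem(\cat{C})\) and \(\Idem(\h\cat{C})\) are posets, so all their mapping spaces are \((-1)\)-truncated; an equivalence between them therefore amounts to surjectivity on isomorphism classes together with the assertion that the functor is an isomorphism of the underlying orders.
First I would record that the functor is well defined: the projection is symmetric monoidal (the homotopy category of a symmetric monoidal \(\infty\)-category is symmetric monoidal, and the projection is strong monoidal), and a symmetric monoidal functor carries \(\id_E\otimes e\) to \(\id_E\otimes e\) up to the coherence isomorphisms while preserving equivalences, hence sends idempotent objects to idempotent objects.

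For surjectivity on isomorphism classes, I would argue that an idempotent object of \(\h\cat{C}\)—that is, an object \(E\) together with a homotopy class \([e]\colon\unit\to E\) such that \(\id_E\otimes[e]\) is invertible—admits a lift.
Choosing any representative \(e\colon\unit\to E\) in \(\cat{C}\) of the class \([e]\), the morphism \(\id_E\otimes e\) has image \(\id_E\otimes[e]\) in \(\h\cat{C}\), which is an isomorphism by hypothesis.
Since a morphism of \(\cat{C}\) is an equivalence if and only if its image in \(\h\cat{C}\) is an isomorphism, \(\id_E\otimes e\) is an equivalence, so \((E,e)\) is idempotent and maps to \((E,[e])\).

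The remaining point—which I expect to be the \emph{main obstacle}—is to see that the order relation is itself detected by \(\h\cat{C}\).
Here I would pass through \cref{b68cfeb026} to work with idempotent algebras, the forgetful equivalence being natural in \(\cat{C}\) so that the square with \(\Idem(\cat{C})\to\Idem(\h\cat{C})\) commutes.
The key input is the standard description of mapping spaces among idempotent algebras (\cite[Section~4.8.2]{LurieHA}): for idempotent \(E,E'\) the space \(\Map_{\CAlg(\cat{C})}(E,E')\) is contractible when the canonical morphism \(e\otimes\id_{E'}\colon E'\to E\otimes E'\) is an equivalence and is empty otherwise.
This condition refers only to the tensor product and the unit map \(e\), both preserved by the projection, and the clause ``is an equivalence'' is equivalent to ``is an isomorphism in \(\h\cat{C}\)''.
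Hence \(E\le E'\) holds in \(\Idem(\cat{C})\) if and only if it holds in \(\Idem(\h\cat{C})\).

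Combining these observations finishes the argument: the functor is surjective on isomorphism classes, and because the order is both preserved and reflected it is injective on isomorphism classes as well (two idempotents that become equal in \(\h\cat{C}\) satisfy \(E\le E'\le E\) there, hence also in \(\cat{C}\)).
As all the relevant mapping spaces are \((-1)\)-truncated, agreement of the order relation upgrades preservation and reflection to full faithfulness, yielding the desired equivalence \(\Idem(\cat{C})\xrightarrow{\ \sim\ }\Idem(\h\cat{C})\).
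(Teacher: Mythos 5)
Your argument is correct; I see no gap. It does, however, take a different route from the paper's. The paper gives no written proof: it derives \cref{493f4da890} (together with \cref{e16029d69e}) as an ``immediate consequence'' of \cref{e238c29d68}, i.e., of the presentation \(\Idem(\cat{C})\simeq\Loc_{\sm}(\cat{C})\). The point there is that a smashing localization is governed by data invisible to higher homotopies: whether \(E\otimes\X\) with unit \(e\otimes\X\) is a localization amounts to requiring certain natural transformations to be equivalences, and the partial order on localizations is governed by the replete full subcategories of local objects; all of this is detected in \(\h\cat{C}\). You instead route through \cref{b68cfeb026}, identifying \(\Idem(\cat{C})\) with \(\CAlg_{\idem}(\cat{C})\), and your key lemma is Lurie's description of mapping spaces between idempotent algebras: \(\Map_{\CAlg(\cat{C})}(E,E')\) is \((-1)\)-truncated and nonempty exactly when \(e\otimes\id_{E'}\colon E'\to E\otimes E'\) is an equivalence. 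Both arguments ultimately rest on the same two detection principles---equivalences, hence idempotency and \(E\)-locality, are detected in \(\h\cat{C}\)---and differ only in which auxiliary presentation of \(\Idem\) packages the morphisms: yours makes the order relation explicit and quotes \cite[Section~4.8.2]{LurieHA} directly, while the paper's makes this corollary and the poset statement fall out of the localization picture simultaneously, using the standard facts about localizations from \cite[Section~5.2.7]{LurieHTT}. One cosmetic remark: your opening appeal to \cite[Section~4.8.2]{LurieHA} for ``\(\Idem\) is a poset'' is the same \((-1)\)-truncatedness fact you invoke later (the paper instead derives poset-ness as the sibling \cref{e16029d69e}), so nothing is circular, but that citation is doing double duty in your write-up.
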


\begin{corollary}\label{e16029d69e}
  For a symmetric monoidal \(\infty\)-category~\(\cat{C}\),
  the \(\infty\)-category \(\Idem(\cat{C})\) is
  equivalent to a (unique) poset.
\end{corollary}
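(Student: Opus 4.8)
The plan is to transport the statement across the equivalence $\CAlg_{\idem}(\cat{C})\simeq\Idem(\cat{C})$ of \cref{b68cfeb026}, so that it suffices to prove that $\CAlg_{\idem}(\cat{C})$ is equivalent to a poset. Here I would use the standard criterion that an $\infty$-category is equivalent to a (necessarily unique) poset precisely when it is \emph{thin}, i.e., when all of its mapping spaces are $(-1)$-truncated (empty or contractible); the associated poset is then the set of equivalence classes of objects, ordered by the existence of a morphism. Thus the entire task reduces to controlling mapping spaces out of idempotent algebras.

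To do this I would exploit the coproduct structure on $\CAlg(\cat{C})$: the coproduct of commutative algebras is computed by the tensor product, and under this identification the codiagonal $E\coprod E\simeq E\otimes E\to E$ of any $E\in\CAlg(\cat{C})$ is the multiplication map, so that $E$ is idempotent exactly when its codiagonal is an equivalence. The key observation is then purely formal: if an object $E$ of any $\infty$-category admits a coproduct $E\coprod E$ whose codiagonal $\nabla\colon E\coprod E\to E$ is an equivalence, then $\Map(E,F)$ is $(-1)$-truncated for every $F$. Indeed, the two coprojections induce an equivalence $\Map(E\coprod E,F)\simeq\Map(E,F)\times\Map(E,F)$ under which precomposition with $\nabla$ becomes the diagonal of $\Map(E,F)$; since $\nabla$ is an equivalence so is this diagonal, and a space whose diagonal is an equivalence is $(-1)$-truncated. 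Applying this with $E$ an idempotent algebra shows that every mapping space in $\CAlg_{\idem}(\cat{C})$ is empty or contractible, whence $\CAlg_{\idem}(\cat{C})$---and therefore $\Idem(\cat{C})$---is equivalent to a unique poset.

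I expect the genuinely substantive content to be the formal codiagonal lemma above; everything else is bookkeeping. The points to handle with care are that the coproduct in $\CAlg(\cat{C})$ really is the tensor product and that the codiagonal is carried to the multiplication (both standard, but worth citing from \cite{LurieHA}), together with the elementary fact that a space is $(-1)$-truncated if and only if its diagonal is an equivalence. An alternative route would instead invoke \cref{e238c29d68} and the general principle that smashing localizations are ordered by inclusion of their local subcategories; however, the coproduct argument is self-contained and sidesteps any discussion of reflective localizations.
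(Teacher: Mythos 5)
Your proof is correct, and it takes a genuinely different route from the paper's. The paper obtains this corollary as an immediate consequence of \cref{e238c29d68}: identifying \(\Idem(\cat{C})\) with \(\Loc_{\sm}(\cat{C})\subset\Loc(\cat{C})\), the poset property is inherited from the standard fact that the space of morphisms between localization functors under the identity is empty or contractible (cf.\ \cite[Section~5.2.7]{LurieHTT}); this is precisely the ``alternative route'' you mention in your closing paragraph. You instead transport along \cref{b68cfeb026} and argue inside \(\CAlg(\cat{C})\), using that its induced symmetric monoidal structure is cocartesian (\cite[Proposition~3.2.4.7]{LurieHA}), so that idempotency of \(E\) says exactly that the codiagonal \(E\amalg E\to E\) is an equivalence; your formal lemma that this forces \(\Map_{\CAlg(\cat{C})}(E,F)\) to be \((-1)\)-truncated for every \(F\) is sound, including the identification of precomposition by the codiagonal with the diagonal of the mapping space, and the characterization of \((-1)\)-truncated spaces as those whose diagonal is an equivalence. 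A clean way to phrase your lemma: since the unit algebra \(\unit\) is initial in \(\CAlg(\cat{C})\), an idempotent algebra is exactly an algebra \(E\) such that \(\unit\to E\) is an epimorphism in \(\CAlg(\cat{C})\), and epimorphisms out of a fixed object always form a poset---the dual of the statement about subterminal objects appearing in \cref{cart_sm}. What your route buys: it is self-contained, avoids reflective localization theory altogether, and proves the stronger statement that maps from an idempotent algebra to an \emph{arbitrary} algebra form a \((-1)\)-truncated space. What the paper's route buys is uniformity: the same presentation simultaneously yields the companion \cref{493f4da890}, which your argument does not address (nor does it need to).
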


By abuse of terminology,
given a symmetric monoidal \(\infty\)-category,
we simply regard \(\Idem(\cat{C})\) as a poset.

\subsection{Coidempotent objects}\label{ss:c}

First,
recall that
any symmetric monoidal structure
determines a symmetric monoidal structure on its opposite \(\infty\)-category;
see~\cite[Remark~2.4.2.7]{LurieHA}.
We define
\begin{align*}
  \cIdem(\cat{C})&=
  \Idem(\cat{C}^{\op})^{\op},&
  \cCAlg_{\idem}(\cat{C})&=
  \CAlg_{\idem}(\cat{C}^{\op})^{\op},&
  \cLoc_{\sm}(\cat{C})&=
  \Loc_{\sm}(\cat{C}^{\op})^{\op}\rlap.
\end{align*}
As we have seen in \cref{ss:e},
they are equivalent.
Objects in those \(\infty\)-categories
are concretely described as follows:

\begin{definition}\label{859fe7c317}
  Let \(\cat{C}\) be a symmetric monoidal \(\infty\)-category.
  A \emph{coidempotent object}
  is an object \(c\colon C\to\unit\)
  of the overcategory \(\cat{C}_{/\unit}\)
  such that
  \(C\otimes c\colon C\otimes C\to C\otimes\unit\simeq C\)
  is an equivalence.
  By abuse of notation,
  we often simply write~\(C\).
  A cocommutative coalgebra object~\(C\)
  is called idempotent\footnote{Here
    “coidempotent coalgebra” can also be a reasonable choice
    of terminology,
    but note that for a bialgebra object,
    its underlying algebra is idempotent
    if and only if
    so is its underlying coalgebra.
  } if its comultiplication morphism \(C\to C\otimes C\) is an equivalence.
  We call a colocalization \(T\colon\cat{C}\to\cat{C}\) \emph{smashing}
  if \(T\) is equivalent to \(T(\unit)\otimes\X\).
\end{definition}

\begin{remark}\label{5d9376b79e}
  A coidempotent object is called 
  a “left idempotent” in~\cite{BalmerFavi11}.
\end{remark}

\begin{remark}\label{cart_uf}
  If
  the unit~\(\unit\) in
  a symmetric monoidal \(\infty\)-category~\(\cat{C}\)
  is final,
  there is a canonical fully faithful embedding
  \(\cIdem(\cat{C})\hookrightarrow\cat{C}\).
\end{remark}

\begin{example}\label{cart_sm}
  For any \(\infty\)-category~\(\cat{C}\)
  having finite products,
  we can consider
  the cartesian symmetric monoidal structure
  on it;
  see~\cite[Section~2.4.1]{LurieHA}.
  By \cref{cart_uf},
  \(\cIdem(\cat{C})\) can be regarded as a full subcategory.
  Moreover in this case
  it is identified with
  the poset of subterminal objects,
  i.e., objects~\(C\) such
  that the morphisms \(C\to{*}\)
  to the final object are monomorphic.
\end{example}

\begin{example}\label{topos}
  Consider 
  the cartesian symmetric monoidal structure
  on an \(\infty\)-topos~\(\cat{X}\).
  Then by \cref{cart_sm},
  the poset \(\cIdem(\cat{X})\)
  is the frame of open subtoposes of~\(\cat{X}\).
  Let \(X\) denote the corresponding locale.
  The inverse image functor
  of the localic reflection
  \(\Shv(X)\to\cat{X}\)
  is identified with the unit of
  the sheaves–spectrum adjunction
  (\cref{main}).
\end{example}

\subsection{The stable case}\label{ss:ce}

In the following,
recall that
a \emph{stably symmetric monoidal \(\infty\)-category}
is a stable \(\infty\)-category
equipped
with a symmetric monoidal structure
whose tensor product is exact in each variable.

\begin{proposition}\label{bb24a428eb}
  For a stably symmetric monoidal \(\infty\)-category~\(\cat{C}\),
  there is
  a canonical equivalence \(\Idem(\cat{C})\simeq\cIdem(\cat{C})\).
\end{proposition}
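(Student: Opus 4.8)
The plan is to obtain the asserted equivalence as the restriction of a canonical equivalence $\cat{C}_{\unit/}\simeq\cat{C}_{/\unit}$ between the under- and overcategories of the unit that is available for any stable $\infty$-category. First I would set up this ambient equivalence. Let $\cat{M}\subseteq\Fun(\Delta^1\times\Delta^1,\cat{C})$ be the full subcategory of those squares that are bicartesian and whose lower-left vertex is a zero object. Because $\cat{C}$ is stable, pushout and pullback squares coincide, and the usual Kan-extension arguments from the theory of stable $\infty$-categories \cite{LurieHA} show that restricting such a square to its top edge, giving $\cat{M}\to\cat{C}_{/\unit}$, and to its right edge, giving $\cat{M}\to\cat{C}_{\unit/}$, are both trivial fibrations. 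Composing one with an inverse of the other yields an equivalence that, on objects, sends $(\unit\xrightarrow{e}E)$ to its fiber $(\fib(e)\to\unit)$ and conversely $(C\xrightarrow{c}\unit)$ to its cofiber $(\unit\to\cofib(c))$.

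The heart of the argument is then to check that this equivalence carries idempotent objects to coidempotent ones. Given an idempotent object $e\colon\unit\to E$, put $C=\fib(e)$, so that $C\to\unit\xrightarrow{e}E$ is simultaneously a fiber and a cofiber sequence. Tensoring it on the left with $E$ — an exact operation, since $\cat{C}$ is stably symmetric monoidal — gives a fiber sequence $E\otimes C\to E\to E\otimes E$ whose second map is $\id_E\otimes e$; this is an equivalence by idempotence, so $E\otimes C\simeq0$ and hence $C\otimes E\simeq0$ by symmetry. Tensoring the original sequence on the left with $C$ now produces $C\otimes C\to C\to C\otimes E\simeq0$, which exhibits $\id_C\otimes c\colon C\otimes C\to C$ as an equivalence; this is exactly the condition that $C\to\unit$ be coidempotent.

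To upgrade this into a restricted equivalence I would invoke duality: applying the previous paragraph to $\cat{C}^{\op}$, which is again stably symmetric monoidal and in which idempotent and coidempotent objects as well as fibers and cofibers are interchanged, shows that $\cofib$ carries coidempotent objects to idempotent ones. Thus the ambient equivalence restricts to mutually inverse equivalences between the full subcategories $\Idem(\cat{C})\subseteq\cat{C}_{\unit/}$ and $\cIdem(\cat{C})\subseteq\cat{C}_{/\unit}$. Finally, since $\Idem(\cat{C})$ is a poset by \cref{e16029d69e} and $\cIdem(\cat{C})=\Idem(\cat{C}^{\op})^{\op}$ is a poset by the same corollary applied to $\cat{C}^{\op}$, this equivalence is automatically an isomorphism of posets, and it is canonical by construction.

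The step I expect to demand the most care is making the ambient equivalence $\cat{C}_{\unit/}\simeq\cat{C}_{/\unit}$ coherent rather than merely a bijection on objects; the fiber/cofiber computation is short but is precisely where stability is essential. Indeed, the vanishing $C\otimes E\simeq0$ has no unstable counterpart — consistent with the paper's emphasis that the unstable theory must be phrased in terms of coidempotent objects directly — so I would not expect any shortcut that avoids the exactness of the tensor product.
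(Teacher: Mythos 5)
Your proposal is correct and takes essentially the same route as the paper: both pass through the category of bicartesian squares \(C\to\unit\to E\) with zero corner, establish the key vanishing \(C\otimes E\simeq\fib(e\otimes E)\simeq0\) from idempotence of~\(e\) to conclude that \(C\otimes c\) is an equivalence, and obtain the reverse direction by duality. The only cosmetic difference is that you first build the ambient equivalence \(\cat{C}_{\unit/}\simeq\cat{C}_{/\unit}\) and then restrict it, whereas the paper works directly with the full subcategories of such squares whose relevant edges are idempotent, respectively coidempotent, and shows they coincide.
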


\begin{proof}
  Let \(\Idem'(\cat{C})\) (\(\cIdem'(\cat{C})\), respectively)
  be the full subcategory of
  \(\Fun(\Delta^1\times\Delta^1,\cat{C})\)
  spanned by
  cartesian diagrams of the form
  \begin{equation}
    \label{e:51521ac8}
    \begin{tikzcd}
      C\ar[r,"c"]\ar[d]&
      \unit\ar[d,"e"]\\
      0\ar[r]&
      E\rlap,
    \end{tikzcd}
  \end{equation}
  where \(e\) is an idempotent object
  (\(c\) is a coidempotent object, respectively).
  As the obvious forgetful functors
  \(\Idem'(\cat{C})\to\Idem(\cat{C})\)
  and \(\cIdem'(\cat{C})\to\cIdem(\cat{C})\)
  are equivalences,
  we need to show that these two \(\infty\)-categories coincide.
  By symmetry, it suffices to show that
  \(\Idem'(\cat{C})\subset\cIdem'(\cat{C})\) holds.
  We consider an arbitrary cartesian diagram
  \cref{e:51521ac8} such that
  \(e\) is an idempotent object.
  We wish to show that
  \(C\otimes c\colon C\otimes C\to C\otimes\unit\) is an equivalence.
  To show this, we need to show that its cofiber
  \(C\otimes E\) is zero.
  This follows from the observation that
  \(C\otimes E\) is also a fiber of
  \(e\otimes E\colon \unit\otimes E\to E\otimes E\),
  which is an equivalence,
  since \(e\) is an idempotent object.
\end{proof}

In the stable situation, we have yet another way to
describe a smashing localization.

\begin{definition}\label{e1bda94d50}
  Let \(\cat{C}\) be a stably symmetric monoidal \(\infty\)-category.
  Its full subcategory is called a \emph{smashing ideal}
  if it is the kernel of some smashing localization.
  Let \(\Idl_{\sm}(\cat{C})\) denote the (essential) poset
  of smashing ideals.
\end{definition}

\begin{proposition}\label{0d0edeb2ed}
  For a stably symmetric monoidal \(\infty\)-category~\(\cat{C}\),
  the kernel functor
  gives an equivalence
  from
  \(\Loc_{\sm}(\cat{C})\)
  to \(\Idl_{\sm}(\cat{C})\).
\end{proposition}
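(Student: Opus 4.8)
The plan is to transport everything to the poset of idempotent objects and then to detect inclusions of kernels by evaluating on a single object. Since \(\Idl_{\sm}(\cat{C})\) is by \cref{e1bda94d50} exactly the essential image of the kernel assignment \(L\mapsto\ker L\), essential surjectivity holds tautologically; and both \(\Loc_{\sm}(\cat{C})\) and \(\Idl_{\sm}(\cat{C})\) are posets, the former by \cref{e238c29d68,e16029d69e}. Hence it suffices to show that the kernel assignment is a well-defined order-embedding, i.e.\ that for smashing localizations \(L,L'\) one has \(L\le L'\) if and only if \(\ker L\subseteq\ker L'\). Using \cref{e238c29d68} I set \(E=L(\unit)\) and \(E'=L'(\unit)\), so that \(L\simeq E\otimes\X\) and \(\ker L=\{X\in\cat{C}:E\otimes X\simeq0\}\), and likewise for \(L'\).

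First I would treat the forward implication, which simultaneously shows the assignment is monotone (so that it really lands in the poset \(\Idl_{\sm}(\cat{C})\)). Given \(L\le L'\), witnessed by a map \(L\to L'\) under \(\id\), the localization unit \(X\to L'X\) factors as \(X\to LX\to L'X\); if \(X\in\ker L\) then it factors through \(LX\simeq0\) and is therefore null. Applying \(L'\) and using that a localization carries its own unit to an equivalence, I conclude that the identity of \(L'X\) is null, so \(L'X\simeq0\) and \(X\in\ker L'\). This step uses only that \(L,L'\) are localizations, not smashingness.

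The reverse implication is where smashingness enters, and it is the \emph{main obstacle}. The idea is to test the inclusion on the single coidempotent object \(F=\fib(\unit\to E)\) underlying \(L\) (cf.\ \cref{859fe7c317,bb24a428eb}). Because \(e\colon\unit\to E\) is idempotent, \(E\otimes e\) is an equivalence, whence \(E\otimes F\simeq\fib(E\to E\otimes E)\simeq0\) and \(F\in\ker L\). Assuming \(\ker L\subseteq\ker L'\) gives \(F\in\ker L'\), i.e.\ \(E'\otimes F\simeq0\); tensoring the fiber sequence \(F\to\unit\to E\) with \(E'\) shows this is equivalent to \(E'\to E'\otimes E\) being an equivalence. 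Composing \(E\xrightarrow{E\otimes e'}E\otimes E'\) with the inverse of the equivalence \(E'\xrightarrow{\sim}E\otimes E'\) (obtained from \(E'\simeq E'\otimes E\) by symmetry) then yields a map \(E\to E'\) whose precomposition with \(e\) is homotopic to \(e'\), hence a morphism in \(\cat{C}_{\unit/}\) between idempotent objects; this exhibits \(E\le E'\) and therefore \(L\le L'\) by \cref{e238c29d68}. The only delicate point is the translation of a single kernel-membership back into the order relation on idempotent algebras; everything else is a routine manipulation of the defining idempotency equivalences.
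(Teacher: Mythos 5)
Your proof is correct, but it takes a genuinely different route from the paper's. The paper's proof is a two-line, top-down argument: it considers the poset \(\Sub(\cat{C})\) of replete full subcategories and observes that the kernel functor \(\Loc(\cat{C})\to\Sub(\cat{C})\) is fully faithful, because in a stable \(\infty\)-category a morphism is an equivalence if and only if its fiber vanishes, so the class of \(L\)-inverted morphisms---which is what encodes the ordering on localizations---is recovered from \(\ker L\) as the class of morphisms whose fiber lies in \(\ker L\); the proposition then follows by restricting this embedding to the smashing objects on both sides. (Note that identifying \(\fib(Lf)\) with \(L\fib(f)\) in that argument uses exactness of the localizations involved, which is automatic for smashing ones since the tensor product is exact in each variable.) You instead transport the question to \(\Idem(\cat{C})\) via \cref{e238c29d68} and verify the order-embedding property by hand: the forward implication by the unit-factorization argument, which as you note is pure localization theory, and the reverse implication by testing kernel membership on the single coidempotent \(F=\fib(\unit\to E)\) and then explicitly assembling the comparison map \(\varphi\colon E\to E'\) from the idempotency equivalences---the one elided step, \(\varphi\circ e\simeq e'\), follows from the interchange identity \((E\otimes e')\circ e\simeq e\otimes e'\simeq(e\otimes E')\circ e'\), and is routine. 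This is essentially the Balmer--Favi style manipulation of left and right idempotents, in the spirit of \cref{bb24a428eb}. What each approach buys: the paper's argument is shorter and isolates a general principle (localizations of a stable \(\infty\)-category embed into subcategories via kernels), while yours makes explicit exactly where smashingness enters (only in the reverse implication), proves the slightly sharper fact that the ordering of smashing localizations is detected by the single kernel membership \(\fib(\unit\to E)\in\ker L'\), and relies only on exactness of the tensor product rather than on any assertion about general localizations.
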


\begin{proof}
  Let \(\Sub(\cat{C})\) denote the poset
  of replete\footnote{We say that a full subcategory is called \emph{replete}
    if it is closed under equivalences.
  } full subcategories of~\(\cat{C}\).
  Then the kernel functor
  \(\Loc(\cat{C})\to\Sub(\cat{C})\)
  is fully faithful,
  since in a stable \(\infty\)-category
  a morphism is an equivalence if and only if
  its fiber is zero.
  The desired equivalence is obtained
  as a restriction of this inclusion.
\end{proof}

\section{The unstable smashing spectrum}\label{s:usm}

In this section,
we define the unstable smashing spectrum functor~\(\Sm\) and
prove the main theorem (\cref{main}),
which says that \(\Sm\) is right adjoint to
the sheaves functor \(\Shv\).
We first see 
in \cref{ss:lat} that
the poset of idempotent coalgebras
of a presentably symmetric monoidal \(\infty\)-category
is a frame up to size issues.
Then in \cref{ss:small},
we show that it is small,
which enables us to define
the functor~\(\Sm\).
In \cref{ss:adj},
we prove \cref{main}.
In the proof,
we need a technical statement
about sheaves on a locale,
which we show in \cref{ss:shv_loc}.

\subsection{Lattice-theoretic properties of \texorpdfstring{\(\cIdem\)}{cIdem}}\label{ss:lat}

In this section,
we show
how properties
on a symmetric monoidal \(\infty\)-category~\(\cat{C}\)
correspond to
lattice-theoretic properties on \(\cIdem(\cat{C})\).
First, we recall some standard notions from lattice theory:

\begin{definition}\label{7014181f28}
  A (bounded) \emph{lower semilattice} is a poset having finite meets.
  A morphism between lower semilattices
  is a function preserving finite meets,
  which is automatically order preserving.
  We write \(\Cat{SLat}\)
  for the category of lower semilattices.
  From now on, we simply call them \emph{semilattices}.
\end{definition}

\begin{remark}\label{9d77cd31d7}
  Lattice theorists typically favor upper semilattices
  over lower semilattices.
\end{remark}

\begin{definition}\label{618aee4b9e}
  A (bounded) \emph{distributive lattice}
  is a semilattice
  that has finite joins and satisfies
  the distributivity law
  \begin{equation}
    \label{e:dist}
    \bigvee_{x'\in S}(x\wedge x')
    =
    x\wedge\bigvee_{x'\in S}x'
  \end{equation}
  for any element \(x\in P\) and any finite set \(S\subset P\).
  A morphism between distributive lattices is a semilattice morphism
  that preserves finite joins.
  By replacing “finite” with “directed” and “arbitrary”
  in this definition,
  we obtain the notions of a \emph{preframe} and a \emph{frame},
  respectively.
  We write
  \(\Cat{DLat}\), \(\Cat{PFrm}\), and \(\Cat{Frm}\)
  for the category of
  distributive lattices,
  preframes, and
  frames, respectively.
  We have an equality \(\Cat{Frm}=\Cat{DLat}\cap\Cat{PFrm}\)
  of subcategories of the category of posets \(\Cat{Poset}\).
\end{definition}

\begin{example}\label{d24c046ed8}
  For any topological space,
  the poset of open subsets is a frame.
  Moreover, any continuous map
  between topological spaces 
  determines a frame morphism
  in the opposite direction.
  The category of sober topological spaces
  is fully faithfully embedded into
  the category of \emph{locales} \(\Cat{Loc}\),
  which is defined as the opposite of~\(\Cat{Frm}\).
  We refer the reader to~\cite[Part~II]{Johnstone82}
  for an introductory account of locale theory.
\end{example}

We then recall the following notation from \(\infty\)-category theory:

\begin{definition}\label{99dd5910bf}
  Let \(\cls{K}\) be a large collection of small \(\infty\)-categories.
  We write \(\widehat{\Cat{Cat}}(\cls{K})\)
  for the \(\infty\)-category
  of large \(\infty\)-categories having \(\cat{I}\)-indexed colimits for any \(\cat{I}\in\cls{K}\)
  and functors preserving \(\cat{I}\)-indexed colimits for any \(\cat{I}\in\cls{K}\);
  see~\cite[Section~5.3.6]{LurieHTT}.
  Moreover,
  there is a standard symmetric monoidal structure on it
  such that
  a morphism \(\cat{C}_1\otimes\dotsb\otimes\cat{C}_n\to\cat{C}'\)
  corresponds to
  a functor \(\cat{C}_1\times\dotsb\times\cat{C}_n\to\cat{C}'\)
  that preserves \(\cat{I}\)-indexed colimits in each variable for each \(\cat{I}\in\cls{K}\);
  see~\cite[Section~4.8.1]{LurieHA}.
\end{definition}

\begin{example}\label{f0f8a8e97f}
  Concretely,
  an object of
  \(\CAlg(\widehat{\Cat{Cat}}(\cls{K}))\) is
  a symmetric monoidal \(\infty\)-category
  such that for each \(\cat{I}\in\cls{K}\),
  its underlying \(\infty\)-category has \(\cat{I}\)-indexed colimits
  and
  its tensor operations preserve \(\cat{I}\)-indexed colimits in each variable.
  A morphism in \(\CAlg(\widehat{\Cat{Cat}}(\cls{K}))\)
  is a symmetric monoidal functor
  preserving \(\cat{I}\)-indexed colimits for any \(\cat{I}\in\cls{K}\).
\end{example}

We come to the main result of this section:

\begin{theorem}\label{49a2f304a6}
  We consider the composite
  \begin{equation}
    \label{e:45e1ca27}
    \phantom,
    \CAlg\bigl(\widehat{\Cat{Cat}}(\cls{K})\bigr)
    \to
    \CAlg\bigl(\widehat{\Cat{Cat}}\bigr)
    \xrightarrow{\cIdem}
    \widehat{\Cat{Poset}}
    ,
  \end{equation}
  where the first arrow is induced by
  the (lax symmetric monoidal) forgetful functor
  \(\widehat{\Cat{Cat}}(\cls{K})\to\widehat{\Cat{Cat}}\).
  \begin{enumerate}
    \item\label{i:slat}
      In general, \cref{e:45e1ca27} lands in \(\widehat{\Cat{SLat}}\).
    \item\label{i:dlat}
      If \(\cls{K}\) contains all finite \(\infty\)-categories,
      \cref{e:45e1ca27} lands in \(\widehat{\Cat{DLat}}\).
    \item\label{i:pfrm}
      If \(\cls{K}\) contains all filtered \(\infty\)-categories,
      \cref{e:45e1ca27} lands in \(\widehat{\Cat{PFrm}}\).
    \item\label{i:frm}
      If \(\cls{K}\) contains all small \(\infty\)-categories,
      \cref{e:45e1ca27} lands in \(\widehat{\Cat{Frm}}\).
  \end{enumerate}
\end{theorem}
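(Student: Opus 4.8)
The plan is to identify every lattice operation on $\cIdem(\cat{C})$ with a tensor product or a colimit in $\cat{C}$, and then read off each assertion from the single fact that a morphism in $\CAlg(\widehat{\Cat{Cat}}(\cls{K}))$ is symmetric monoidal and preserves $\cls{K}$-indexed colimits. First I would dispose of \cref{i:slat}, which needs no hypothesis on $\cls{K}$. Since $\cIdem(\cat{C})=\Idem(\cat{C}^{\op})^{\op}$ is a poset by \cref{e16029d69e}, it is enough to produce binary meets and a top element; dualizing the fact that joins in $\Idem$ are given by $\otimes$ (the tensor product being the coproduct of commutative algebras, under which idempotent algebras are stable), one finds
\[
C\wedge C'=C\otimes C',\qquad \top=\unit,
\]
so that the order is characterized by $C\le C'\iff C\otimes C'\simeq C$. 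A symmetric monoidal functor preserves $\otimes$ and $\unit$, hence preserves these meets, which settles \cref{i:slat} with its functoriality.

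Next I would realize the joins as colimits and test them against this order characterization. For \cref{i:dlat} the bottom element is the initial object $\emptyset$ (coidempotent since $\emptyset\otimes\X\simeq\emptyset$), and the binary join is the pushout
\[
C_1\vee C_2=C_1\sqcup_{C_1\otimes C_2}C_2,
\]
whose two legs are $\id_{C_1}\otimes c_2$ and $c_1\otimes\id_{C_2}$. Because $\otimes$ preserves finite colimits in each variable, tensoring this square with $C_1$ turns one leg into an equivalence and collapses the pushout to give $C_1\otimes(C_1\vee C_2)\simeq C_1$; the same computation yields $(C_1\vee C_2)^{\otimes 2}\simeq C_1\vee C_2$, so the pushout is coidempotent and is an upper bound, and tensoring with an arbitrary upper bound $D$ shows it is the least one. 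Distributivity $C\wedge(C_1\vee C_2)\simeq(C\wedge C_1)\vee(C\wedge C_2)$ is then immediate from commuting $C\otimes\X$ past the pushout. For \cref{i:pfrm} the directed join of a filtered family $\{C_i\}$ is the filtered colimit $\injlim_i C_i$; coidempotency follows from $\otimes$ commuting with filtered colimits together with cofinality of the diagonal $I\to I\times I$, and the universal property and distributivity are verified exactly as above. Finally \cref{i:frm} is obtained by combining the two: as $\Cat{Frm}=\Cat{DLat}\cap\Cat{PFrm}$ and all small $\infty$-categories contain both the finite and the filtered ones, every small join is the directed join of the finite joins of its finite subfamilies.

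The main obstacle is the coidempotency check for the binary join, namely that $C_1\sqcup_{C_1\otimes C_2}C_2$ is again coidempotent. The delicate point is to track the two legs of the span precisely enough that, after tensoring with one factor, the relevant map becomes an equivalence and the pushout collapses; this is exactly where the hypothesis that $\otimes$ preserves finite colimits in each variable is indispensable. Once this verification (and the parallel cofinality input in the filtered case) is in hand, functoriality becomes formal: a morphism in $\CAlg(\widehat{\Cat{Cat}}(\cls{K}))$ preserves $\otimes$, $\unit$, and the $\cls{K}$-indexed colimits used to build the joins, hence preserves precisely the structure present in each of \cref{i:slat,i:dlat,i:pfrm,i:frm}.
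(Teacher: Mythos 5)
Your proposal is correct and follows essentially the same route as the paper's proof: meets are tensor products with top element \(\unit\), the bottom element is the initial object, binary joins are pushouts over \(C_1\otimes C_2\), directed joins are filtered colimits (using cofinality of the diagonal), coidempotency is checked by tensoring the defining colimit diagrams, and functoriality is formal since only tensor operations and \(\cls{K}\)-indexed colimits are used. The only cosmetic differences are that you verify leastness of the join via the order characterization \(C\le C'\iff C\otimes C'\simeq C\) where the paper appeals to the pushout's universal property, and that the paper tracks the structure maps to \(\unit\) by working in \(\Fun(\Delta^1,\cat{C})\) — exactly the bookkeeping you flag as the delicate point.
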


\begin{proof}
  We first prove~\cref{i:slat}.
  Let \(\cat{C}\) be a symmetric monoidal \(\infty\)-category.
  First, \({\id}\colon\unit\to\unit\) is
  the largest element of \(\cIdem(\cat{C})\),
  since it has a morphism from any coidempotent object.
  Let \(c\colon C\to\unit\) and \(c'\colon C'\to\unit\) be two coidempotent objects.
  Then \(c\otimes c'
  \colon C\otimes C'\to\unit\otimes\unit\simeq\unit\)
  is a coidempotent object as
  \(
    (C\otimes C')\otimes(c\otimes c')
    \simeq
    (C\otimes c)\otimes(C'\otimes c)
  \)
  is an equivalence.
  By construction,
  any symmetric monoidal functor
  determines a semilattice morphism.

  We then prove~\cref{i:dlat}.
  Let \(\cat{C}\) be an object of \(\CAlg(\widehat{\Cat{Cat}}(\cat{K}))\).
  First, let \(\emptyset\) denote the initial object.
  By assumption,
  \(\emptyset\otimes C\) is also initial for any object \(C\in\cat{C}\).
  Hence
  the morphism from the initial object \(\emptyset\to\unit\)
  is a coidempotent object
  and
  is furthermore the least element of \(\cIdem(\cat{C})\)
  by the description of finite meets in \(\cIdem(\cat{C})\) we have obtained above.
  We also have the distributive law~\cref{e:dist}
  for \(S=\emptyset\).
Next, we construct binary joins.
  Let \(c\colon C\to\unit\) and \(c'\colon C'\to\unit\) be two coidempotent objects.
  We consider the diagram
  \begin{equation}
    \label{e:join}
    \begin{tikzcd}
      C\otimes C'\ar[r,"C\otimes c'"]\ar[d,"c\otimes C'"']&
      C\ar[d,"c"]\\
      C'\ar[r,"c'"]&
      \unit\rlap,
    \end{tikzcd}
  \end{equation}
  where we implicitly identify
  \(C\otimes\unit\) and \(\unit\otimes C'\)
  with~\(C\) and~\(C'\), respectively.
  We let \(c\vee c'\colon C\vee C'\to\unit\)
  denote the morphism obtained
  from this diagram
  by taking a pushout.
  We wish to show that \(c\vee c'\) is a coidempotent object;
  i.e.,
  the morphism \((C\vee C')\otimes(c\vee c')\)
  is an equivalence.
  By the definition of \(C\vee C'\)
  and the assumption on~\(\cat{C}\),
  this fits into a cartesian diagram
  \begin{equation*}
    \begin{tikzcd}
      (C\otimes C')\otimes(c\vee c')\ar[r]\ar[d]&
      C\otimes(c\vee c')\ar[d]\\
      C'\otimes(c\vee c')\ar[r]&
      (C\vee C')\otimes(c\vee c')
    \end{tikzcd}
  \end{equation*}
  in \(\Fun(\Delta^1,\cat{C})\).
  Hence it suffices to show that
  the other three vertices are equivalences.
  By symmetry
  and
  \(
  (C\otimes C')\otimes(c\vee c')
  \simeq
  C'\otimes(C\otimes(c\vee c'))
  \),
  it suffices to show that
  \(C\otimes(c\vee c')
  \colon
  C\otimes(C\vee C')
  \to
  C\otimes\unit\simeq C\) is an equivalence.
  Equivalently, it suffices to show that
  \(C\otimes\text{\cref{e:join}}\) is a pushout square,
  but it is clear as \(c\) is a coidempotent object.
  Therefore,
  we have seen that \(c\vee c'\)
  is a coidempotent object.
  Also,
  \(c\vee c'\) is a join of~\(c\) and~\(c'\)
  by construction.
We then wish to check the distributive law \cref{e:dist}
  when \(S\) consists of two elements.
  Let
  \(c''\colon C''\to\unit\) be a coidempotent object
  and we wish to show that
  the canonical morphism
  \begin{equation*}
    (c\otimes c'')
    \vee
    (c'\otimes c'')
    \to
    (c\vee c')\otimes c''
  \end{equation*}
  in \(\cIdem(\cat{C})\)
  is an equivalence,
  but this follows from
  the observation that
  \begin{equation}
    \label{e:0e6ad386}
    \begin{tikzcd}
      C\otimes C'\ar[r,"C\otimes c'"]\ar[d,"c\otimes C'"']&
      C\ar[d,"c"]\\
      C'\ar[r,"c'"]&
      C\vee C'
    \end{tikzcd}
  \end{equation}
  remains a pushout square after
  applying \(\X\otimes C''\),
  which can be seen from the assumption on~\(\cat{C}\).
  The functoriality follows from
  the observation that
  this construction of finite joins only uses tensor operations and finite colimits,
  which are preserved by morphisms in \(\CAlg(\widehat{\Cat{Cat}}(\cls{K}))\).

  We then check \cref{i:pfrm}.
  Consider a directed family \(c_I\colon C_I\to\unit\)
  indexed by~\(\cat{I}\),
  which is in~\(\cls{K}\) by assumption.
  We write \(c\colon C\to\unit\) for its colimits
  taken in \(\Fun(\Delta^1,\cat{C})\),
  where we use the directedness of~\(\cat{I}\)
  to identify its target with~\(\unit\).
  Since
  the diagonal \(\cat{I}\to\cat{I}\times\cat{I}\) is cofinal,
  we see that
  \begin{equation*}
    C\otimes c
    \simeq
    \biggl(\injlim_{I\in\cat{I}}C_I\biggr)
    \otimes
    \biggl(\injlim_{I'\in\cat{I}}c_{I'}\biggr)
    \simeq
    \injlim_{(I,I')\in\cat{I}\times\cat{I}}
    (C_I\otimes c_{I'})
    \simeq
    \injlim_{I\in\cat{I}}
    (C_I\otimes c_I)
  \end{equation*}
  is an equivalence.
  Hence \(c\) is a coidempotent object.
  The distributivity for directed subsets
  and
  the functoriality
  are clear by assumption.

  Finally, \cref{i:frm} follows from~\cref{i:dlat} and~\cref{i:pfrm}.
\end{proof}

\subsection{Smallness}\label{ss:small}

By~\cref{i:frm} of \cref{49a2f304a6},
we have a functor
\begin{equation*}
  {\cIdem}\colon
  \CAlg(\Cat{Pr})\to\widehat{\Cat{Frm}}.
\end{equation*}
In this subsection,
we see that this actually factors through \(\Cat{Frm}\),
the category of small frames.

\begin{remark}\label{e9e6e9cbb3}
  Much work has been done on the smallness
  of a certain class of localizations in an \(\infty\)-category
  with various accessibility conditions;
  see, e.g.,~\cite{CasacubertaGutierrezRosicky14} and references therein.
  However, these results do not apply to our current situation
  (except for the stable case)
  as we are looking at colocalizations instead.
\end{remark}

\begin{theorem}\label{small}
  Let \(\cat{C}\) be a symmetric monoidal \(\infty\)-category.
  If
  \(\cat{C}\) is accessible
  and
  \({\otimes}\colon\cat{C}\times\cat{C}\to\cat{C}\) is accessible,
  then \(\cIdem(\cat{C})\) is essentially small.
\end{theorem}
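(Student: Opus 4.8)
The plan is to realize \(\cIdem(\cat{C})\) as an accessible subcategory of the slice \(\cat{C}_{/\unit}\) and then to convert accessibility into essential smallness using that \(\cIdem(\cat{C})\) is merely a poset. First I would fix a regular cardinal~\(\kappa\) for which \(\cat{C}\) is \(\kappa\)-accessible and the tensor product \({\otimes}\colon\cat{C}\times\cat{C}\to\cat{C}\) preserves \(\kappa\)-filtered colimits; both can be arranged simultaneously by the two accessibility hypotheses. Then the slice \(\cat{C}_{/\unit}\) and the functor \(\infty\)-category \(\Fun(\Delta^1,\cat{C})\) are again accessible (see \cite[\S5.4]{LurieHTT}).

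Next I would encode coidempotency as an equivalence locus of accessible functors on \(\cat{C}_{/\unit}\). Consider the functor \(\Phi\colon\cat{C}_{/\unit}\to\Fun(\Delta^1,\cat{C})\) sending \((c\colon C\to\unit)\) to \((C\otimes c\colon C\otimes C\to C\otimes\unit\simeq C)\). It is assembled from the domain functor \(\cat{C}_{/\unit}\to\cat{C}\), the tautological embedding \(\cat{C}_{/\unit}\hookrightarrow\Fun(\Delta^1,\cat{C})\), and the tensor product; each of these preserves \(\kappa\)-filtered colimits, so \(\Phi\) is accessible. This is precisely where accessibility of \({\otimes}\) enters. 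By \cref{859fe7c317}, the full subcategory \(\cIdem(\cat{C})\subseteq\cat{C}_{/\unit}\) consists exactly of those \(c\) for which \(\Phi(c)\) is an equivalence.

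Writing \(\Cat{Eq}\subseteq\Fun(\Delta^1,\cat{C})\) for the full subcategory of equivalences, which is accessible and accessibly embedded since it is closed under all colimits and is equivalent to \(\cat{C}\) via the domain functor, I would identify \(\cIdem(\cat{C})\) with the pullback \(\cat{C}_{/\unit}\times_{\Fun(\Delta^1,\cat{C})}\Cat{Eq}\) along \(\Phi\). Since accessible \(\infty\)-categories are stable under pullback along accessible functors (see \cite[\S5.4]{LurieHTT}), this shows that \(\cIdem(\cat{C})\) is accessible.

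Finally I would upgrade accessibility to essential smallness using that \(\cIdem(\cat{C})=\Idem(\cat{C}^{\op})^{\op}\) is a poset (\cref{e16029d69e} applied to~\(\cat{C}^{\op}\)). Choosing \(\lambda\) with \(\cIdem(\cat{C})\simeq\Ind_\lambda(\cat{P}_0)\), where \(\cat{P}_0\) is the essentially small \(\infty\)-category of \(\lambda\)-compact objects, one notes that \(\cat{P}_0\), being a full subcategory of a poset, is again a poset. Every object~\(X\) is therefore a \(\lambda\)-directed join of \(\lambda\)-compact objects, so it is recovered as \(\injlim\{p\in\cat{P}_0:p\le X\}\) and is determined by the downward-closed subset \(\{p\le X\}\subseteq\cat{P}_0\); hence the objects inject into the power set of~\(\cat{P}_0\) and \(\cIdem(\cat{C})\) is essentially small. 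I expect the crux to be the middle steps, namely verifying that coidempotency cuts out an accessible subcategory, which hinges on the accessibility of~\(\Phi\) (and thus of~\({\otimes}\)). It is worth stressing that the closing poset-to-smallness shortcut is exactly what lets us avoid analyzing the associated smashing colocalizations or their colocal subcategories: as \cref{e9e6e9cbb3} warns, the standard accessibility techniques for reflective localizations do not transpose to colocalizations, and it is the decision to work inside the slice \(\cat{C}_{/\unit}\) rather than an endofunctor \(\infty\)-category that makes the argument go through.
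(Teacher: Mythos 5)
Your proof is correct and takes essentially the same approach as the paper: the paper likewise exhibits \(\cIdem(\cat{C})\) as an accessible pullback of \(\cat{C}_{/\unit}\to\Fun(\Delta^1,\cat{C})\) against the diagonal \(\cat{C}\to\Fun(\Delta^1,\cat{C})\) (which agrees with your pullback along \(\Cat{Eq}\hookrightarrow\Fun(\Delta^1,\cat{C})\)), invoking \cite[Proposition~5.4.6.6]{LurieHTT} together with the accessibility of \({\otimes}\). Your closing step is also the paper's \cref{94a0eb29dd}: an accessible large poset is of the form \(\Ind_{\kappa}(P)\) for a small poset~\(P\) and hence embeds into the poset of subposets of~\(P\), so it is essentially small.
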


\begin{remark}\label{d322807483}
  In general,
  since filtered \(\infty\)-categories are sifted,
  a functor of the form \(\cat{C}\times\cat{D}\to\cat{E}\)
  that preserves filtered colimits in each variable
  preserves filtered colimits.
  Hence
  the condition of \cref{small} is satisfied
  for an object of \(\CAlg(\Cat{Pr})\).
\end{remark}

\Cref{small} is deduced from the following two lemmas:

\begin{lemma}\label{e9559e4a68}
  In the situation of \cref{small},
  \(\cIdem(\cat{C})\) is accessible.
\end{lemma}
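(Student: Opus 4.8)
The plan is to realize \(\cIdem(\cat{C})\) as the full subcategory of a slice of \(\cat{C}\) on which a natural transformation between accessible functors becomes an equivalence, and then to invoke the standard stability properties of accessible \(\infty\)-categories. Recall that \(\cIdem(\cat{C})\) is the full subcategory of \(\cat{C}_{/\unit}\) spanned by the coidempotent objects, i.e., by those \(c\colon C\to\unit\) for which \(\id_C\otimes c\colon C\otimes C\to C\otimes\unit\simeq C\) is an equivalence. Since \(\cat{C}\) is accessible, the slice \(\cat{C}_{/\unit}\) is accessible by \cite[Proposition~5.4.6.6]{LurieHTT}, and the functor \(\infty\)-category \(\Fun(\Delta^1,\cat{C})\) is accessible as well (see \cite[\S5.4]{LurieHTT}).

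Next I would produce the relevant natural transformation. Let \(Q\colon\cat{C}_{/\unit}\to\cat{C}\) be the functor \((c\colon C\to\unit)\mapsto C\) remembering the source; this is the restriction of the forgetful functor and is accessible, since the projection from a slice preserves connected, hence filtered, colimits. Let \(P\) be the composite \(\cat{C}_{/\unit}\xrightarrow{Q}\cat{C}\xrightarrow{\Delta}\cat{C}\times\cat{C}\xrightarrow{\otimes}\cat{C}\), so that \(P(c)\simeq C\otimes C\). The diagonal \(\Delta\) preserves all colimits and \(\otimes\) is accessible by hypothesis, so \(P\) is accessible as a composite of accessible functors. The morphisms \(\id_C\otimes c\) assemble into a natural transformation \(\beta\colon P\to Q\), and by construction \(c\) is coidempotent if and only if \(\beta_c\) is an equivalence.

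I would then package \(\beta\) as a single functor \(H\colon\cat{C}_{/\unit}\to\Fun(\Delta^1,\cat{C})\) classifying it. Since the two evaluation functors \(\Fun(\Delta^1,\cat{C})\to\cat{C}\) preserve all colimits and \(\mathrm{ev}_0\circ H\simeq P\), \(\mathrm{ev}_1\circ H\simeq Q\) are accessible, the functor \(H\) preserves \(\kappa\)-filtered colimits for \(\kappa\) large and is therefore accessible. Writing \(\delta\colon\cat{C}\to\Fun(\Delta^1,\cat{C})\) for the constant-diagram functor, which is accessible and fully faithful with essential image exactly the subcategory of equivalences, the object \(c\) is coidempotent precisely when \(H(c)\) lies in the essential image of \(\delta\). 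Hence \(\cIdem(\cat{C})\) is identified with the fiber product
\begin{equation*}
  \cIdem(\cat{C})\simeq\cat{C}_{/\unit}\times_{\Fun(\Delta^1,\cat{C})}\cat{C}
\end{equation*}
formed along \(H\) and \(\delta\).

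Finally, I would conclude by the stability of accessible \(\infty\)-categories and accessible functors under pullback in \(\widehat{\Cat{Cat}}\), a standard consequence of the closure of accessible \(\infty\)-categories under small limits \cite[\S5.4]{LurieHTT}: both legs of the above cospan are accessible functors between accessible \(\infty\)-categories, so the fiber product is accessible. The one place where the hypotheses genuinely enter is the accessibility of \(P\), which uses that both \(\cat{C}\) and \(\otimes\) are accessible; everything else is formal bookkeeping with slices, functor categories, and the pullback-stability of accessibility, so I expect that invocation to be the only non-routine input.
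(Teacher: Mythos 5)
Your proposal is correct and is essentially the paper's own proof: both identify \(\cIdem(\cat{C})\) with the pullback \(\cat{C}_{/\unit}\times_{\Fun(\Delta^1,\cat{C})}\cat{C}\) of the diagonal (constant-diagram) functor along the functor sending \(c\colon C\to\unit\) to the morphism \(C\otimes c\), check that the slice, the functor category, and both legs of the cospan are accessible (choosing \(\kappa\) so that \(\cat{C}\) has and \(\otimes\) preserves \(\kappa\)-filtered colimits), and conclude by the closure of accessible \(\infty\)-categories and accessible functors under pullbacks, per \cite[Proposition~5.4.6.6]{LurieHTT}. The only differences are cosmetic, namely your more explicit verification via the evaluation functors that the classifying functor \(H\) is accessible.
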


\begin{proof}
We first observe that
  \(\cIdem(\cat{C})\) is a pullback
  \begin{equation*}
    \begin{tikzcd}
      \cIdem(\cat{C})\ar[r]\ar[d]&
      \cat{C}_{/\unit}\ar[d,"(C\to\unit)\mapsto(C\to C\otimes C)"]\\
      \cat{C}\ar[r,"\textnormal{diagonal}"]&
      \Fun(\Delta^1,\cat{C})
    \end{tikzcd}
  \end{equation*}
  in the \(\infty\)-category of large \(\infty\)-categories.
  Therefore, by~\cite[Proposition~5.4.6.6]{LurieHTT},
  we need to see
  that the cospan belongs to the \(\infty\)-category
  of accessible \(\infty\)-categories and accessible functors.
  First,
  \(\cat{C}_{/\unit}\)
  and
  \(\Fun(\Delta^1,\cat{C})\)
  are accessible according to~\cite[Corollary~5.4.6.7 and Proposition~5.4.4.3]{LurieHTT},
  respectively.
  (This can also be checked directly.)
  We then choose an infinite regular cardinal~\(\kappa\)
  such that
  \(\cat{C}\) has \(\kappa\)-filtered colimits
  and
  \({\otimes}\colon\cat{C}\times\cat{C}\to\cat{C}\)
  preserves \(\kappa\)-filtered colimits.
  Then both right and bottom functors
  preserve \(\kappa\)-filtered colimits
  and are thus accessible.
\end{proof}

\begin{lemma}\label{94a0eb29dd}
  A large poset that is accessible (when regarded as an \(\infty\)-category)
  is essentially small.
\end{lemma}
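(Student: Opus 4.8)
The plan is to exploit accessibility in order to reconstruct every element of the poset as a join of a small family of ``compact'' elements, and then to embed the whole poset into the power set of one fixed small set. Let \(P\) be our large accessible poset, regarded as an \(\infty\)-category. First I would fix a regular cardinal~\(\kappa\) witnessing accessibility, so that \(P\) admits \(\kappa\)-filtered colimits and \(P\simeq\Ind_{\kappa}(P^{0})\) for a small \(\infty\)-category~\(P^{0}\); cf.~\cite[Section~5.4.2]{LurieHTT}. We may arrange that \(P^{0}\) consists of \(\kappa\)-compact objects, and then every object of \(P\) is a \(\kappa\)-filtered colimit of objects of~\(P^{0}\). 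Since we are working inside a poset, such a colimit is nothing but the join of a \(\kappa\)-directed family, and every member of that family lies below the join.

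Next I would turn this representation into an injection. For \(x\in P\), set \(S_{x}=\{c\in P^{0}:c\leq x\}\). The claim is that \(x=\bigvee S_{x}\), i.e.\ that \(x\) is the least upper bound of~\(S_{x}\). Indeed, writing \(x=\bigvee_{i}c_{i}\) as above with each \(c_{i}\in P^{0}\) and \(c_{i}\leq x\), every \(c_{i}\) lies in~\(S_{x}\), so any upper bound~\(u\) of \(S_{x}\) satisfies \(u\geq c_{i}\) for all~\(i\) and hence \(u\geq\bigvee_{i}c_{i}=x\); conversely \(x\) is itself an upper bound of~\(S_{x}\) by definition. Consequently the assignment \(x\mapsto S_{x}\) is injective on equivalence classes: in a poset, equivalent objects are equal, and \(S_{x}=S_{y}\) forces \(x=\bigvee S_{x}=\bigvee S_{y}=y\). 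Since \(P^{0}\) is small, its collection of subsets is a small set, so \(x\mapsto S_{x}\) exhibits the objects of~\(P\), up to equivalence, as a subset of a small set; this is exactly the assertion that \(P\) is essentially small.

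The only genuinely nontrivial input is the representation of an arbitrary element as a \(\kappa\)-filtered join of elements of a fixed small generating family~\(P^{0}\) of \(\kappa\)-compact objects, which I would extract from the standard theory of \(\Ind_{\kappa}\)-completions rather than reprove. Everything after that is a purely order-theoretic manipulation of joins, and no completeness of~\(P\) is needed: we only ever use \(\kappa\)-directed joins, which accessibility already supplies. I therefore expect the argument to be short, with the ``hard part'' being merely the careful bookkeeping of the accessible-category facts, not the essential-smallness conclusion itself.
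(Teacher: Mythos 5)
Your proof is correct and takes essentially the same approach as the paper: the paper writes the accessible poset as \(\Ind_{\kappa}(P^{0})\) for a small poset~\(P^{0}\) and identifies it with a full subposet of the poset of subsets of~\(P^{0}\), and your assignment \(x\mapsto S_{x}=\{c\in P^{0}:c\leq x\}\) is exactly that identification, made explicit. The only difference is presentational: you verify injectivity by hand via the join formula \(x=\bigvee S_{x}\), where the paper simply cites the standard description of \(\Ind_{\kappa}\) of a poset.
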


\begin{proof}
  In this proof, by “poset” we mean large poset.
  By definition,
  an accessible poset
  is isomorphic to the poset of the form \(\Ind_{\kappa}(P)\)
  for
  an infinite regular cardinal~\(\kappa\)
  and
  a small poset~\(P\).
  As \(\Ind_{\kappa}(P)\) can be identified
  with a full subposet of the poset of full subposets of~\(P\),
  it is essentially small.
\end{proof}

Given \cref{i:frm} of \cref{49a2f304a6},
\cref{small}, and \cref{d322807483},
now
we can introduce the main notion of this paper:

\begin{definition}\label{56fda69d7e}
  The \emph{unstable smashing spectrum} functor is the composite
  \begin{equation*}
    {\Sm}\colon
    \CAlg(\Cat{Pr})\xrightarrow{\cIdem}\Cat{Frm}\xrightarrow{=}\Cat{Loc}^{\op}.
  \end{equation*}
\end{definition}

\begin{remark}\label{f83636aa2e}
  Note that the word “smashing” in the name is still justified
  as its opens correspond to smashing colocalizations.
\end{remark}

\subsection{Sheaves on a locale}\label{ss:shv_loc}

In this section,
we prove the following fact,
which is an ingredient of the proof of the main theorem:

\begin{theorem}\label{ee1f8d640f}
  Let \(X\) be a locale.
  We write \(F\) for its underlying frame.
  For an \(\infty\)-category having limits~\(\cat{C}\),
  a presheaf \(\shf{F}\colon F^{\op}\to\cat{C}\) is a 
  sheaf if and only if
  it satisfies
  the following conditions:
  \begin{enumerate}
    \item\label{i:red}
      The value \(\shf{F}(\bot)\) is the final object.
    \item\label{i:exc}
      For any opens~\(V\) and~\(V'\),
      the square
      \begin{equation*}
        \begin{tikzcd}
          \shf{F}(V\vee V')\ar[r]\ar[d]&
          \shf{F}(V)\ar[d]\\
          \shf{F}(V')\ar[r]&
          \shf{F}(V\wedge V')
        \end{tikzcd}
      \end{equation*}
      is cartesian.
    \item\label{i:fil}
      The canonical morphism
      \begin{equation*}
        \shf{F}\Bigl(\bigvee D\Bigr)
        \to
        \projlim_{U\in D}
        \shf{F}(U)
      \end{equation*}
      is an equivalence
      for any directed subset~\(D\subset F\).
  \end{enumerate}
\end{theorem}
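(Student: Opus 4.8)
The plan is to work throughout with the formulation of the sheaf condition on the site given by the frame~\(F\) with its canonical topology, in which a sieve \(S\subseteq F_{/U}\) covers~\(U\) exactly when \(\bigvee_{V\in S}V=U\): a presheaf \(\shf{F}\) is a sheaf precisely when, for every~\(U\) and every covering sieve~\(S\), the canonical map \(\shf{F}(U)\to\lim_{V\in S}\shf{F}(V)\) is an equivalence. Both implications then reduce to comparing such sieve-limits with the simpler limits appearing in \cref{i:red,i:exc,i:fil}, and every comparison I need is a cofinality computation in the sense of Quillen's Theorem~A.

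For necessity I would specialize the sieve condition three times. The empty sieve covers~\(\bot\) and its limit is the terminal object, giving~\cref{i:red}. For \(U=V\vee V'\) the generated sieve contains the three-element subposet \(\{V,V',V\wedge V'\}\), which I would check is initial: for any~\(W\) in the sieve the elements of \(\{V,V',V\wedge V'\}\) lying above~\(W\) form either a single point or a poset with least element \(V\wedge V'\), hence a weakly contractible one, so the sieve-limit collapses to the pullback of~\cref{i:exc}. For a directed family~\(D\) with join~\(U\) the generated sieve is \(\downarrow D\), and the inclusion \(D\hookrightarrow\downarrow D\) is initial because for each \(V\in\downarrow D\) the set \(\{U'\in D:U'\geq V\}\) is directed, hence weakly contractible; this yields~\cref{i:fil}.

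For sufficiency I would assume \cref{i:red,i:exc,i:fil} and establish the sieve condition for an arbitrary covering sieve~\(S\) of~\(U\) in two stages. First, using \cref{i:red,i:exc} together with the distributivity of~\(F\), I would prove by induction on~\(n\) that \(\shf{F}\) satisfies descent for the sieve generated by any finite family \(\{V_1,\dotsc,V_n\}\): splitting off~\(V_n\) reduces the claim to the pullback~\cref{i:exc} for \(V_1\vee\dotsb\vee V_{n-1}\) and~\(V_n\), whose meet is covered by \(\{V_i\wedge V_n\}_{i<n}\) by distributivity, so the inductive hypothesis applies to both legs; a routine reindexing of the poset of nonempty subsets of \(\{1,\dotsc,n\}\) assembles these into the total finite-meet limit, which a cofinality argument (the finite meets \(V_J\) being initial in the finite sieve) identifies with the sieve-limit. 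Second, writing \(U_J=\bigvee_{V\in J}V\) for finite \(J\subseteq S\), the family \(\{U_J\}\) is directed with join~\(U\), so \cref{i:fil} gives \(\shf{F}(U)\simeq\projlim_J\shf{F}(U_J)\); finite descent rewrites each \(\shf{F}(U_J)\) as the limit over the finite subsieve \(S_J=\bigcup_{V\in J}\downarrow V\subseteq S\); and since \(S=\bigcup_J S_J\) is a directed union, a final cofinality argument identifies \(\projlim_J\lim_{S_J}\shf{F}\) with \(\lim_S\shf{F}\), completing the proof.

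The main obstacle is this last cofinality step. Unlike the necessity computations, the comparison \(\projlim_J\lim_{S_J}\shf{F}\to\lim_S\shf{F}\) arises from a functor out of the Grothendieck construction \(\int_J S_J\to S\) whose relevant comma categories are \emph{not} filtered, so I cannot conclude contractibility directly. I expect to establish it indirectly, by projecting each comma category onto the interval \(\{W\in S:W\geq W_0\}\)—which is contractible, having least element~\(W_0\)—and checking that the fibers of this projection are directed, so that Quillen's Theorem~A applies. Keeping the variances straight across all of these cofinality arguments, and verifying that finite joins, finite meets, and the down-closed sieve structure interact as the comparisons require, is where the care lies; everything else is bookkeeping.
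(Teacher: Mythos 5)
Your proposal is correct and follows essentially the same route as the paper: reduce the sheaf condition to descent for downward-closed subsets, prove necessity by the same cofinality computations (the three-element subposet for binary joins, and the initiality of a directed family in the sieve it generates), and prove sufficiency by induction on finite generating sets using distributivity and the pullback square, followed by passage to arbitrary sieves as directed unions of finitely generated subsieves. The only real divergence is the final step you flag as the main obstacle: where you invoke Quillen's Theorem~A for the projection from the Grothendieck construction (a valid but laborious argument, since that projection restricted to each comma category is a cocartesian fibration with codirected fibers over a poset with a least element), the paper treats the same comparison as the essentially formal fact that a limit over a filtered union of full subposets decomposes as a cofiltered limit of the limits over the pieces.
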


We need several small lemmas to prove this theorem:

\begin{lemma}\label{85ede2d6c6}
  In the situation of \cref{ee1f8d640f},
  a presheaf \(\shf{F}\) is a sheaf
  if and only if
  the canonical morphism
  \begin{equation*}
    \label{e:21165d07}
    \shf{F}\Bigl(\bigvee R\Bigr)
    \to
    \projlim_{U\in R}
    \shf{F}(U)
  \end{equation*}
  is an equivalence
  for any downward-closed subset \(R\subset F\).
\end{lemma}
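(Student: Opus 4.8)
The plan is to recognize \cref{85ede2d6c6} as a direct reformulation of the definition of a sheaf, once we make explicit the Grothendieck topology attached to the locale~$X$. Regarding the frame~$F$ as a poset category, the associated (canonical) topology has, as covering sieves on an open~$W\in F$, exactly the downward-closed subsets $R\subseteq\{U\in F:U\le W\}$ with $\bigvee R=W$; see~\cite[Part~II]{Johnstone82}. By definition $\Shv(X;\cat{C})$ is the full subcategory of $\cat{C}$-valued presheaves satisfying descent for this topology. First I would unwind this descent condition into the assertion that, for every open~$W$ and every covering sieve~$R$ on~$W$, the canonical map $\shf{F}(W)\to\projlim_{U\in R}\shf{F}(U)$ is an equivalence. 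For space-valued presheaves this is the standard translation of ``being local with respect to the inclusion of~$R$ into the representable presheaf on~$W$'' into the limit formula, using that this inclusion exhibits the subpresheaf as the colimit of the representables indexed by~$R$; for general~$\cat{C}$ it then follows by testing against the space-valued presheaves $\Map_{\cat{C}}(c,\shf{F}(\X))$, since $\Map_{\cat{C}}(c,\X)$ preserves limits and the family over all $c\in\cat{C}$ jointly detects equivalences.

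The one genuine bookkeeping step is the identification of downward-closed subsets of~$F$ with covering sieves of opens. On the one hand, an arbitrary downward-closed subset $R\subseteq F$ lies in the principal ideal of $W\coloneqq\bigvee R$, hence is a sieve on~$W$, and it covers~$W$ precisely because $\bigvee R=W$. On the other hand, a covering sieve on a prescribed open~$W$ is just a downward-closed subset of~$F$ whose join equals~$W$. Thus $R\mapsto\bigl(\bigvee R,\,R\bigr)$ is a bijection between downward-closed subsets of~$F$ and pairs of an open together with a covering sieve on it, and under it the descent condition for~$R$ becomes exactly $\shf{F}(\bigvee R)\to\projlim_{U\in R}\shf{F}(U)$, the morphism in the statement.

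Given these two observations, both implications are immediate. If $\shf{F}$ is a sheaf, then applying descent to the covering sieve~$R$ on~$\bigvee R$ yields the stated equivalence for every downward-closed~$R$; conversely, every covering sieve on every~$W$ is downward-closed with join~$W$, so the hypothesized equivalences reassemble into the full descent condition. I do not expect a real obstacle at this stage: the only care needed is in matching the paper's definition of a $\cat{C}$-valued sheaf with the covering-sieve formulation and in confirming that the canonical topology on~$F$ has the covering sieves described above, both of which are standard. In particular, no cofinality or smallness considerations intervene here; those are deferred to the reduction of this criterion to the three conditions of \cref{ee1f8d640f}.
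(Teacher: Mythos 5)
Your proposal is correct and takes essentially the same approach as the paper: the paper's entire proof is the observation that, since the overcategory \(F_{/V}\) of a poset is just the full subposet \(F_{\leq V}\), covering sieves on an open \(V\) are exactly the downward-closed subsets of \(F\) with join \(V\), which is precisely your ``bookkeeping step,'' after which the descent condition is literally the stated limit condition. Your additional unwinding of the \(\cat{C}\)-valued sheaf condition by testing against \(\Map_{\cat{C}}(c,\X)\) is sound but is left implicit in the paper.
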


\begin{proof}
  This follows from the observation
  that
  for any \(V\in F\),
  the overcategory
  \(F_{/V}\) is just the full subposet \(F_{\leq V}\subset F\)
  and hence
  a covering sieve on~\(V\)
  is just a downward-closed subset of~\(F\)
  whose join is~\(V\).
\end{proof}

\begin{lemma}\label{ee6e60039b}
  Let \(P\) be a poset
  having directed joins
  and \(\cat{C}\) an \(\infty\)-category
  having filtered colimits.
  Then
  a functor
  \(F\colon P\to\cat{C}\)
  preserves filtered colimits
  if and only if
  the canonical morphism
  \begin{equation*}
    \injlim_{p\in D}F(p)
    \to
    F\Bigl(\bigvee D\Bigr)
  \end{equation*}
  is an equivalence
  for any directed subset \(D\subset P\).
\end{lemma}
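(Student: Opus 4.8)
The plan is to prove both implications by reducing every filtered colimit in $P$ to a directed join. The crucial preliminary observation is that a filtered diagram $p\colon\cat{I}\to P$ has image a \emph{directed} subset $D\subset P$: given $i,j\in\cat{I}$, filteredness of $\cat{I}$ produces some $k$ receiving maps from both, so $p(k)$ is an upper bound of $p(i),p(j)$ lying in $D$. Moreover, since $P$ is a poset, the colimit of $p$ computed in $P$ is precisely the least upper bound $\bigvee D$, which exists by hypothesis. Thus $P$ has all filtered colimits, and "preserving filtered colimits" becomes a condition we may test diagram by diagram.

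For the forward implication, suppose $F$ preserves filtered colimits and let $D\subset P$ be directed. As an $\infty$-category $D$ is filtered (a nonempty poset in which every finite subset has an upper bound has filtered nerve), and the inclusion $D\hookrightarrow P$ is a filtered diagram whose colimit in $P$ is $\bigvee D$. Applying $F$ and invoking preservation of this colimit shows that the canonical comparison map $\injlim_{p\in D}F(p)\to F(\bigvee D)$ is an equivalence, as required.

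For the converse, assume the comparison map is an equivalence for every directed subset, and fix an arbitrary filtered diagram $p\colon\cat{I}\to P$ with image $D$. By the first paragraph $\injlim_{\cat{I}}p\simeq\bigvee D$ in $P$, and by hypothesis $F(\bigvee D)\simeq\injlim_{d\in D}F(d)$. It therefore suffices to identify $\injlim_{\cat{I}}(F\circ p)$ with $\injlim_{d\in D}F(d)$, i.e.\ to show that the corestriction $p\colon\cat{I}\to D$ onto its image is \emph{cofinal}. By the cofinality criterion \cite[Theorem~4.1.3.1]{LurieHTT} this amounts to showing that for each $d\in D$ the fiber product $\cat{I}\times_D D_{d/}$ is weakly contractible; since $D$ is a poset the coslice projection $D_{d/}\to D$ is the inclusion of the up-set $\{d'\geq d\}$, so this fiber product is simply the full subcategory $\cat{I}_{\geq d}\subset\cat{I}$ spanned by the objects $i$ with $p(i)\geq d$.

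The main obstacle is verifying that each $\cat{I}_{\geq d}$ is weakly contractible, which I would do by showing it is filtered (filtered $\infty$-categories being weakly contractible). Nonemptiness and pairwise upper bounds in $\cat{I}_{\geq d}$ are inherited from $\cat{I}$ using that $d$ lies in the image; the delicate point is the cocone-extension property for a finite diagram $K\to\cat{I}_{\geq d}$. Here I would first extend to a cocone in $\cat{I}$ using filteredness of $\cat{I}$, obtaining some cone point $c$, and then correct it: choosing $i_0$ with $p(i_0)=d$ and applying filteredness to $c$ and $i_0$ yields some $c'$ with a map $c\to c'$ and $p(c')\geq d$, and pushing the cocone forward along $c\to c'$ (using that the coslice projection is a left fibration) produces a cocone whose cone point lies in $\cat{I}_{\geq d}$. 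Since $\cat{I}_{\geq d}$ is a full subcategory and every vertex of this cocone lies in it, the whole cocone factors through $\cat{I}_{\geq d}$. This closes the argument; the only genuine care is needed in the correction step, which is precisely where the directed structure of the image—rather than mere essential surjectivity of $p$—is exploited.
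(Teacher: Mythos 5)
Your proof is correct, and it takes a genuinely different route from the paper's at the one nontrivial step. Both arguments share the same skeleton: the ``only if'' direction is the observation that a directed subset \(D\), viewed via its inclusion as a filtered diagram in \(P\), has colimit \(\bigvee D\); and the ``if'' direction reduces to showing that the corestriction \(\cat{I}\to D\) onto the (directed) image is cofinal, verified through Joyal's criterion \cite[Theorem~4.1.3.1]{LurieHTT} applied to the comma categories \(\cat{I}\times_DD_{d/}\). The divergence is in how weak contractibility of these comma categories is established. The paper first invokes \cite[Proposition~5.3.1.18]{LurieHTT} to replace \(\cat{I}\) by a directed \emph{poset} admitting a cofinal map to it; after that reduction each comma category is itself a directed poset (nonempty because \(d\) lies in the image, and closed under the upper bounds supplied by directedness), hence weakly contractible, and the proof ends in one line. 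You skip this reduction and instead prove directly, for a general filtered \(\cat{I}\), that each full subcategory \(\cat{I}_{\geq d}\) is filtered: extend a finite diagram to a cocone in \(\cat{I}\), then correct the cone point \(c\) to some \(c'\) receiving maps from both \(c\) and a preimage \(i_0\) of \(d\), transport the cocone along \(c\to c'\) using the left-fibration property of the coslice projection, and conclude by fullness since all vertices of the new cocone lie in \(\cat{I}_{\geq d}\). This correction step is sound, and it is exactly the point where your argument does the work that the paper outsources to the cofinal-approximation theorem. What each approach buys: the paper's is shorter but leans on a genuinely hard combinatorial result of \cite{LurieHTT}, while yours is essentially self-contained, using only Joyal's criterion, basic left-fibration lifting, and the weak contractibility of filtered \(\infty\)-categories, at the cost of a more hands-on verification.
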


\begin{proof}
  Since the “only if” direction is obvious,
  we prove the “if” direction.
  Consider an arbitrary
  filtered diagram
  \(\cat{I}\to P\).
  We wish to show that
  \(F\) preserves the colimit of this diagram.
  By~\cite[Proposition~5.3.1.18]{LurieHTT},
  we can assume that \(\cat{I}\) is just a directed poset.
  Let \(D\) be its essential image.
  Since \(\cat{I}\) is directed,
  so is \(D\).
  Then we claim that \(\cat{I}\to D\) is cofinal.
  To prove this,
  we need to show that
  \(\cat{I}\times_{D}D_{p/}\) is weakly contractible
  for any \(p\in D\),
  but it is directed.
  Therefore,
  \(F\) preserves the colimit of this diagram
  by assumption.
\end{proof}

In the following lemma,
recall that
\(\Lambda_0^2\) denotes (the nerve of) the poset
\(\{1\gets0\to2\}\):

\begin{lemma}\label{exc_two}
  Let \(P\) be a poset
  and \(p\) and \(p'\in P\)
  (not necessarily distinct) elements having meet \(p\wedge p'\).
  Suppose that
  \(P=P_{\leq p}\cup P_{\leq p'}\) holds.
  Then the functor
  \(F\colon\Lambda_0^2\to P\)
  determined by
  \begin{align*}
    0&\mapsto p\wedge p',&
    1&\mapsto p,&
    2&\mapsto p'
  \end{align*}
  is cofinal.
\end{lemma}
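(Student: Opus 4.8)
The plan is to apply the cofinality criterion of \cite[Theorem~4.1.3.1]{LurieHTT}, exactly as in the proof of \cref{ee6e60039b}: it suffices to show that the comma \(\infty\)-category \(\Lambda_0^2\times_P P_{q/}\) is weakly contractible for every \(q\in P\). Since \(P\) is a poset, so is \(P_{q/}\), and there is at most one morphism between any two objects; consequently this fiber product is identified with the full subposet of \(\Lambda_0^2\) spanned by those objects \(i\) for which \(q\leq F(i)\). Concretely, the objects \(0\), \(1\), and \(2\) survive precisely when \(q\leq p\wedge p'\), \(q\leq p\), and \(q\leq p'\), respectively.

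I would then argue by cases according to which of the inequalities \(q\leq p\) and \(q\leq p'\) hold; the hypothesis \(P=P_{\leq p}\cup P_{\leq p'}\) guarantees that at least one of them does, so the fiber product is never empty. If both hold, then \(q\) is a lower bound of \(\{p,p'\}\), hence \(q\leq p\wedge p'\) by the defining property of the meet; thus all three objects survive and the fiber product is all of \(\Lambda_0^2\), which is weakly contractible because it has the initial object~\(0\). If exactly one holds, say \(q\leq p\) and \(q\not\leq p'\), then object \(2\) is absent, and object \(0\) is also absent, since \(q\leq p\wedge p'\) would force \(q\leq p'\); the fiber product is therefore the single object \(\{1\}\), which is contractible. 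The remaining case \(q\leq p'\), \(q\not\leq p\) is symmetric.

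Since the comma category is weakly contractible in every case, \(F\) is cofinal. The only delicate point---and the precise place where the meet hypothesis is used---is the situation in which both \(1\) and \(2\) survive: a priori the fiber product could then be the disconnected poset \(\{1,2\}\), which is \emph{not} weakly contractible. The existence of \(p\wedge p'\) as a genuine greatest lower bound is exactly what forces the apex \(0\) to survive as well, turning this span into a connected (indeed contractible) diagram. The covering hypothesis \(P=P_{\leq p}\cup P_{\leq p'}\) plays the complementary role of excluding the empty fiber.
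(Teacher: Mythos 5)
Your proof is correct and follows essentially the same route as the paper: both verify Joyal's cofinality criterion by identifying the comma category \(\Lambda_0^2\times_P P_{q/}\) with the full subposet \(\{n\in\Lambda_0^2\mid q\leq F(n)\}\) and running the same case analysis on which of \(q\leq p\), \(q\leq p'\) hold (the paper merely organizes the cases by first using the covering hypothesis to assume \(q\leq p\) without loss of generality). Your closing remark about why the surviving subposet can never be the disconnected span \(\{1,2\}\) is exactly the point of the meet hypothesis, and matches the paper's reasoning.
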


\begin{proof}
  We need to see that for any \(q\in P\),
  the full subposet
  \(I=\{n\in\Lambda_0^2\mid F(n)\geq q\}\)
  is weakly contractible.
  As \(P=P_{\leq p}\cup P_{\leq p'}\),
  we assume \(q\leq p\).
  When \(q\leq p'\),
  we have \(q\leq p\wedge p'\) and
  thus \(I\) is \(\Lambda_0^2\).
  When \(q\nleq p'\),
  we have \(q\nleq p\wedge p'\) and 
  thus \(I\) is \(\{1\}\simeq{*}\).
  Both are weakly contractible.
\end{proof}

In the proof,
we use the following notation
used in~\cite[Section~5.1]{verdier-asc}:

\begin{definition}\label{e5d27e0fc4}
  For a poset~\(P\),
  let \(\Down(P)\) denote
  the poset of downward-closed subsets (aka sieves).
  We write \(\Down^{\fin}(P)\) for
  the full subposet spanned by
  finitely generated downward-closed subsets;
  i.e., \(R\in\Down(P)\) belongs to \(\Down^{\fin}(P)\)
  if and only if there is a finite subset \(S\subset P\)
  such that \(R\) is the downward closure of~\(S\).
\end{definition}

\begin{proof}[Proof of \cref{ee1f8d640f}]
  First suppose that \(\shf{F}\) is a sheaf.
  Then \cref{i:red}
  and \cref{i:fil} follow from \cref{85ede2d6c6}
  by considering the cases
  \(R=\emptyset\) and~\(R=D\), respectively.
  We prove \cref{i:exc} by taking
  \(R\) to be the downward closure of \(\{V,V'\}\)
  in \cref{85ede2d6c6}.
  Since
  the functor \(\Lambda_0^2\to R\)
  determined by \(0\mapsto V\wedge V'\),
  \(1\mapsto V\), and~\(2\mapsto V'\)
  is cofinal
  by \cref{exc_two},
  the desired claim follows.

  Then we prove the converse.
  Let \(\shf{F}\colon F^{\op}\to\cat{C}\)
  be a functor satisfying the conditions
  listed in the statement.
  First,
  note that
  \cref{i:fil} implies that
  \(\shf{F}\) preserves cofiltered limits
  by \cref{ee6e60039b}.
  Let \(R\subset F\) be an arbitrary 
  downward-closed subset.
  We consider the diagram
  \begin{equation*}
    \begin{tikzcd}
      \shf{F}\bigl(\bigvee R\bigr)\ar[r]\ar[d]&
      \projlim_{U\in R}
      \shf{F}(U)\ar[d]\\
      \projlim_{R'\in\Down^{\fin}(R)}
      \shf{F}\bigl(\bigvee R'\bigr)\ar[r]&
      \projlim_{R'\in\Down^{\fin}(R)}
      \projlim_{U\in R'}
      \shf{F}(U)
    \end{tikzcd}
  \end{equation*}
  in~\(\cat{C}\).
  By \cref{85ede2d6c6},
  it suffices to show that the top arrow is an equivalence
  to conclude the proof.
  The vertical arrows are equivalences,
  since \(\shf{F}\) preserves cofiltered limits.
  Hence it remains to show that
  the bottom arrow is an equivalence.

Hence
  we are reduced to showing that for any finite subset~\(S\subset F\),
  the canonical morphism
  \(\shf{F}\bigl(\bigvee S\bigr)
  =\shf{F}\bigl(\bigvee R\bigr)
  \to\projlim_{U\in R}\shf{F}(U)\) is an equivalence,
  where \(R\) is the downward closure of~\(S\).
  We prove this claim by induction on~\(\#S\).
  When \(\#S=0\), the claim is \cref{i:red}.
  We assume \(\#S\geq1\).
  We pick an element \(V\in S\)
  and consider
  \(S'=S\setminus\{V\}\)
  and
  \(S''=\{V\wedge U\mid U\in S'\}\).
  We write \(R\), \(R'\), and~\(R''\)
  for the downward closures of \(S\), \(S'\), and~\(S''\),
  respectively.
  Then we consider the diagram
  \begin{equation*}
    \begin{tikzcd}[row sep=tiny,column sep=tiny]
      \shf{F}\bigl(\bigvee S\bigr)\ar[rr]\ar[dr]\ar[dd]&
      {}&
      \projlim_{U\in R}\shf{F}(U)\ar[dd]\ar[dr]&
      {}\\
      {}&
      \shf{F}(V)\ar[rr,crossing over]&
      {}&
      \projlim_{U\leq V}\shf{F}(U)\ar[dd]\\
      \shf{F}\bigl(\bigvee S'\bigr)\ar[rr]\ar[dr]&
      {}&
      \projlim_{U\in R'}\shf{F}(U)\ar[dr]&
      {}\\
      {}&
      \shf{F}\bigl(\bigvee S''\bigr)\ar[rr]\ar[uu,<-,crossing over]&
      {}&
      \projlim_{U\in R''}\shf{F}(U)
    \end{tikzcd}
  \end{equation*}
  in~\(\cat{C}\).
  By inductive hypothesis and
  \(\#S''\leq\#S'<\#S\),
  the horizontal arrows are equivalences except the top one.
  By distributivity,
  we have
  \begin{equation*}
    V\wedge\Bigl(\bigvee S'\Bigr)
    =
    V\wedge\biggl(\bigvee_{U\in S'}U\biggr)
    =
    \bigvee_{U\in S}(V\wedge U)
    =
    \bigvee S''
  \end{equation*}
  and hence the left square is cartesian by \cref{i:exc}.
  By applying~\cite[Proposition~4.4.2.2]{LurieHTT} to
  the decomposition
  \(R=
  \{U\in F\mid U\leq V\}
  \amalg_{R''}R'\),
  we know that
  the right square is also cartesian.
  Therefore, the top horizontal arrow
  is written as a limit of the other horizontal arrows
  and hence is an equivalence.
\end{proof}

\begin{remark}\label{f31b7b3f6f}
  The final part of the proof shows that
  a presheaf on a distributive lattice is a sheaf
  for the finite cover topology
  if and only if
  it satisfies \cref{i:red} and~\cref{i:exc}.
  Via Stone duality between
  distributive lattices and
  coherent topological spaces (aka spectral spaces),
  this variant is related to~\cite[Theorem~7.3.5.2]{LurieHTT}.
\end{remark}

\subsection{The sheaves--spectrum adjunction}\label{ss:adj}

We finally construct
the sheaves–spectrum adjunction (\cref{main}) in this section.
We first give its toy version,
which gives an external characterization of \(\cIdem\).
To state it,
we note
the following:

\begin{lemma}\label{08a2ef6f35}
  By regarding a semilattice as
  a cartesian symmetric monoidal \(\infty\)-category,
  we have a fully faithful embedding
  \(\iota\colon\Cat{SLat}\hookrightarrow\CAlg(\Cat{Cat})\).
\end{lemma}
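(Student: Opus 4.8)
The plan is to realize $\iota$ through the theory of cartesian symmetric monoidal structures and then to reduce full faithfulness to the fact that mapping spaces between posets are discrete. A semilattice $P$, viewed as a poset, admits finite products in the categorical sense: the empty product is the top element, and the product of $x,y\in P$ is the meet $x\wedge y$. Hence $P$ lies in the domain of the cartesian structure functor $(\X)^{\times}$ of \cite[Section~2.4.1]{LurieHA}, and $\iota$ is by definition the restriction of $(\X)^{\times}$ along the inclusion of $\Cat{SLat}$ into the $\infty$-category of $\infty$-categories admitting finite products. For this to make sense on morphisms, note that an order-preserving map preserves finite products exactly when it preserves the top element and binary meets, i.e., when it is a semilattice morphism; here preservation of meets is a \emph{property} of the underlying order-preserving map, since meets are determined by a universal property.

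To prove full faithfulness I would compute the mapping space $\Map_{\CAlg(\Cat{Cat})}(\iota P,\iota Q)$ for semilattices $P,Q$. The key input is the comparison for cartesian structures \cite[Section~2.4.1]{LurieHA}: the forgetful functor from symmetric monoidal functors $P^{\times}\to Q^{\times}$ to all functors $P\to Q$ is fully faithful with essential image the finite-product-preserving functors. Passing to maximal subgroupoids, the induced map
\[
  \Map_{\CAlg(\Cat{Cat})}(P^{\times},Q^{\times})\to\Map_{\Cat{Cat}}(P,Q)
\]
is an equivalence onto the union of those components spanned by product-preserving functors. Now since $P$ and $Q$ are posets, the functor $\infty$-category $\Fun(P,Q)$ is again a poset, so its core $\Map_{\Cat{Cat}}(P,Q)$ is the discrete set of order-preserving maps $P\to Q$. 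Intersecting with the product-preserving condition identifies the right-hand side with the set of semilattice morphisms, i.e., with the discrete space $\Map_{\Cat{SLat}}(P,Q)$. Combining, $\Map_{\CAlg(\Cat{Cat})}(\iota P,\iota Q)\simeq\Map_{\Cat{SLat}}(P,Q)$, and naturality of the cartesian-structure comparison shows this equivalence is the map induced by $\iota$; hence $\iota$ is fully faithful.

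I expect no serious obstacle here: the only substantive ingredient is the cartesian-structure comparison imported from \cite{LurieHA}, after which the argument reduces to the elementary observation that the core of a poset is discrete. The one point demanding a little care is the bookkeeping that ``preserving finite meets'' for a semilattice morphism matches ``preserving finite products'' for the associated functor---in particular that the empty meet (the top element) is the categorical terminal object---so that the product-preserving components of $\Map_{\Cat{Cat}}(P,Q)$ are precisely the semilattice morphisms and nothing is lost or gained in the identification.
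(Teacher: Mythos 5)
Your proof is correct and takes essentially the same route as the paper: the paper's entire proof is a one-line appeal to \cite[Corollary~2.4.1.9]{LurieHA}, which is exactly the cartesian-structure comparison you invoke. The extra bookkeeping you supply (meets are products, the top element is terminal, and mapping spaces between posets are discrete, so the product-preserving components are precisely the semilattice morphisms) is just the unwinding that makes that citation ``immediate.''
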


\begin{proof}
  This is an immediate consequence
  of~\cite[Corollary~2.4.1.9]{LurieHA}.
\end{proof}

\begin{proposition}\label{adj}
  The functor~\(\iota\) in \cref{08a2ef6f35} is
  left adjoint to
  \(\cIdem\)
  given by (the obvious variant of)~\cref{i:slat}
  of \cref{49a2f304a6}:
  \begin{equation*}
    \begin{tikzcd}
      \Cat{SLat}\ar[r,shift left,hook,"\iota"]&
      \CAlg(\Cat{Cat})\rlap.\ar[l,shift left,"\cIdem"]
    \end{tikzcd}
  \end{equation*}
\end{proposition}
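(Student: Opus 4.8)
The plan is to deduce the adjunction by producing an explicit counit and checking that it induces a natural equivalence of mapping spaces; since \(\iota\) is fully faithful by \cref{08a2ef6f35}, the corresponding unit will then automatically be an equivalence, so it suffices to exhibit, for each symmetric monoidal \(\infty\)-category \(\cat{C}\), a counit \(\epsilon_{\cat{C}}\colon\iota(\cIdem(\cat{C}))\to\cat{C}\) inducing a natural equivalence \(\Map_{\Cat{SLat}}(P,\cIdem(\cat{C}))\xrightarrow{\sim}\Map_{\CAlg(\Cat{Cat})}(\iota(P),\cat{C})\) for every semilattice \(P\). As a preliminary I would record the unit: by \cref{cart_sm} the poset \(\cIdem(\iota(P))\) is the poset of subterminal objects of \(P\), and since every morphism in a poset is a monomorphism, every object is subterminal, so the comparison \(P\to\cIdem(\iota(P))\) is an isomorphism of semilattices—consistent with, and forced by, full faithfulness of \(\iota\).

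To build the counit I would pass through coalgebras, using the identification \(\cIdem(\cat{C})=\cCAlg_{\idem}(\cat{C})\) from \cref{ss:c}. The key structural input is that \(\cCAlg(\cat{C})\) carries a cartesian symmetric monoidal structure whose product is computed by the tensor product inherited from \(\cat{C}\)—this is dual to the computation of coproducts in \(\CAlg(\cat{C})\) as tensor products—so that the forgetful functor \(\cCAlg(\cat{C})\to\cat{C}\) is symmetric monoidal. The full subcategory \(\cCAlg_{\idem}(\cat{C})\) contains the unit and is closed under this product (the tensor of two idempotent coalgebras is again idempotent, exactly as in the proof of \cref{i:slat} of \cref{49a2f304a6}), hence inherits a symmetric monoidal structure; being a poset, that structure is precisely \(\iota(\cIdem(\cat{C}))\). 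I then set \(\epsilon_{\cat{C}}\) to be the symmetric monoidal composite \(\iota(\cIdem(\cat{C}))\simeq\cCAlg_{\idem}(\cat{C})\hookrightarrow\cCAlg(\cat{C})\to\cat{C}\), which on objects sends a coidempotent \(C\to\unit\) to \(C\).

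Finally I would establish the comparison as a two-step natural chain \(\Map_{\CAlg(\Cat{Cat})}(\iota(P),\cat{C})\simeq\{\text{product-preserving }\iota(P)\to\cCAlg(\cat{C})\}\simeq\Map_{\Cat{SLat}}(P,\cIdem(\cat{C}))\). The second step is elementary: a product-preserving functor \(G\) out of the poset \(\iota(P)\) sends each \(p\) (with \(p\wedge p=p\)) to an object with \(G(p)\simeq G(p)\otimes G(p)\), i.e. an idempotent coalgebra, so \(G\) factors through the poset \(\cIdem(\cat{C})\) as a meet-preserving map, and conversely. The first step is the main obstacle: it is the universal property of the cartesian structure, namely that a symmetric monoidal functor \(F\) out of the cartesian monoidal \(\iota(P)\) preserves cocommutative comonoids, and since every object of a cartesian monoidal category is canonically such a comonoid, \(F\) lifts essentially uniquely to a product-preserving functor into \(\cCAlg(\cat{C})\). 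Concretely the lift equips \(F(p)\) with the structure map \(F(p)\to F(\top)\simeq\unit\), and the image of the projection \(p\wedge p\to p\)—an isomorphism in \(P\)—is the equivalence witnessing coidempotency; the content I must pin down is the uniqueness and coherence of this lift, for which I would invoke the dual of the cartesian-structure universal property of \cite[Section~2.4.1]{LurieHA}. Composing the two steps with \(\epsilon_{\cat{C}}\) yields the asserted adjunction, naturally in \(P\) and \(\cat{C}\).
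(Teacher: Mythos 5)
Your argument is correct, but it takes a genuinely different route from the paper's proof. The shared core is that a symmetric monoidal functor \(F\colon\iota(P)\to\cat{C}\) must factor through the coidempotents, and the only real difficulty in the unstable setting is producing the structure maps \(F(p)\to\unit\). The paper solves this by first passing to the coreflective subcategory \(\CAlg(\Cat{Cat})'\subset\CAlg(\Cat{Cat})\) of symmetric monoidal \(\infty\)-categories with final unit, via the slice coreflection \(\cat{C}\mapsto\cat{C}_{/\unit}\) (\cref{2ed3114c1a}) together with the fact that this coreflection does not change \(\cIdem\) (\cref{572f1e9d7a}); once the unit is final, \(\iota(\cIdem(\cat{C}))\hookrightarrow\cat{C}\) is a full symmetric monoidal subcategory (\cref{cart_uf}) and the maps to the unit are automatic, so the factorization is immediate. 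You instead keep \(\cat{C}\) as it is and route everything through \(\cCAlg(\cat{C})\) with its pointwise-tensor (cartesian) structure: your counit \(\iota(\cIdem(\cat{C}))\simeq\cCAlg_{\idem}(\cat{C})\hookrightarrow\cCAlg(\cat{C})\to\cat{C}\) is sound, and the crux is the claim that a symmetric monoidal functor out of the cartesian monoidal \(\iota(P)\) lifts essentially uniquely to a product-preserving functor into \(\cCAlg(\cat{C})\). That lifting step---exactly the one you flag---is genuine content, and the reference you give is not quite the right one: \cite[Section~2.4.1]{LurieHA} only identifies symmetric monoidal functors \emph{between} cartesian structures with product-preserving functors (as in \cref{08a2ef6f35}); what you need is the dual of the universal property of cocartesian structures, \cite[Theorem~2.4.3.18]{LurieHA} (oplax symmetric monoidal functors out of a cartesian structure correspond to functors into \(\cCAlg\)), combined with conservativity of the forgetful functor \(\cCAlg(\cat{C})\to\cat{C}\) to match strong monoidal functors with product-preserving lifts. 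With that reference corrected, your proof closes, and the two approaches trade off as follows: the paper's slice trick keeps the operadic input minimal, whereas your route is more modular and conceptual---it exhibits \(\cCAlg(\X)\) with its cartesian structure as the coreflection of symmetric monoidal \(\infty\)-categories into cartesian monoidal ones, so that \(\iota\dashv\cIdem\) factors as a composite of two adjunctions, and it makes structurally transparent why coalgebras rather than algebras govern the unstable theory.
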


We need some lemmas:

\begin{lemma}\label{2ed3114c1a}
  Let \(\CAlg(\Cat{Cat})'\) be
  the full subcategory of \(\CAlg(\Cat{Cat})\)
  spanned by symmetric monoidal \(\infty\)-categories
  whose units are final.
  Then this is a coreflective subcategory;
  i.e., the inclusion has a right adjoint.
  The coreflector is objectwise given by
  \(\cat{C}\mapsto\cat{C}_{/\unit}\).
\end{lemma}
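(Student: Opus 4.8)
The plan is to exhibit the candidate coreflector as a pullback in $\CAlg(\Cat{Cat})$ and then reduce the adjunction to a terminality statement forced by the finality of the unit. Concretely, I would define a functor $R\colon\CAlg(\Cat{Cat})\to\CAlg(\Cat{Cat})$ sending $\cat{C}$ to the pullback
\[
  R(\cat{C})=\Fun(\Delta^1,\cat{C})\times_{\cat{C}}\ast,
\]
formed along the symmetric monoidal evaluation $\mathrm{ev}_1\colon\Fun(\Delta^1,\cat{C})\to\cat{C}$ and the unit $\unit\colon\ast\to\cat{C}$, where $\Fun(\Delta^1,\cat{C})$ carries the pointwise symmetric monoidal structure and $\ast$ is the zero object of $\CAlg(\Cat{Cat})$ (the trivial symmetric monoidal $\infty$-category). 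Since the forgetful functor $\CAlg(\Cat{Cat})\to\Cat{Cat}$ creates limits, the underlying $\infty$-category of $R(\cat{C})$ is the slice $\cat{C}_{/\unit}$, its tensor product is $(C\to\unit)\otimes(C'\to\unit)\simeq(C\otimes C'\to\unit)$, and its unit is $\id_\unit$, which is final in $\cat{C}_{/\unit}$. Hence $R$ factors through $\CAlg(\Cat{Cat})'$, and $\mathrm{ev}_0$ provides a natural symmetric monoidal functor $q_{\cat{C}}\colon\cat{C}_{/\unit}\to\cat{C}$, my candidate counit.

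To see that $q$ exhibits $R$ as right adjoint to the inclusion $i$, it suffices, by the counit characterization of adjoints, to check that for every $\cat{D}\in\CAlg(\Cat{Cat})'$ postcomposition with $q_{\cat{C}}$ is an equivalence on mapping spaces, naturally in $\cat{C}$. Writing $\Fun^{\otimes}$ for the $\infty$-category of symmetric monoidal functors and monoidal natural transformations, I would apply $\Fun^{\otimes}(\cat{D},\X)$ to the pullback; using the cotensor equivalence $\Fun^{\otimes}(\cat{D},\Fun(\Delta^1,\cat{C}))\simeq\Fun(\Delta^1,\Fun^{\otimes}(\cat{D},\cat{C}))$ and $\Fun^{\otimes}(\cat{D},\ast)\simeq\ast$, this yields
\[
  \Fun^{\otimes}(\cat{D},\cat{C}_{/\unit})\simeq\Fun^{\otimes}(\cat{D},\cat{C})_{/\mathrm{const}_{\unit}},
\]
the slice over the constant-unit functor, with $q_{\cat{C}}$ inducing the source projection. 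This projection is an equivalence exactly when $\mathrm{const}_{\unit}$ is a terminal object of $\Fun^{\otimes}(\cat{D},\cat{C})$; passing to maximal subgroupoids then gives the required equivalence of mapping spaces.

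It thus remains to prove this terminality, which is where the hypothesis on $\cat{D}$ enters. Let $p\colon\cat{D}\to\ast$ be the canonical functor and $e\colon\ast\to\cat{D}$ the unit, picking out $\unit_{\cat{D}}$. Finality of $\unit_{\cat{D}}$ says precisely that $p\dashv e$ as plain functors; moreover the unit $\id_{\cat{D}}\Rightarrow ep=\mathrm{const}_{\unit_{\cat{D}}}$ is a monoidal natural transformation, since its component at $D\otimes D'$ and the tensor of its components at $D$ and $D'$ are both the unique map to the final object $\unit_{\cat{D}}$. Hence $p\dashv e$ is an adjunction internal to the $(\infty,2)$-category $\CAlg(\Cat{Cat})$. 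Applying the contravariant hom-$2$-functor $\Fun^{\otimes}(\X,\cat{C})$ turns it into an adjunction $e^{*}\dashv p^{*}$, so that $p^{*}\colon\Fun^{\otimes}(\ast,\cat{C})\to\Fun^{\otimes}(\cat{D},\cat{C})$ is a right adjoint and preserves terminal objects. Since $\Fun^{\otimes}(\ast,\cat{C})\simeq\ast$ has terminal object the unit functor $\unit_{\cat{C}}$, its image $p^{*}(\unit_{\cat{C}})=\mathrm{const}_{\unit}$ is terminal, as desired.

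I expect the main obstacle to be this internal-adjunction step: one must upgrade the elementary finality of $\unit_{\cat{D}}$ to a genuine adjunction $p\dashv e$ in $\CAlg(\Cat{Cat})$ regarded as an $(\infty,2)$-category — that is, verify the monoidal coherence of the unit $2$-cell and that precomposition is a contravariant $2$-functor — after which preservation of terminal objects by right adjoints closes the argument. By contrast, the identifications in the first two paragraphs (limits in $\CAlg(\Cat{Cat})$ computed underlying, the fat-versus-thin slice comparison, and the cotensor formula) are routine.
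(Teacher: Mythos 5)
Your overall architecture---realizing \(\cat{C}_{/\unit}\) as the pullback \(\Fun(\Delta^1,\cat{C})\times_{\cat{C}}\ast\) in \(\CAlg(\Cat{Cat})\), reducing the adjunction to terminality of \(\mathrm{const}_{\unit}\) in \(\Fun^{\otimes}(\cat{D},\cat{C})\), and attacking that via an adjunction \(p\dashv e\) internal to \(\CAlg(\Cat{Cat})\)---is sound and genuinely different from the paper's proof. But the step you yourself flag as the main obstacle is a real gap, and the argument you offer does not close it. A monoidal natural transformation \(\id_{\cat{D}}\Rightarrow ep\) is a morphism in \(\Fun^{\otimes}(\cat{D},\cat{D})\), i.e., a natural transformation between the underlying maps of \(\infty\)-operads \(\cat{D}^{\otimes}\to\cat{D}^{\otimes}\) over the base; in the \(\infty\)-categorical setting this is an infinite tower of coherence data (compatibility with all finite tensors, compositions, and symmetries), not a property of the underlying plain transformation. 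Your justification---that the component at \(D\otimes D'\) and the tensor of the components at \(D\) and \(D'\) are both ``the unique map to the final object''---verifies commutativity of the relevant squares up to unspecified homotopy, which is only the \(1\)-categorical shadow of the required data. Contractibility of the spaces \(\Map_{\cat{D}}(\X,\unit)\) makes the claim very plausible, but as written the crux of the lemma is asserted rather than proved; the same caveat applies, more mildly, to treating \(\Fun^{\otimes}(\X,\cat{C})\) as a contravariant \(2\)-functor.

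The gap is fillable, and your own pullback yields the cleanest repair---one that bypasses the internal adjunction entirely. Since limits in \(\CAlg(\Cat{Cat})\) are computed on underlying \(\infty\)-categories and the forgetful functor is conservative, for \(\cat{D}\in\CAlg(\Cat{Cat})'\) the counit \(q_{\cat{D}}=\mathrm{ev}_0\colon R(\cat{D})\to\cat{D}\) is an equivalence in \(\CAlg(\Cat{Cat})\): its underlying functor is the projection from a slice over a final object. For the same reason \(q_{R(\cat{C})}\) and \(R(q_{\cat{C}})\) are equivalences for every \(\cat{C}\), because \(R(\cat{C})\) always has final unit. The dual of the recognition criterion for localization functors \cite[Proposition~5.2.7.4]{LurieHTT} then shows that the copointed endofunctor \((R,q)\) is a colocalization onto its essential image, which is exactly \(\CAlg(\Cat{Cat})'\); this is the desired coreflection, with no coherence left to check by hand. (Alternatively, the monoidal \(2\)-cell you want can be obtained by restricting the tautological transformation \(\mathrm{ev}_0\Rightarrow\mathrm{ev}_1\)---which exists by the cotensor property you already invoke---along a symmetric monoidal inverse of \(q_{\cat{D}}\).) For comparison, the paper outsources all coherence to Lurie: pointed \(\infty\)-categories with final distinguished object are coreflective in pointed \(\infty\)-categories via \((\cat{C},C)\mapsto\cat{C}_{/C}\) \cite[Lemma~5.2.4.11]{LurieHA}; this coreflection is compatible with the cartesian symmetric monoidal structures and so passes through \(\CAlg\) \cite[Remark~2.2.1.5]{LurieHA}; and \(\CAlg(\Cat{Cat}_*)\simeq\CAlg(\Cat{Cat})\) since a commutative algebra is canonically pointed by its unit.
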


\begin{proof}
  Let \(\Cat{Cat}_*\) be the \(\infty\)-category
  of pointed \(\infty\)-categories
  and \(\Cat{Cat}_*'\) the full subcategory
  spanned by pointed \(\infty\)-categories
  whose distinguished objects are final.
  By~\cite[Lemma~5.2.4.11]{LurieHA},
  this is a coreflective subcategory
  and
  the coreflector is given by \((\cat{C},C)\mapsto\cat{C}_{/C}\).
  We have functors
  \(
    \Cat{Cat}_*'\to\Cat{Cat}_*\to\Cat{Cat}
  \)
  preserving finite limits.
  We consider the cartesian symmetric monoidal structures on them
  and apply \(\CAlg(\X)\).
  By~\cite[Remark~2.2.1.5]{LurieHA},
  the first functor induces a fully faithful inclusion
  having a right adjoint.
  The second functor induces an equivalence
  as already seen after applying \(\Alg_{\E_0}(\X)\).
  Therefore the composite \(\CAlg(\Cat{Cat}_*')\to\CAlg(\Cat{Cat})\)
  is fully faithful and the image is identified with
  \(\CAlg(\Cat{Cat})'\) and hence the desired claim follows.
\end{proof}

\begin{lemma}\label{572f1e9d7a}
  For a symmetric monoidal \(\infty\)-category~\(\cat{C}\),
  the counit \(\cat{C}_{/\unit}\to\cat{C}\)
  of the adjunction in \cref{2ed3114c1a}
  induces an equivalence on \(\cIdem\).
\end{lemma}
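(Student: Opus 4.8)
The statement to prove is that for a symmetric monoidal $\infty$-category~$\cat{C}$, the counit $\cat{C}_{/\unit}\to\cat{C}$ of the coreflection in \cref{2ed3114c1a} induces an equivalence on $\cIdem$. The key observation is that $\cIdem$ is computed entirely inside the overcategory $\cat{C}_{/\unit}$: by \cref{859fe7c317}, a coidempotent object is by definition an object $c\colon C\to\unit$ of $\cat{C}_{/\unit}$ satisfying an equivalence condition, and $\cIdem(\cat{C})$ is the full subcategory of $\cat{C}_{/\unit}$ cut out by that condition. So the plan is to show that the functor $\cIdem(\cat{C}_{/\unit})\to\cIdem(\cat{C})$ induced by the symmetric monoidal counit is an equivalence by directly comparing the two full subcategories of $\cat{C}_{/\unit}$ that they carve out.

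\textbf{The two descriptions.}
First I would unwind both sides. On the target, $\cIdem(\cat{C})$ is the full subcategory of $(\cat{C})_{/\unit}$ spanned by those $c\colon C\to\unit$ for which $C\otimes c\colon C\otimes C\to C$ is an equivalence, where $\otimes$ is the tensor product of~$\cat{C}$. On the source, since the unit of $\cat{C}_{/\unit}$ is $\id\colon\unit\to\unit$ (which is final in $\cat{C}_{/\unit}$), \cref{cart_uf} gives a fully faithful embedding $\cIdem(\cat{C}_{/\unit})\hookrightarrow\cat{C}_{/\unit}$, and $\cIdem(\cat{C}_{/\unit})$ is spanned by the coidempotent objects for the induced tensor product $\otimes'$ on $\cat{C}_{/\unit}$. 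The counit $\cat{C}_{/\unit}\to\cat{C}$ is the forgetful functor on underlying objects, and it is symmetric monoidal, so it carries $\otimes'$-coidempotents to $\otimes$-coidempotents; the content is that this assignment is essentially surjective and that the two coidempotency conditions agree on the nose.

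\textbf{Matching the tensor products.}
The crux is to identify the induced tensor product $\otimes'$ on $\cat{C}_{/\unit}$. For objects $c\colon C\to\unit$ and $c'\colon C'\to\unit$, the coreflection of \cref{2ed3114c1a} realizes $\cat{C}_{/\unit}$ as $\CAlg(\Cat{Cat})'$, whose symmetric monoidal structure is the one transported from the cartesian-final situation; concretely $c\otimes'c'$ is $C\otimes C'\xrightarrow{c\otimes c'}\unit\otimes\unit\simeq\unit$, exactly the binary meet computed in \cref{i:slat} of \cref{49a2f304a6}. Since the unit of $\otimes'$ is $\id_\unit$, the condition that $c$ be $\otimes'$-coidempotent—namely that $c\otimes'c\colon C\otimes'C\to C$ (after identifying $C\otimes'\unit\simeq C$) be an equivalence—is literally the condition that $C\otimes c\colon C\otimes C\to C$ be an equivalence, which is the defining condition for $c\in\cIdem(\cat{C})$. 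Hence the two full subcategories of $\cat{C}_{/\unit}$ coincide, and the counit restricts to the identity equivalence between them.

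\textbf{Expected obstacle.}
I expect the genuine work to lie not in the $\infty$-categorical bookkeeping but in being careful that the symmetric monoidal structure produced by \cref{2ed3114c1a} is really the one for which $c\otimes'c'$ is $(c\otimes c')\colon C\otimes C'\to\unit$, rather than some a priori different structure coming abstractly from the adjunction. The cleanest route is to note that both $\cat{C}_{/\unit}\to\cat{C}$ and the formation of $\cIdem$ only refer to the full subcategory inside $\cat{C}_{/\unit}$, so that one never has to compute $\otimes'$ in closed form: the forgetful functor is symmetric monoidal and faithful on objects, and an object of $\cat{C}_{/\unit}$ is $\otimes'$-coidempotent if and only if its image is $\otimes$-coidempotent, because the relevant structure maps and the identification $C\otimes\unit\simeq C$ are sent to their counterparts. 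This reduces the whole statement to the compatibility of the embeddings of \cref{cart_uf} with the forgetful functor, which is formal.
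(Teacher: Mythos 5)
Your proof is correct and matches the paper's approach: the paper's entire proof is ``This immediately follows from the definition of \(\cIdem\),'' and your argument simply spells out that immediacy --- both \(\cIdem(\cat{C}_{/\unit})\) and \(\cIdem(\cat{C})\) are full subcategories of \(\cat{C}_{/\unit}\) cut out by the same condition, since the symmetric monoidal counit reflects equivalences and sends the relevant structure maps to their counterparts. The only nitpick is the phrase ``faithful on objects,'' where what you actually use is that the projection \(\cat{C}_{/\unit}\to\cat{C}\) is conservative.
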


\begin{proof}
  This immediately follows from the definition of \(\cIdem\).
\end{proof}

\begin{proof}[Proof of \cref{adj}]
  We use the \(\infty\)-category \(\CAlg(\Cat{Cat})'\)
  in the statement of \cref{2ed3114c1a}.
  Note that \(\iota\)
  factors through \(\CAlg(\Cat{Cat})'\).
  By \cref{572f1e9d7a},
  we need to show that
  \(\Cat{SLat}\to\CAlg(\Cat{Cat})'\) is left adjoint
  to the restriction of \(\cIdem\)
  to \(\CAlg(\Cat{Cat})'\).

  By \cref{cart_uf}
  and the description of
  finite meets in \(\cIdem\)
  given in the proof of \cref{49a2f304a6},
  for \(\cat{C}\in\CAlg(\Cat{Cat})'\),
  we have a symmetric monoidal inclusion
  \(\iota(\cIdem(\cat{C}))\hookrightarrow\cat{C}\).
  It suffices to show that for any semilattice~\(P\),
  the induced morphism
  \begin{equation*}
    \Map_{\Cat{SLat}}(P,\cIdem(\cat{C}))
    \xrightarrow{\simeq}
    \Map_{\CAlg(\Cat{Cat})}(\iota(P),\iota(\cIdem(\cat{C})))
    \hookrightarrow
    \Map_{\CAlg(\Cat{Cat})}(\iota(P),\cat{C})
  \end{equation*}
  is an equivalence.
  So it suffices to observe that
  any symmetric monoidal functor \(\iota(P)\to\cat{C}\)
  factors through \(\iota(\cIdem(\cat{C}))\),
  which is clear as each element of~\(P\)
  determines a coidempotent.
\end{proof}

We then prove \cref{main}.
More precisely,
we prove the following reformulation:

\begin{theorem}\label{main_2}
  The functor given by \cref{i:frm}
  of \cref{49a2f304a6}
  is right adjoint to~\(\Shv\):
  \begin{equation*}
    \begin{tikzcd}
      \Cat{Frm}\ar[r,shift left,hook,"\Shv"]&
      \CAlg(\Cat{Pr})\rlap.\ar[l,shift left,"\cIdem"]
    \end{tikzcd}
  \end{equation*}
\end{theorem}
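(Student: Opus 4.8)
The plan is to realize \(\Shv(F)\) as a symmetric monoidal localization of the presheaf category \(\PShv(F)=\Fun(F^{\op},\Cat{S})\) and to combine the resulting description with the toy version \cref{adj}. First I would equip \(\PShv(F)\) with its cartesian symmetric monoidal structure. Since products of presheaves are computed pointwise and \(\Cat{S}\) is cartesian closed, this structure preserves colimits in each variable, and the Yoneda embedding \(F\to\PShv(F)\) is symmetric monoidal for the meet structure on~\(F\). Hence the cartesian structure agrees with the Day convolution of the cartesian (meet) structure on the semilattice~\(F\), and its universal property yields
\begin{equation*}
  \Map_{\CAlg(\Cat{Pr})}(\PShv(F),\cat{C})
  \simeq
  \Map_{\CAlg(\Cat{Cat})}(\iota(F),\cat{C})
  \simeq
  \Map_{\Cat{SLat}}(F,\cIdem(\cat{C})),
\end{equation*}
where the second equivalence is \cref{adj}. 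Under this identification a colimit-preserving symmetric monoidal functor \(G\colon\PShv(F)\to\cat{C}\) corresponds to the semilattice map \(\phi\colon F\to\cIdem(\cat{C})\) sending \(U\) to the coidempotent \([G(y_U)\to G(y_\top)=\unit]\), where I use that \(y_\top=\mathbf{1}\) is the unit.

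Next I would cut down from \(\PShv(F)\) to \(\Shv(F)\). Sheafification \(L\colon\PShv(F)\to\Shv(F)\) is left exact, hence preserves finite products and is symmetric monoidal for the cartesian structures; moreover the class \(W\) of \(L\)-equivalences is stable under products, so \(\Shv(F)\) is a symmetric monoidal localization of \(\PShv(F)\) and \(\Map_{\CAlg(\Cat{Pr})}(\Shv(F),\cat{C})\) is the full subspace of \(\Map_{\CAlg(\Cat{Pr})}(\PShv(F),\cat{C})\) spanned by those \(G\) that invert~\(W\). By \cref{ee1f8d640f} (applied with target~\(\Cat{S}\)), \(W\) is the strongly saturated class generated by the three families of maps
\begin{equation*}
  \emptyset\to y_\bot,
  \qquad
  y_U\amalg_{y_{U\wedge V}}y_V\to y_{U\vee V},
  \qquad
  \injlim_{U\in D}y_U\to y_{\bigvee D},
\end{equation*}
corresponding to conditions \cref{i:red}, \cref{i:exc}, and \cref{i:fil}, respectively. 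Since a colimit-preserving functor inverts a strongly saturated class if and only if it inverts a set of generators, \(G\) factors through \(\Shv(F)\) if and only if it inverts these three families.

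Finally I would translate these three conditions into properties of~\(\phi\), which is the crux. As \(G\) preserves colimits and the empty colimit, inverting the first map says \(G(y_\bot)\simeq\emptyset\), i.e.\ \(\phi(\bot)=\bot\). For the second, monoidality gives \(G(y_{U\wedge V})=G(y_U)\otimes G(y_V)\), so \(G\) applied to the Mayer--Vietoris pushout is exactly the pushout \cref{e:join} computing the binary join in \(\cIdem(\cat{C})\); thus inverting it says \(\phi(U\vee V)=\phi(U)\vee\phi(V)\). For the third, \(G\) preserves filtered colimits, and these compute directed joins in \(\cIdem(\cat{C})\) as in the proof of \cref{49a2f304a6}, so inverting it says \(\phi\) preserves directed joins. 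Hence \(G\) descends to \(\Shv(F)\) if and only if \(\phi\) is a frame morphism, giving
\begin{equation*}
  \Map_{\CAlg(\Cat{Pr})}(\Shv(F),\cat{C})
  \simeq
  \Map_{\Cat{Frm}}(F,\cIdem(\cat{C}))
\end{equation*}
naturally, which is the desired adjunction. The main obstacle is this last translation: it requires carefully matching the sheaf-theoretic generators of~\(W\) with the explicit meet/join constructions in \(\cIdem(\cat{C})\) from \cref{49a2f304a6}, and in particular using symmetric monoidality of~\(G\) to identify the image of the Mayer--Vietoris pushout with the pushout \cref{e:join}. The identification of the cartesian and Day structures, together with the monoidality of sheafification, is a secondary point that must also be verified but is standard.
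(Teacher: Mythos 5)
Your proposal is correct and takes essentially the same route as the paper's proof: both pass through \(\PShv(F)\) using the Day-convolution identification (\cref{day}) and the semilattice adjunction (\cref{adj}), realize \(\Map_{\CAlg(\Cat{Pr})}(\Shv(F),\cat{C})\) as a full subspace of \(\Map_{\CAlg(\Cat{Pr})}(\PShv(F),\cat{C})\) via symmetric monoidal localization, and then identify that subspace with the frame morphisms \(F\to\cIdem(\cat{C})\) by matching the three conditions of \cref{ee1f8d640f} against the explicit constructions of \(\bot\), binary joins, and directed joins in \(\cIdem(\cat{C})\) from the proof of \cref{49a2f304a6}. The single point of divergence is where \cref{ee1f8d640f} is applied: the paper uses it with target \(\cat{C}^{\op}\), checking via \cite[Proposition~4.1.7.4]{LurieHA} that the functor \(F\to\cat{C}\) corresponding to a frame morphism is a \(\cat{C}^{\op}\)-valued sheaf, whereas you apply it with target \(\Cat{S}\) to exhibit generators of the strongly saturated class of local equivalences and then check that \(G\) inverts those generators---a dualization of the same argument that requires the same three verifications.
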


We use the following notion in the proof:

\begin{definition}\label{0983741d9b}
  By~\cite[Remark~4.8.1.8 and Proposition~4.8.1.15]{LurieHA},
  the functor \({\PShv}\colon\Cat{Cat}\to\Cat{Pr}\)
  is refined to a symmetric monoidal functor.
  Hence
  for a symmetric monoidal \(\infty\)-category~\(\cat{C}\),
  there is
  a canonical presentable symmetric monoidal structure
  on \(\PShv(\cat{C})\).
  This is called
  the \emph{Day convolution} symmetric monoidal structure.
\end{definition}

\begin{lemma}\label{day}
  Let \(\cat{C}\) be an \(\infty\)-category with finite products.
  We consider the cartesian symmetric monoidal structure
  on~\(\cat{C}\).
  Then the Day convolution symmetric monoidal structure
  on \(\PShv(\cat{C})\) is again cartesian.
\end{lemma}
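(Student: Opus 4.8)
The plan is to identify the Day convolution structure with the cartesian structure by producing a symmetric monoidal equivalence between them, using the universal property of Day convolution encoded in \cref{0983741d9b}. Write $\PShv(\cat{C})^{\times}$ for the cartesian symmetric monoidal structure on $\PShv(\cat{C})$, and $\PShv(\cat{C})^{\mathrm{Day}}$ for the Day convolution structure. The first thing to note is that $\PShv(\cat{C})^{\times}$ is itself an object of $\CAlg(\Cat{Pr})$: as a presheaf $\infty$-topos, $\PShv(\cat{C})$ is cartesian closed, so the cartesian product preserves colimits separately in each variable. This is exactly what is needed to feed the cartesian structure into the universal property, which concerns symmetric monoidal colimit-preserving functors.

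Next I would build a comparison functor. The Yoneda embedding $\cat{C}\to\PShv(\cat{C})$ preserves finite products (it preserves all limits), and any finite-product-preserving functor is canonically strong symmetric monoidal for the cartesian structures on its source and target (see the theory of cartesian symmetric monoidal structures in \cite[Section~2.4.1]{LurieHA}); note in particular that the empty product is preserved, so the image of the unit is the terminal presheaf. Hence Yoneda refines to a symmetric monoidal functor $\cat{C}^{\times}\to\PShv(\cat{C})^{\times}$. By \cref{0983741d9b} the structure $\PShv(\cat{C})^{\mathrm{Day}}$ is obtained by applying the symmetric monoidal functor $\PShv$ to $\cat{C}^{\times}$; since $\PShv$ is a symmetric monoidal left adjoint (\cref{0983741d9b}, \cite[Section~4.8.1]{LurieHA}), it induces a left adjoint on commutative algebra objects, so that symmetric monoidal colimit-preserving functors out of $\PShv(\cat{C})^{\mathrm{Day}}$ correspond, by restriction along Yoneda, to symmetric monoidal functors out of $\cat{C}^{\times}$. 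Applying this to the functor just constructed yields a symmetric monoidal colimit-preserving functor
\begin{equation*}
  \Phi\colon\PShv(\cat{C})^{\mathrm{Day}}\to\PShv(\cat{C})^{\times}
\end{equation*}
whose restriction along Yoneda is again Yoneda.

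It then remains to check that $\Phi$ is an equivalence, which is the routine part: its underlying functor is a colimit-preserving endofunctor of $\PShv(\cat{C})$ restricting to the Yoneda embedding on representables, so by the universal property of presheaves \cite[Theorem~5.1.5.6]{LurieHTT} it is equivalent to the identity. A symmetric monoidal functor with invertible underlying functor is a symmetric monoidal equivalence, and thus $\Phi$ identifies $\PShv(\cat{C})^{\mathrm{Day}}$ with $\PShv(\cat{C})^{\times}$. The main obstacle is the middle step: one must ensure that the universal property produces a genuinely strong (not merely lax) symmetric monoidal $\Phi$, which is why it is essential that Yoneda be strong symmetric monoidal for the cartesian structures, i.e.\ that it preserve finite products, including the empty one.
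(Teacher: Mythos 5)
Your proof is correct and takes essentially the same approach as the paper: both arguments rest on the universal property of Day convolution from \cite[Section~4.8.1]{LurieHA} together with the same two verifications, namely that the Yoneda embedding \(\cat{C}\to\PShv(\cat{C})\) preserves finite products and that the cartesian product on \(\PShv(\cat{C})\) preserves colimits in each variable. The only difference is packaging: the paper invokes the characterization of Day convolution (Corollary~4.8.1.12 of \emph{Higher Algebra}) directly, whereas you unfold its uniqueness clause by hand, building the comparison functor \(\Phi\) and identifying it with the identity via \cite[Theorem~5.1.5.6]{LurieHTT}.
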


\begin{proof}
  We instead prove
  that the cartesian symmetric monoidal structure on \(\PShv(\cat{C})\)
  satisfies the characteristic properties of the Day convolution
  given in~\cite[Corollary~4.8.1.12]{LurieHA}.
  We need to prove
  that
  the Yoneda embedding
  \(\cat{C}\to\PShv(\cat{C})\) preserves finite products
  and
  that
  \({\X\times\X}\colon\PShv(\cat{C})\times\PShv(\cat{C})\to\PShv(\cat{C})\)
  preserves colimits in each variable,
  but both are clear.
\end{proof}

\begin{proof}[Proof of \cref{main_2}]
  As explained in \cref{topos},
  we can canonically identify
  \(\cIdem(\Shv(F))\) with~\(F\)
  for a frame~\(F\).
  We wish to show that this gives
  a unit for the desired adjunction.
  Concretely,
  for any frame~\(F\)
  and a presentably symmetric monoidal \(\infty\)-category~\(\cat{C}\),
  we need to show that
  the composite
  \begin{equation*}
    \Map_{\CAlg(\Cat{Pr})}(\Shv(F),\cat{C})
    \to
    \Map_{\Cat{Frm}}(\cIdem(\Shv(F)),\cIdem(\cat{C}))
    \xrightarrow{\simeq}
    \Map_{\Cat{Frm}}(F,\cIdem(\cat{C}))
  \end{equation*}
  is an equivalence.
  We show that the first map is an equivalence.

  We note that
  the cartesian symmetric monoidal structure on~\(\Shv(X)\)
  is a localization of that on~\(\PShv(F)\)
  in the sense of~\cite[Proposition~2.2.1.9]{LurieHA},
  since the sheafification functor preserves finite products.
  The symmetric monoidal functors
  \(\iota(F)\to\PShv(F)\to\Shv(F)\)
  induce the commutative diagram
\begin{equation*}
    \begin{tikzcd}
      \Map_{\CAlg(\Cat{Pr})}(\Shv(F),\cat{C})\ar[r]\ar[d,hook]&
      \Map_{\Cat{Frm}}(\cIdem(\Shv(F)),\cIdem(\cat{C}))\ar[dd,hook']\\
      \Map_{\CAlg(\Cat{Pr})}(\PShv(F),\cat{C})\ar[d,"\simeq"']&
      {}\\
      \Map_{\CAlg(\Cat{Cat})}(\iota(F),\cat{C})\ar[r,"\simeq"]&
      \Map_{\Cat{SLat}}(\cIdem(\iota(F)),\cIdem(\cat{C}))\rlap,
    \end{tikzcd}
  \end{equation*}
  where the hooked arrows are full inclusions
  and
  the bottom left vertical arrow and bottom horizontal arrow
  are equivalences by \cref{day} and \cref{adj}, respectively.
  We need to see that the top arrow,
  which is an inclusion of a full subspace,
  is an equivalence.
  It suffices to show that
  if
  a morphism \(f\colon F\to\cIdem(\cat{C})\)
  of semilattices,
  regarded as a point of the right bottom corner,
  is a frame morphism,
  it comes from the left top corner.
  
  We fix a frame morphism \(f\colon F\to\cIdem(\cat{C})\).
  This gives a symmetric monoidal functor \(\PShv(F)\to\cat{C}\)
  by left Kan extending
  \(F\to\cIdem(\cat{C})\to\cat{C}\).
  By~\cite[Proposition~4.1.7.4]{LurieHA},
  it suffices to show that
  its underlying functor factors through
  the sheafification functor \(\PShv(F)\to\Shv(F)\).
  We regard \(\PShv(F)\to\cat{C}\)
  as a presheaf on~\(F\) valued in~\(\cat{C}^{\op}\).
  With this identification,
  it suffices to show that
  the composite \(F\xrightarrow{f}\cIdem(\cat{C})\to\cat{C}\)
  is a sheaf.
  We prove this by checking the conditions
  listed in \cref{ee1f8d640f}.
  First we see~\cref{i:red}.
  The proof of \cref{49a2f304a6} shows that
  \(\cIdem(\cat{C})\to\cat{C}\) preserves the initial object
  and hence so does
  \(F\to\cat{C}\).
  Similarly,
  \cref{i:exc}
  follows from the construction
  of binary joins in \(\cIdem(\cat{C})\)
  done in the proof of \cref{49a2f304a6};
  cf.~\cref{e:0e6ad386}.
  Finally, \cref{i:fil}
  also follows from the proof of \cref{49a2f304a6},
  where we see that
  \(\cIdem(\cat{C})\to\cat{C}\)
  preserves filtered colimits.
\end{proof}

\begin{remark}\label{dcb0ed51e4}
  Similarly,
  by using the variant of \cref{ee1f8d640f}
  mentioned in \cref{f31b7b3f6f},
  we can prove that the functor
  \(\Cat{DLat}\to\CAlg(\Cat{Rex})\)
  given by
  \(D\mapsto\Shv(\Spec D)\)
  admits a right adjoint,
  which is given by restricting
  \(\cIdem\) (cf.~\cref{i:dlat} of \cref{49a2f304a6}),
  where \(\Cat{Rex}\) denotes
  the \(\infty\)-category
  of
  \(\infty\)-categories having finite colimits
  and
  functors preserving finite colimits.

Note that this version of
  the spectrum
  of a stably symmetric monoidal \(\infty\)-category
  is smaller than the Balmer spectrum;
  e.g.,
  the spectrum of
  the \(\infty\)-category
  of perfect complexes of abelian groups
  is~\(*\).
  In fact, this construction recovers
  the Pierce spectrum of a commutative ring
  (or in general, a connective ring spectrum);
  see~\cite[Example~7.37]{ttg-con}.
\end{remark}

\section{Categorified locales}\label{s:catloc}

\subsection{A categorical presentation}\label{ss:pre}

We recall the notion of (presentable) categorified locales
and prove \cref{pt},
which states that
the \(\infty\)-category of them
can be regarded as a certain full subcategory of
\(\Fun(\Delta^1,\CAlg(\Cat{Pr}))^{\op}\).

\begin{remark}\label{758bfc3136}
  Our categorical presentation theorem
  is less useful in practice
  than
  Lurie-type Tannaka duality reconstruction results for stacks.
  Rather, this theorem just shows
  how categorical the notion of a categorified locale is.
  It also provides us with a justification for the notion.
\end{remark}

As explained in \cref{ss:q},
a categorified locale \((X,\cat{C}_X)\)
consists of a locale~\(X\),
a stable cocomplete closed symmetric monoidal \(\infty\)-category~\(\cat{C}_X\),
and a large locale map \(\Sm(\cat{C}_X)\to X\).
A morphism \((Y,\cat{C}_Y)\to(X,\cat{C}_X)\)
between categorified locales
consists of
a large locale map \(f\colon Y\to X\)
and
a colimit-preserving symmetric monoidal functor \(F\colon\cat{C}_X\to\cat{C}_Y\)
that is compatible with~\(f\) in the sense that
the diagram
\begin{equation*}
  \begin{tikzcd}
    \Sm(\cat{C}_Y)\ar[r,"\Sm(F)"]\ar[d]&
    \Sm(\cat{C}_X)\ar[d]\\
    Y\ar[r,"f"]&
    X
  \end{tikzcd}
\end{equation*}
in \(\widehat{\Cat{Loc}}\)
commutes,
where the vertical maps are the structure morphisms.
We here formally define the \(\infty\)-category
of presentable categorified locales:

\begin{definition}\label{catloc}
We regard \(\Sm\colon\CAlg(\Cat{Pr})\to\Cat{Loc}^{\op}\)
  as a diagram \(\Delta^1\to\widehat{\Cat{Cat}}\).\footnote{The \(\infty\)-category
    \(\Cat{Pr}\),
    which is a~priori very large,
    is essentially large.
  }
  Let \(p\colon\cat{M}\to\Delta^1\) be the corresponding cartesian fibration.
  We define the \(\infty\)-category \(\Cat{CatLoc}\)
  of (unstable) \emph{categorified locales} to be 
  the opposite of the \(\infty\)-category
  \(\Fun_{/\Delta^1}(\Delta^1,\cat{M})\)
  of sections.
  We call it \emph{stable} when \(\cat{C}\)
  is stable
  and
  write \(\Cat{CatLoc}_{\st}\)
  for
  its full subcategory spanned by stable ones.
\end{definition}

\begin{remark}\label{d1bc60fcb8}
  The \(\infty\)-category of
  stable categorified locales
  \(\Cat{CatLoc}_{\st}\) can be identified
  with the undercategory of \(\Cat{CatLoc}\)
  with respect to \(({*},\Cat{Sp})\).
\end{remark}

\begin{remark}\label{presentability}
  The presentability assumption
  in the definition is not a real restriction;
  see \cref{8875b12b73}.
\end{remark}

We prove the following,
which immediately implies \cref{pt}:

\begin{theorem}\label{dfb9195053}
  There is a fully faithful functor
  \(
    \Cat{CatLoc}^{\op}\to\Fun(\Delta^1,\CAlg(\Cat{Pr}))
  \)
  objectwise given by
  associating to a categorified locale \((X,\cat{C})\)
  the morphism \(\Shv(X)\to\cat{C}\)
  corresponding to \(\Sm(\cat{C})\to X\)
  via the adjunction of \cref{main}.
\end{theorem}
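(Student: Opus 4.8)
The plan is to exhibit the asserted functor as a projection off a comma $\infty$-category and to reduce its fully faithfulness to that of $\Shv$. Recall first that $\Shv\colon\Cat{Loc}^{\op}\to\CAlg(\Cat{Pr})$ is itself fully faithful: by \cref{main_2} it is left adjoint to $\Sm=\cIdem$, and the proof of \cref{main_2} exhibits the unit $F\to\cIdem(\Shv(F))$ as the canonical identification of \cref{topos}, hence as an equivalence; a left adjoint whose unit is invertible is fully faithful.

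Next I would unwind \cref{catloc}. The fibration $p\colon\cat{M}\to\Delta^1$ encodes the adjunction $\Shv\dashv\Sm$, so its $\infty$-category of sections $\Fun_{/\Delta^1}(\Delta^1,\cat{M})$ is the associated comma (oriented fiber product) $\infty$-category; transposing along $\Shv\dashv\Sm$ identifies it with the pullback
\[
  \cat{S}\;\simeq\;\Cat{Loc}^{\op}\times_{\CAlg(\Cat{Pr})}\Fun(\Delta^1,\CAlg(\Cat{Pr}))
\]
formed along $\Shv$ and the source evaluation $\mathrm{ev}_0$. Concretely, an object of $\cat{S}$ is a locale $X$, an object $\cat{C}\in\CAlg(\Cat{Pr})$, and a morphism $\Shv(X)\to\cat{C}$, which under $\Shv\dashv\Sm$ is precisely the datum $\Sm(\cat{C})\to X$ of a categorified locale. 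The functor of the theorem is then (the opposite of) the projection $\pi\colon\cat{S}\to\Fun(\Delta^1,\CAlg(\Cat{Pr}))$ to the second factor, sending $(X,\cat{C},\Shv(X)\to\cat{C})$ to the arrow $\Shv(X)\to\cat{C}$, as required.

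Finally I would prove $\pi$ fully faithful. By construction $\pi$ sits in the pullback square
\[
  \begin{tikzcd}[column sep=large]
    \Cat{Loc}^{\op}\times_{\CAlg(\Cat{Pr})}\Fun(\Delta^1,\CAlg(\Cat{Pr}))\ar[r,"\pi"]\ar[d]&
    \Fun(\Delta^1,\CAlg(\Cat{Pr}))\ar[d,"\mathrm{ev}_0"]\\
    \Cat{Loc}^{\op}\ar[r,"\Shv"]&
    \CAlg(\Cat{Pr})\rlap,
  \end{tikzcd}
\]
so it is the base change of $\Shv$ along $\mathrm{ev}_0$. Fully faithful functors are stable under base change: on mapping spaces $\pi$ realizes the base change of the equivalence $\Map_{\Cat{Loc}^{\op}}(X,X')\xrightarrow{\simeq}\Map_{\CAlg(\Cat{Pr})}(\Shv(X),\Shv(X'))$ (fully faithfulness of $\Shv$) along the map induced by $\mathrm{ev}_0$, hence is itself an equivalence. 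Thus $\pi$ is fully faithful, and since passing to opposite $\infty$-categories preserves this, so is the functor of the theorem. The essential-image statement of \cref{pt} then drops out: an arrow $\cat{X}\to\cat{C}$ lies in the image exactly when $\cat{X}\simeq\Shv(X)$ for some locale $X$, necessarily unique by fully faithfulness of $\Shv$, and restricting to $\Cat{CatLoc}_{\st}$ additionally forces $\cat{C}$ to be stable.

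The step I expect to be the main obstacle is the identification of the section $\infty$-category with the comma $\infty$-category in the second paragraph: one must align the cartesian-fibration conventions of \cref{catloc}, the opposite in the definition of $\Cat{CatLoc}$, and the $\Shv\dashv\Sm$ transposition so that every variance matches. Once this bookkeeping is settled, the remainder is formal and rests solely on the fully faithfulness of $\Shv$.
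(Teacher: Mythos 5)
Your proposal is correct, and it rests on the same two pillars as the paper's own proof---the adjunction of \cref{main} and the fully faithfulness of \(\Shv\)---but the formal scaffolding differs. The paper never forms the comma category: it uses \cref{main} to regard \(\cat{M}\) as the \emph{cocartesian} fibration classified by \(\Shv\), unstraightens the square exhibiting a morphism \({\Shv}\to{\id}_{\CAlg(\Cat{Pr})}\) in \(\Fun(\Delta^1,\widehat{\Cat{Cat}})\) (with fully faithful components \(\Shv\) and \(\id\)) into a fully faithful map of cocartesian fibrations \(\cat{M}\to\Delta^1\times\CAlg(\Cat{Pr})\), and then applies \(\Fun_{/\Delta^1}(\Delta^1,\X)\), which preserves fully faithfulness. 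You instead straighten the section \(\infty\)-category itself, identifying \(\Fun_{/\Delta^1}(\Delta^1,\cat{M})\) with the oriented fiber product \(\Cat{Loc}^{\op}\times_{\CAlg(\Cat{Pr})}\Fun(\Delta^1,\CAlg(\Cat{Pr}))\) formed along \(\Shv\) and \(\mathrm{ev}_0\), and then invoke stability of fully faithful functors under (homotopy) pullback; both moves are standard and your mapping-space justification of the pullback-stability claim is sound. Your route has the merit of making explicit two points the paper leaves implicit: that \(\Shv\) is fully faithful because the unit \(F\to\cIdem(\Shv(F))\) is invertible (\cref{topos}), and the comma-category description of sections of a fibration over \(\Delta^1\); the paper's route avoids the latter identification (your self-identified main obstacle) entirely by staying at the fibration level, where the adjunction is used exactly once, to pass from the cartesian to the cocartesian straightening of \(\cat{M}\). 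Finally, the residual variance issue you flag is not a gap on your side: the functor naturally has source \(\Cat{CatLoc}^{\op}\), and this \(\op\)-discrepancy is already present in the paper, whose statements of \cref{dfb9195053} and \cref{pt} differ by exactly this opposite.
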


\begin{remark}\label{7c98ee819e}
  Let \(k\) be a field.
  In light of \cref{sm_shv_k},
  we can show by
  a similar argument that
  the \(\infty\)-category of categorified locales
  under \(({*},\D(k))\)
  is fully faithfully embedded into
  \(\Fun(\Delta^1,\CAlg_{\D(k)}(\Cat{Pr}))\).
\end{remark}

\begin{proof}
By \cref{main},
  the cartesian fibration~\(\cat{M}\) in 
  \cref{catloc}
  can be regarded as the cocartesian fibration
  corresponding to
  \({\Shv}\in\Fun(\Delta^1,\widehat{\Cat{Cat}})\).
  We regard
  the diagram
  \begin{equation*}
    \begin{tikzcd}
      \Cat{Loc}^{\op}\ar[r,"\Shv",hook]\ar[d,"\Shv"',hook]&
      \CAlg(\Cat{Pr})\ar[d,equal]\\
      \CAlg(\Cat{Pr})\ar[r,equal]&
      \CAlg(\Cat{Pr})
    \end{tikzcd}
  \end{equation*}
  as the morphism
  \({\Shv}\to{\id}_{\CAlg(\Cat{Pr})}\)
  in \(\Fun(\Delta^1,\widehat{\Cat{Cat}})\).
  By unstraightening this,
  we get a fully faithful functor
  \(\cat{M}\to\Delta^1\times\CAlg(\Cat{Pr})\)
  between cocartesian fibrations
  over~\(\Delta^1\)
  preserving cocartesian edges.
  By taking the section \(\infty\)-categories,
  we get the desired functor.
\end{proof}

We record the following corollary:

\begin{corollary}\label{xhurqm}
The \(\infty\)-category~\(\Cat{CatLoc}\)
  has colimits.
  Concretely,
  we have
  \begin{equation*}
    \injlim_{I\in\cat{I}}
    (X_{I},\cat{C}_{I})
    \simeq
    \biggl(X,\projlim_{I\in\cat{I}}\cat{C}_{I}\biggr)
  \end{equation*}
  for a diagram
  \(\cat{I}\to\Cat{CatLoc}\)
  where \(X=\injlim_{I\in\cat{I}}X_{I}\)
  in the category of locales.
\end{corollary}

\begin{proof}
  By \cref{dfb9195053},
  we see that it has colimits.
  It also gives us the formula
  \begin{equation*}
    \injlim_{I\in\cat{I}}
    (X_{I},\cat{C}_{I})
    \simeq
    \biggl(
    \Sm\biggl(\projlim_{I\in\cat{I}}\Shv(X_{I})\biggr),
    \projlim_{I\in\cat{I}}\cat{C}_{I}
    \biggr)
  \end{equation*}
  for a diagram
  \(\cat{I}\to\Cat{CatLoc}\).
  So we have to identify
  the first component of the right-hand side
  with~\(X=\injlim_{I\in\cat{I}}X_{I}\).
  According to~\cite[Proposition~6.3.2.3]{LurieHTT},
  the limit
  \(\projlim_{I\in\cat{I}}\Shv(X_{I})\)
  is the colimit of \(\infty\)-toposes~\(\Shv(X_{I})\).
Hence the desired claim follows from \cref{topos}.
\end{proof}

\bibliographystyle{plain}
\let\SS\oldSS  \newcommand{\yyyy}[1]{}

\end{document}